\documentclass[reqno,a4paper,10pt]{amsart} 
\usepackage{amssymb} 
\usepackage{latexsym} 
\usepackage{amsmath} 
\usepackage{esint} 
\usepackage[utf8x]{inputenc}
\usepackage{todonotes}
\usepackage{MnSymbol}
\usepackage[colorlinks,pdfpagelabels,pdfstartview = FitH,bookmarksopen = true,bookmarksnumbered = true,linkcolor = blue,plainpages = false,hypertexnames = false,citecolor = red] {hyperref} 
\allowdisplaybreaks

\newtheorem{theorem}{Theorem}[section]

%[section] 
\newtheorem{lemma}[theorem]{Lemma}

%[section] 

%[section] 
\theoremstyle{definition} 
\newtheorem{definition}[theorem]{Definition}

%[section] 

% \theoremstyle{definition}
%\newtheorem{definition}[theorem]{Definition}
%\numberwithin{theorem}{section} \numberwithin{equation}{section}

\numberwithin{equation}{section}
\newcommand{\R}{{\mathbb R}}

\newcommand{\U}{{\mathcal{U}}}

\newcommand{\Si}{\ensuremath{\mathcal{S}}} 
\newcommand{\Bo}{\ensuremath{\mathcal{B}}} 
\newcommand{\T}{\ensuremath{\mathcal{T}}}

\newcommand{\grad}{\nabla}

\newcommand{\de}{\dif}
\newcommand{\dx}{\de x} 
\newcommand{\dt}{\de t} 
 
\newcommand{\dxdt}{\dx\dt}
\DeclareMathOperator*{\essliminf}{ess\,lim\,inf}   \DeclareMathOperator*{\capacity}{cap} \DeclareMathOperator*{\supp}{supp}

\newcommand{\dif}[0]{\ensuremath{\,\mathrm{d}}} 
\newcommand{\norm}[1]{\ensuremath{\Vert #1 \Vert}} 
 
\newcommand{\inprod}[1]{\ensuremath{\langle #1 \rangle}} 
 
\newcommand{\dinprod}[1]{\ensuremath{\langle\langle #1 \rangle\rangle}} 
 
\newcommand{\abs}[1]{\ensuremath{\vert #1 \vert}} 
 
\newcommand{\spcdot}{\,\cdot\,} %Spaced center dot
\newcommand{\lscss}{semicontinuous supersolution}

\makeatletter
\providecommand\@dotsep{5}
\renewcommand{\listoftodos}[1][\@todonotes@todolistname]{%
  \@starttoc{tdo}{#1}}
\makeatother
\hypersetup{
	linktoc=page
}

\begin{document} 
\title[A comparison principle for the PME]{A comparison principle for
  the porous medium equation and its consequences}

%\author[Avelin]{Benny Avelin}
\address{Benny Avelin,
Aalto University, 
Institute of Mathematics,
P.O. Box 11100, 
FI-00076 Aalto, 
Finland}
\address{Benny Avelin,
Department of Mathematics, 
Uppsala University,
S-751 06 Uppsala, 
Sweden} 
\email{benny.avelin@math.uu.se}

\address{Teemu Lukkari\\
Department of Mathematics and Statistics, University of Jyväskylä\\
P.O. Box 35, 40014 Jyväskylä, Finland} \email{teemu.j.lukkari@jyu.fi} 
\author[Avelin]{Benny Avelin}
\author[Lukkari]{Teemu Lukkari} 
\subjclass[2010]{Primary 35K55; Secondary: 31C15, 35K86} \thanks{The first author was partially supported by Academy of Finland, project \#259224, and by the Swedish Research Council, \#637-2014-6822.} \thanks{A part of the research reported in this work was done during the authors' stay at the Institute Mittag-Leffler (Djursholm, Sweden).}

%\maketitle 
\begin{abstract}
	
  % \noindent 
  We prove a comparison principle for the porous medium equation in
  more general open sets in $\R^{n+1}$ than space-time cylinders. We
  apply this result in two related contexts: we establish a connection
  between a potential theoretic notion of the obstacle problem and a
  notion based on a variational inequality. We also prove the basic
  properties of the PME capacity, in particular that there exists a
  capacitary extremal which gives the capacity for compact sets.
\end{abstract}

\maketitle

\section{Introduction} 

We study the porous medium equation (PME for short)
\begin{equation}\label{eq:pme-intro}
	\frac{ 
	\partial u}{ 
	\partial t}-\Delta u^m=0\,, 
\end{equation}
where $m>1$. This equation is an important prototype of a nonlinear
parabolic equation. The equation is degenerate, meaning that the
modulus of ellipticity vanishes when the solution is zero.  The name
stems from modeling the flow of a gas in a porous medium: the
continuity equation, Darcy's law, and an equation of state for the gas
lead to \eqref{eq:pme-intro} for the density of the gas, after scaling
out various physical constants.  For more information about
this equation, including numerous further references, we refer to the
monographs \cite{DaskalopoulosKenig} and \cite{VazquezBook}.

The comparison principle is a fundamental tool in the theory of
elliptic and parabolic equations. In particular, it can be used to
define a class of supersolutions which is the counterpart for
superharmonic functions in classical potential theory: we call a
function a \lscss{}, if it satisfies the comparison principle with
respect to continuous solutions.  The definition is due to
F. Riesz~\cite{Riesz}, and it makes the development of a nonlinear
potential theory feasible.

The comparison principle for parabolic equations is usually formulated
for space-time cylinders, meaning sets of the form
$\Omega_T=\Omega\times (0,T)$.  The boundary values are then taken
over \emph{the parabolic boundary}, where only the initial and lateral
boundaries are taken into account.  However, one often encounters
situations where one would like to apply the comparison principle in
sets which are not space-time cylinders. Thus our main objective is to
establish a comparison principle for the PME in more general open sets
in $\R^{n+1}$. Such a result is occasionally called the elliptic
comparison principle, in reference to the fact that the time variable
no longer has a special role. Moreover, the elliptic comparison principle can be used to develop the Perron method in general space-time domains, see \cite{BBGP,KL}.
We also present two applications where
such a comparison principle is indispensable.

For the heat equation, when $m=1$, one may add constants to solutions.
A comparison principle for general open sets then follows from the
space-time cylinder case by a straightforward exhaustion argument. For
the PME, there is a comparison principle over cylindrical domains, but
adding constants is no longer possible. Our idea for circumventing
this difficulty is to multiply one of the functions being compared by
a constant close to one. The modified function is no longer a
solution, but it still satisfies the PME with an error-term on
the right hand side. The error-term vanishes as the
multiplicative constant tends to one. The comparison principle for the
original functions then follows by the usual duality proof, modified
to account for the error-term. Our argument yields a comparison
principle for open sets of the form $\Omega_T\setminus K$, where $K$
is a compact set.

As the first application, we consider the obstacle problem. Roughly
speaking, this amounts to finding a solution to a PDE subject to the
constraint that the solution stays above a given function, the
obstacle. Here we use a potential theoretic method for solving the
problem: we define the solution to the obstacle problem to be the
infimum of all supersolutions lying above the obstacle (réduite). For smooth enough obstacles the réduite is the smallest supersolution above the obstacle. The concept of réduite is standard in classical potential theory, and it has been
utilized in a nonlinear parabolic context in \cite{LP}.  Existence and
uniqueness follow in a straightforward manner, at least for continuous
obstacles. However, the relation between the smallest supersolution
and the variational solutions to obstacle problems constructed in
\cite{BLS} is not obvious. In this direction, we prove that the
smallest supersolution is also a variational solution for sufficiently
smooth obstacles. This follows from two facts. First, we prove that
the smallest supersolution can always be approximated by variational
solutions. Second, the notion of variational solution is stable with
respect to the convergence of the obstacles in certain norms, see
\cite{BLS}. The converse of this, i.e. whether a variational solution
agrees with the smallest supersolution, remains a very interesting
open problem.

The second application is a notion of parabolic capacity for the PME.
This concept is defined via a measure data problem, as in \cite{KKKP}
for the parabolic $p$-Laplacian.  See also \cite{W, WB} and the references therein for the
capacity for the heat equation.  We prove the basic properties of the
capacity related to the PME, such as countable subadditivity and the
existence of the capacitary extremal of a compact set. 
Our comparison principle plays a key role in the latter argument.

The paper is organized as follows. In Section \ref{sec:weaksuper}, we
recall the necessary background material, in particular various
notions of supersolutions. Section \ref{sec:comp} contains the proof of
the comparison principle, and Section \ref{sec:ost} is concerned with
the obstacle problem. Finally, the basic properties of capacity are
proved in Section \ref{sec:cap}.

\section{Weak supersolutions and semicontinuous
  supersolutions} \label{sec:weaksuper}

Let $\Omega$ be an open and bounded subset of $\R^n$, and let $0<t_1<t_2<T$. We use the notation $\Omega_T=\Omega\times(0,T)$ and $U_{t_1,t_2}=U\times (t_1,t_2)$, where $U\subset\Omega$ is open. The parabolic boundary $ 
\partial_p U_{t_1,t_2}$ of a space-time cylinder $U_{t_1,t_2}$ consists of the initial and lateral boundaries, i.e. 
\begin{displaymath}
	\partial_p U_{t_1,t_2}=(\overline{U}\times\{t_1\}) \cup ( 
	\partial U\times [t_1,t_2])\,. 
\end{displaymath}
The notation $U_{t_1,t_2}\Subset\Omega_T$ means that the closure $\overline{U_{t_1,t_2}}$ is compact and $\overline{U_{t_1,t_2}}\subset\Omega_T$.

We use $H^1(\Omega)$ to denote the usual Sobolev space, the space of
functions $u$ in $L^2(\Omega)$ such that the weak gradient exists and
also belongs to $L^2(\Omega)$. The norm of $H^1(\Omega)$ is defined by
\begin{displaymath}
	\norm{u}_{H^1(\Omega)}^2=\norm{u}_{L^2(\Omega)}^2+\norm{\nabla u}_{L^2(\Omega)}^2\,. 
\end{displaymath}
The Sobolev space with zero boundary values, denoted by $H^{1}_0(\Omega)$, is the completion of $C^{\infty}_0(\Omega)$ with respect to the norm of $H^1(\Omega)$.

The parabolic Sobolev space $L^2(0,T;H^1(\Omega))$ consists of measurable functions $u:\Omega_T\to[-\infty,\infty]$ such that $x\mapsto u(x,t)$ belongs to $H^1(\Omega)$ for almost all $t\in(0,T)$, and 
\begin{displaymath}
	\int_{\Omega_T}\abs{u}^2+\abs{\nabla u}^2\dx\dt<\infty\,. 
\end{displaymath}
The definition of $L^2(0,T;H^{1}_0(\Omega))$ is identical, apart from the requirement that $x\mapsto u(x,t)$ belongs to $H^{1}_0(\Omega)$. We say that $u$ belongs to $L^2_{loc}(0,T;H^{1}_{loc}(\Omega))$ if $u\in L^2(t_1,t_2;H^1(U))$ for all $U_{t_1,t_2}\Subset\Omega_T$.

Supersolutions to the porous medium equation are defined in the weak sense in the parabolic Sobolev space. 
\begin{definition}
	\label{def:local-weak} A nonnegative function $u:\Omega_T\to\R$ is a \emph{weak supersolution} of the equation 
	\begin{equation}
		\label{eq:pme} \frac{ 
		\partial u}{ 
		\partial t}-\Delta u^m=0 
	\end{equation}
	in $\Omega_T$, if $u^m\in L^2_{loc}(0,T;H^{1}_{loc}(\Omega))$ and 
	\begin{equation*} %\label{eq:weak-pme} 
		\int_{\Omega_T}-u\frac{ 
		\partial\varphi}{ 
		\partial t} +\nabla u^m\spcdot\nabla\varphi\dx\dt\geq 0 \,,
	\end{equation*}
	for all positive, smooth test functions $\varphi$ compactly supported in $\Omega_T$. The definition of \emph{weak subsolutions} is similar; the inequality is simply reversed. \emph{Weak solutions} are defined as functions that are both super- and subsolutions. 
\end{definition}

Weak solutions are locally H\"older continuous, after a possible redefinition on a set of measure zero. See \cite{DK}, \cite{DaskalopoulosKenig}, \cite{DiBeFried}, \cite{VazquezBook}, or \cite{Kiinalaiset}.

We have also the following class of supersolutions. 
\begin{definition}
	\label{def:viscosity-supersols} A function $u:\Omega_T\to [0,\infty]$ is a \emph{\lscss }, if 
	\begin{enumerate}
		\item $u$ is lower semicontinuous, 
		\item $u$ is finite in a dense subset of $\Omega_T$, and 
		\item the following parabolic comparison principle holds: Let $U_{t_1,t_2}\Subset\Omega$, and let $h$ be a solution to \eqref{eq:pme} which is continuous in $\overline{U_{t_1,t_2}}$. Then, if $h\leq u$ on $ 
		\partial_p U_{t_1,t_2}$, $h\leq u$ also in $U_{t_1,t_2}$. 
	\end{enumerate}
\end{definition}
Note that a \lscss{} is defined in every point.  Every weak
supersolution is a \lscss{} provided that a proper pointwise
representative is chosen. This is a consequence of the following
lemma.
\begin{lemma}[\cite{AL}] \label{thm:lsc:intro} Let $u$ be a
  nonnegative weak supersolution to the porous medium equation in
  $\Omega\times(t_1,t_2)$. Then $u$ has a lower semicontinuous
  representative.
\end{lemma}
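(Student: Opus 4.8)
\emph{Proposed proof.} The plan is to exhibit the lower semicontinuous representative explicitly, as the essential lower limit of $u$. For $(x,t)\in\Omega\times(t_1,t_2)$ and small $r>0$ write $Q_r(x,t)=B_r(x)\times(t-r^2,t+r^2)$ and put
\[
u_*(x,t):=\essliminf_{(y,s)\to(x,t)}u(y,s)=\lim_{r\to0}\ \essinf_{Q_r(x,t)}u .
\]
Two properties of $u_*$ are elementary and use nothing about the equation. First, $u_*$ is lower semicontinuous: if $\lambda<u_*(x_0,t_0)$, pick $r$ with $\essinf_{Q_r(x_0,t_0)}u>\lambda$; then $Q_{r/2}(x,t)\subset Q_r(x_0,t_0)$ once $(x,t)$ is close enough to $(x_0,t_0)$, and so $u_*(x,t)\ge\essinf_{Q_{r/2}(x,t)}u>\lambda$ there. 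Second, at every Lebesgue point of $u$ we have $u_*(x,t)\le u(x,t)$, since $\essinf_{Q_r(x,t)}u\le\abs{Q_r(x,t)}^{-1}\int_{Q_r(x,t)}u\,\dxdt\to u(x,t)$; and almost every point is a Lebesgue point ($u\in L^1_{loc}$ because $u^m\in L^2_{loc}$). Hence the lemma reduces to the one-sided bound $u_*\ge u$ almost everywhere, i.e. to the statement that a nonnegative weak supersolution is, in a measure-theoretic sense, lower semicontinuous from the surrounding space-time.

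For that bound I would run a De~Giorgi iteration adapted to the porous medium structure (the strategy mirrors the one used for the $p$-parabolic equation). First derive a family of local energy estimates for the truncations of $u$ from below: for a level $\ell>0$ and a space-time cutoff $\zeta$, test the weak inequality with $\varphi=\zeta^2(\ell^m-u^m)_+$ — after a Steklov regularisation in time so that the time derivative makes sense, followed by an integration by parts — to control $\esssup_t\int\zeta^2\Phi_\ell(u)\,\dx+\int\!\!\int\zeta^2\abs{\grad(\ell^m-u^m)_+}^2\,\dxdt$ by $\int\!\!\int(\abs{\grad\zeta}^2+\abs{\partial_t\zeta})\bigl((\ell^m-u^m)_+\bigr)^2\,\dxdt$, where $\Phi_\ell$ is the primitive produced by the parabolic term, comparable to $(\ell-u)_+^2$ on $0\le u\le\ell$. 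Feeding these into the usual iteration over a shrinking sequence of (intrinsically scaled) cylinders with an increasing sequence of levels yields: for each $\varepsilon>0$ there is $\nu_\varepsilon\in(0,1)$ with
\[
\bigl|\{u<\ell\}\cap Q_r(x_0,t_0)\bigr|\le\nu_\varepsilon\,\abs{Q_r(x_0,t_0)}\quad\Longrightarrow\quad u\ge(1-\varepsilon)\,\ell\ \text{ a.e. in }Q_{r/2}(x_0,t_0).
\]
The conclusion is then measure-theoretic: at a Lebesgue point $(x_0,t_0)$ of $u$ and for any $\ell<u(x_0,t_0)$, Chebyshev's inequality gives $\bigl|\{u<\ell\}\cap Q_r(x_0,t_0)\bigr|\le(u(x_0,t_0)-\ell)^{-1}\int_{Q_r(x_0,t_0)}\abs{u-u(x_0,t_0)}\,\dxdt=o(\abs{Q_r(x_0,t_0)})$, so the implication above applies for all small $r$ and gives $u_*(x_0,t_0)\ge(1-\varepsilon)\ell$; letting $\varepsilon\to0$ and $\ell\uparrow u(x_0,t_0)$ gives $u_*(x_0,t_0)\ge u(x_0,t_0)$. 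Together with the reverse inequality at Lebesgue points, $u_*=u$ a.e., and $u_*$ is the desired representative.

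The main obstacle is the degeneracy of the equation where $u$ is close to zero: there the modulus of ellipticity $m u^{m-1}$ is small, which forces the whole iteration to be carried out in terms of $u^m$ (the truncations of $u^m$) rather than of $u$, and one has to control the passage between bounds for $(\ell^m-u^m)_+$ and bounds for $(\ell-u)_+$ using the elementary inequalities comparing $a-b$ with $a^m-b^m$ for $0\le b\le a$, with constants depending on $m$. Relatedly, the cylinders on which the estimates close are intrinsically scaled, of the form $B_r(x_0)\times(t_0-c\,\ell^{1-m}r^2,t_0)$, and this geometry must be kept consistently through the iteration (it is harmless in the Lebesgue point step, which only needs a fixed-shape family shrinking to the point). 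Finally, the parabolic term — the Steklov averaging, the integration by parts, and the control of the term on the initial time slice — has to be handled carefully, since a priori $\partial_t u$ is only a distribution and $u$ need not be continuous in time; once the measure-to-pointwise estimate is available, nothing further about the equation is needed.
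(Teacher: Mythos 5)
You should first note that the paper does not prove this lemma at all: it is imported verbatim from the reference \cite{AL}, and your outline in fact follows the same strategy as that reference (and Kuusi's earlier scheme for $p$-parabolic equations): take $u_*=\essliminf u$ as the candidate representative, dispose of the easy facts (lower semicontinuity of $u_*$, and $u_*\le u$ at Lebesgue points), and reduce everything to a measure-to-point estimate for supersolutions obtained from Caccioppoli estimates for the lower truncations and a De Giorgi iteration in intrinsically scaled cylinders. The elementary reductions in your first two paragraphs are correct as written (note $u\in L^{2m}_{loc}\subset L^1_{loc}$ is indeed available), and your final Lebesgue-point argument is the right way to conclude once the measure-to-point implication is in hand.

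The weakness is that the decisive step is asserted rather than proved: the implication ``$|\{u<\ell\}\cap Q|\le\nu_\varepsilon|Q|\Rightarrow u\ge(1-\varepsilon)\ell$ a.e.\ in a subcylinder'' is exactly the content that makes \cite{AL} a paper of its own, and your sketch of it contains imprecisions that matter quantitatively. Testing with $\zeta^2(\ell^m-u^m)_+$ (after Steklov averaging) produces the primitive $\Phi_\ell(u)=\int_u^\ell(\ell^m-s^m)\,\ds$, which is comparable to $\ell^{m-1}(\ell-u)_+^2$, not to $(\ell-u)_+^2$; this missing factor $\ell^{m-1}$ is precisely what forces the time-length $\theta\sim\ell^{1-m}r^2$ of the cylinders, so the critical mass level $\nu_\varepsilon$ and the whole iteration must be run in that intrinsic geometry, and the implication cannot be stated for the symmetric cylinders $B_r\times(t-r^2,t+r^2)$ without adjustment. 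Moreover the De Giorgi lemma for supersolutions naturally yields the lower bound on a backward (top) subcylinder, so the centered-cylinder formulation you use needs a covering or a one-sided reformulation; this is harmless but must be tracked, since the Lebesgue differentiation argument then has to be carried out along a family of fixed eccentricity depending on the chosen level $\ell$. In short: the route is the right one and coincides with the cited proof, but as submitted it is an outline whose core estimate --- the PME-specific energy estimate plus intrinsic De Giorgi iteration --- is not actually established.
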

In the other direction, a \emph{bounded} \lscss{} is also a weak
supersolution, as shown in \cite{KinnunenLindqvist2}. If unbounded
functions are allowed, then the class of \lscss s is strictly larger,
since the Barenblatt solution is a \lscss{}, but is not a weak
supersolution, see \cite{KinnunenLindqvist2}.

\begin{lemma}
	[\cite{KinnunenLindqvist2}] \label{lem:caccioppoli} Let $u$ be a weak supersolution such that $\abs{u}\leq M<\infty$. Then 
	\begin{displaymath}
		\iint_{\Omega_T}\eta^2\abs{\nabla u^m }^2 \dxdt \leq 16M^{2m}T\int_{\Omega}\abs{\nabla\eta}^2\dx +6M^{m+1}\int_{\Omega}\eta^2 \dx \,,
	\end{displaymath}
	for all nonnegative functions $\eta\in C_0^\infty(\Omega)$. 
\end{lemma}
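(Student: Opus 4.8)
The plan is to obtain the inequality by inserting into the weak supersolution inequality of Definition~\ref{def:local-weak} a test function of the form
\[
\varphi=\eta^{2}\,\tau_{\varepsilon}\,\bigl(2M^{m}-u^{m}\bigr),
\]
where $\tau_{\varepsilon}\in C^{\infty}_{0}(0,T)$ equals $1$ on $(2\varepsilon,T-2\varepsilon)$, vanishes outside $(\varepsilon,T-\varepsilon)$, is nondecreasing on $(\varepsilon,2\varepsilon)$ and nonincreasing on $(T-2\varepsilon,T-\varepsilon)$; thus $\tau_{\varepsilon}'\ge 0$ near $t=0$, $\tau_{\varepsilon}'\le 0$ near $t=T$, and the negative part of $\tau_{\varepsilon}'$ integrates to $1$ in $t$. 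Since $0\le u\le M$, the factor $2M^{m}-u^{m}$ lies between $M^{m}$ and $2M^{m}$, so $\varphi$ is a bounded nonnegative function in $L^{2}(0,T;H^{1}_{0}(\Omega))$ with compact support in time. However $\varphi$ has no time regularity, so it cannot be plugged in directly: the first step is to replace $u$ and $u^{m}$ by their Steklov averages in time, for which the weak formulation yields a genuine pointwise-in-$t$ inequality admitting the corresponding regularized $\varphi$, and then to pass to the limit in the averaging parameter. This regularization — in particular identifying the limit of the term pairing $\partial_{t}u$ with $u^{m}$, which is what produces $\tfrac{m}{m+1}\partial_{t}u^{m+1}$ — is the one genuinely delicate point, precisely because $u$ is merely a weak supersolution; the bound $\abs{u}\le M$ is what makes the passage to the limit harmless. (The required Steklov-averaging machinery is the one used in \cite{KinnunenLindqvist2}.)

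Granting this, the computation runs as follows. The parabolic term expands as
\[
-u\,\partial_{t}\varphi=-u\,\eta^{2}\tau_{\varepsilon}'(2M^{m}-u^{m})+\frac{m}{m+1}\,\eta^{2}\tau_{\varepsilon}\,\partial_{t}\bigl(u^{m+1}\bigr),
\]
and integrating the last summand by parts in $t$ replaces it by $-\tfrac{m}{m+1}\eta^{2}\tau_{\varepsilon}'u^{m+1}$; there are no boundary contributions, in space because $\eta\in C^{\infty}_{0}(\Omega)$, in time because $\tau_{\varepsilon}$ vanishes near $t=0$ and $t=T$. The diffusion term is
\[
\nabla u^{m}\spcdot\nabla\varphi=-\eta^{2}\tau_{\varepsilon}\abs{\nabla u^{m}}^{2}+2\eta\,\tau_{\varepsilon}\,(2M^{m}-u^{m})\,\nabla u^{m}\spcdot\nabla\eta.
\]
Moving the term $\iint\eta^{2}\tau_{\varepsilon}\abs{\nabla u^{m}}^{2}$ to the left-hand side and bounding the surviving cross term by Young's inequality,
\[
2\eta(2M^{m}-u^{m})\,\nabla u^{m}\spcdot\nabla\eta\le\tfrac12\,\eta^{2}\abs{\nabla u^{m}}^{2}+2\,(2M^{m}-u^{m})^{2}\abs{\nabla\eta}^{2},
\]
absorbs half of the left-hand side, so that after multiplying by $2$ one arrives at
\begin{multline*}
\iint_{\Omega_{T}}\eta^{2}\tau_{\varepsilon}\abs{\nabla u^{m}}^{2}\,\dxdt\le 2\iint_{\Omega_{T}}\eta^{2}\tau_{\varepsilon}'\Bigl(-u(2M^{m}-u^{m})-\tfrac{m}{m+1}u^{m+1}\Bigr)\,\dxdt\\
+4\iint_{\Omega_{T}}\tau_{\varepsilon}\,(2M^{m}-u^{m})^{2}\,\abs{\nabla\eta}^{2}\,\dxdt .
\end{multline*}

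It remains to estimate the two terms on the right using only $0\le u\le M$. For the last one, $(2M^{m}-u^{m})^{2}\le 4M^{2m}$ and $\iint_{\Omega_{T}}\tau_{\varepsilon}\abs{\nabla\eta}^{2}\,\dxdt\le T\int_{\Omega}\abs{\nabla\eta}^{2}\,\dx$, giving the contribution $16M^{2m}T\int_{\Omega}\abs{\nabla\eta}^{2}\,\dx$. For the first, observe that $-u(2M^{m}-u^{m})-\tfrac{m}{m+1}u^{m+1}=u\bigl(\tfrac{1}{m+1}u^{m}-2M^{m}\bigr)\le 0$ for $0\le u\le M$; hence its product with $\tau_{\varepsilon}'$ is nonpositive wherever $\tau_{\varepsilon}'\ge 0$, i.e.\ near $t=0$, and that part may simply be discarded, leaving only the contribution near $t=T$, where the negative part of $\tau_{\varepsilon}'$ integrates to $1$ in time and $\bigl|-u(2M^{m}-u^{m})-\tfrac{m}{m+1}u^{m+1}\bigr|\le 2M^{m+1}$; this bounds the first term by $4M^{m+1}\int_{\Omega}\eta^{2}\,\dx\le 6M^{m+1}\int_{\Omega}\eta^{2}\,\dx$. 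Letting $\varepsilon\to0$, the left-hand side tends to $\iint_{\Omega_{T}}\eta^{2}\abs{\nabla u^{m}}^{2}\,\dxdt$ by monotone convergence while the right-hand side is already independent of $\varepsilon$, which is the assertion. As indicated, the only real obstacle is making the time-dependent test function legitimate; once the Steklov averaging is in place, what remains is the elementary computation above.
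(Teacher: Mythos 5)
Your argument is correct and follows essentially the same route as the paper's source: the paper does not prove Lemma \ref{lem:caccioppoli} itself but quotes it from \cite{KinnunenLindqvist2}, where the same Caccioppoli-type test function $\eta^2(2M^m-u^m)$ with a time cutoff and Steklov averaging is used. Your computation reproduces the stated constants (in fact with $4M^{m+1}$ in place of $6M^{m+1}$), and the single point you defer --- justifying $u\,\partial_t u^m=\tfrac{m}{m+1}\partial_t (u^{m+1})$ for a merely bounded weak supersolution via Steklov averages --- is precisely the standard regularization carried out in \cite{KinnunenLindqvist2}, which you correctly identify as the only delicate step.
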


An application of the Riesz representation theorem shows that for each weak supersolution $u$, there exists a positive Radon measure $\mu_u$ such that 
\begin{displaymath}
	\iint_{\Omega_\infty}-u\frac{\partial\varphi}{\partial t} +\nabla u^m\spcdot\nabla\varphi\dx\dt=\int_{\Omega_\infty}\varphi \de \mu_{u}
\end{displaymath}
for all smooth compactly supported functions $\varphi$.  This is the
\emph{Riesz measure} of $u$. The integrals on the left hand side do
not depend on the particular point-wise representative of a
supersolution. Thus a weak supersolution $u$ and its lower
semicontinuous regularization $\widehat{u}$ have the same Riesz
measures.
\begin{lemma}
	\label{ComparisonPrinciple} If $u$ and $v$ are weak supersolutions in $\Omega_\infty$, $u,v = 0$ on $
	\partial_p \Omega_\infty$, $u^m, v^m\in
        L^2(0,\infty;H^1_0(\Omega))$, and $\mu_v \leq \mu_u$, then $v
        \leq u$ a.e. in $\Omega_\infty$.
\end{lemma}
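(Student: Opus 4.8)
The plan is to establish the comparison $v \le u$ a.e.\ in $\Omega_\infty$ via the classical duality (adjoint) argument adapted to the PME. The starting point is to subtract the two weak formulations: for every smooth compactly supported test function $\varphi$ we have
\begin{displaymath}
  \iint_{\Omega_\infty} -(v-u)\frac{\partial\varphi}{\partial t} + (v^m - u^m)\spcdot\text{(something)}\,\dx\dt = \int_{\Omega_\infty}\varphi\,\de(\mu_v - \mu_u) \le 0,
\end{displaymath}
once $\varphi \ge 0$. The key algebraic trick is to linearise the nonlinearity: write $v^m - u^m = a(x,t)(v-u)$, where $a = (v^m-u^m)/(v-u)$ where $v \ne u$ and, say, $a = 1$ otherwise. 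Since $m>1$ and $u,v \ge 0$, one has $a \ge 0$; moreover $a$ is bounded on the set where $u,v$ are bounded, but in general $a$ may only be nonnegative and locally integrable, which is the source of the technical difficulty. Setting $w = v - u$, the inequality becomes $\iint w(-\varphi_t - a\,\Delta\varphi) \le 0$ for all admissible $\varphi \ge 0$, after moving the gradient off of $v^m - u^m$; so we would like to choose $\varphi$ solving the backward dual problem $-\varphi_t - a\,\Delta\varphi = \psi$ with $\varphi(\cdot,\infty) = 0$ and $\varphi = 0$ on the lateral boundary, for an arbitrary $\psi \ge 0$. Then $\iint w\psi \le 0$ for all such $\psi$, forcing $w \le 0$, i.e.\ $v \le u$ a.e.

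The steps, in order, would be: (i) smoothly approximate and truncate the coefficient $a$, replacing it by $a_\varepsilon$ bounded and bounded below by a positive constant $\varepsilon$, so that the dual equation $-\partial_t\varphi_\varepsilon - a_\varepsilon\Delta\varphi_\varepsilon = \psi$ is uniformly parabolic and has a smooth nonnegative solution $\varphi_\varepsilon$ with the required zero terminal and lateral data (nonnegativity of $\varphi_\varepsilon$ coming from the maximum principle since $\psi \ge 0$); (ii) use $\varphi_\varepsilon$ as a test function in the subtracted weak formulation, which is legitimate because $u^m, v^m \in L^2(0,\infty;H^1_0(\Omega))$ and $u,v$ vanish on $\partial_p\Omega_\infty$, so no boundary terms appear and the time integration by parts is clean; (iii) obtain $\iint w\psi \le \iint w(a - a_\varepsilon)\Delta\varphi_\varepsilon$, and show that the right-hand side tends to $0$ as $\varepsilon \to 0$. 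For (iii) one needs a priori estimates on $\varphi_\varepsilon$ — typically a bound on $\iint a_\varepsilon |\Delta\varphi_\varepsilon|^2$ and on $\sup|\nabla\varphi_\varepsilon|$ or $\iint|\nabla\varphi_\varepsilon|^2$ in terms of $\psi$, uniform in $\varepsilon$ — combined with the fact that $a_\varepsilon \to a$ pointwise a.e.\ and $|a - a_\varepsilon|$ is controlled; a Cauchy–Schwarz split $\iint |w|\,|a-a_\varepsilon|^{1/2}\cdot|a-a_\varepsilon|^{1/2}|\Delta\varphi_\varepsilon|$ together with dominated convergence on the first factor does the job. Finally, letting $\psi$ range over a dense enough family of nonnegative functions yields $w \le 0$ a.e.

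The main obstacle is step (iii): the error term control. Because $u$ and $v$ need not be bounded, the coefficient $a=(v^m-u^m)/(v-u)$ is only nonnegative and not bounded away from $0$ or $\infty$, so the dual problem is genuinely degenerate and one cannot get $L^\infty$ bounds on $\Delta\varphi_\varepsilon$ for free. The right way around this is to test the dual equation against $\Delta\varphi_\varepsilon$ itself (or against $\varphi_\varepsilon$, and against $-\Delta\varphi_\varepsilon$) to extract the weighted energy estimate $\iint a_\varepsilon|\Delta\varphi_\varepsilon|^2 \,\dx\dt \le C\|\psi\|^2$ with $C$ independent of $\varepsilon$, and then to exploit that $|a-a_\varepsilon| \le a + a_\varepsilon$ so that $|a-a_\varepsilon|\,|\Delta\varphi_\varepsilon|^2$ is dominated by an integrable function, while $|a-a_\varepsilon| \to 0$ a.e.; dominated convergence closes the gap. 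One also has to be slightly careful that $w = v-u \in L^2$ (which holds since $u,v$ are weak supersolutions with $u^m,v^m \in L^2 H^1_0$ and $u,v$ are then in a suitable $L^2$-type space), so that the Cauchy–Schwarz step is justified. Everything else — existence and regularity of $\varphi_\varepsilon$, admissibility as a test function, the sign of the measure term — is standard parabolic theory once the degeneracy has been regularised away.
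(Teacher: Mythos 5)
Your overall route is the same as the paper's: subtract the two weak formulations, use $\mu_v\leq\mu_u$ to obtain a signed inequality, pass to test functions vanishing only on the lateral boundary, and then run the classical duality argument with the linearized coefficient $a=(v^m-u^m)/(v-u)$ and a regularized backward dual problem. The paper does exactly this, except that it delegates the duality step to \cite{DK}, \cite{DaskalopoulosKenig} and \cite{VazquezBook} instead of writing it out; so the comparison is really between your execution of that step and the standard one.

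There is a genuine gap in your control of the decisive error term $\int w\,(a-a_\varepsilon)\Delta\varphi_\varepsilon\dxdt$. You propose the split $|w|\,|a-a_\varepsilon|^{1/2}\cdot|a-a_\varepsilon|^{1/2}|\Delta\varphi_\varepsilon|$ and then to handle $\int|a-a_\varepsilon|\,|\Delta\varphi_\varepsilon|^2\dxdt$ by dominated convergence via $|a-a_\varepsilon|\leq a+a_\varepsilon$. This fails for two reasons. First, the only uniform estimate the dual problem gives you is $\int a_\varepsilon|\Delta\varphi_\varepsilon|^2\dxdt\leq C$; the portion $\int a\,|\Delta\varphi_\varepsilon|^2\dxdt$ is not controlled at all, since on the set where $a_\varepsilon$ has been truncated $a$ may be far larger than $a_\varepsilon$. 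Second, dominated convergence requires a single integrable majorant independent of $\varepsilon$, and $(a+a_\varepsilon)|\Delta\varphi_\varepsilon|^2$ changes with $\varepsilon$ through $\varphi_\varepsilon$, about which you have no pointwise information. The standard device (used verbatim by the paper in its proof of Theorem \ref{Comparison1}) is to choose smooth $a_k$ with $1/k\leq a_k\leq k$ and $\int (a-a_k)^2/a_k\,\dxdt\to 0$, and to place the weight $a_k$, not $|a-a_k|$, next to $(\Delta\varphi_k)^2$:
\begin{displaymath}
  \Big|\int w(a-a_k)\Delta\varphi_k\dxdt\Big|\leq\Big(\int w^2\frac{(a-a_k)^2}{a_k}\dxdt\Big)^{1/2}\Big(\int a_k(\Delta\varphi_k)^2\dxdt\Big)^{1/2}\,,
\end{displaymath}
so that the second factor is bounded by the dual energy estimate and the first tends to zero by the choice of $a_k$. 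Note also that this last step needs boundedness or sufficient integrability of $w$ and $a$ (the lemma does not assume $u,v$ bounded), which is exactly the point at which the paper falls back on the cited references where the admissible classes are specified; your passing remark that $w\in L^2$ does not by itself justify the limit. With the weighted split in place of your dominated-convergence step, your outline coincides with the standard proof the paper invokes.
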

\begin{proof}
  Let $\varphi\in C^\infty_0(\Omega_\infty)$ be nonnegative.  By
  subtracting the equations satisfied by $u$ and $v$ and using the
  assumption about the measures, we have
  \begin{equation*}
    \int_{\Omega_\infty}-(u-v)\varphi_t+\nabla(u^m-v^m)\spcdot \nabla
    \varphi\dx \dt=\int_{\Omega_\infty}\varphi\de
    \mu_u-\int_{\Omega_\infty}\varphi\de \mu_v\geq 0\,.
  \end{equation*}
  By a standard approximation argument using the fact that $u^m,
  v^m\in L^2(0,\infty;H^1_0(\Omega))$, we may also take the test
  functions $\varphi\in C^\infty(\Omega_\infty)$ so that $\varphi=0$
  on the lateral boundary $\partial\Omega\times(0,\infty)$. We apply
  the Green's formula to get
  \begin{equation*}
    \int_{\Omega_\infty}-(u-v)\varphi_t-(u^m-v^m) \Delta
    \varphi\dx \dt\geq 0\,.
  \end{equation*}
  The fact that $v\leq u$ follows from this inequality by repeating
  the standard duality proof for the comparison principle for the PME,
  see e.g. \cite[Lemma 5]{DK}, \cite[Theorem
  1.1.1]{DaskalopoulosKenig}, or \cite[Theorem 6.5]{VazquezBook}.
\end{proof}

\begin{lemma}
	\label{Convergence} Let $u_i$, $i=1,2,\ldots$ is a uniformly bounded sequence of weak supersolutions in $\Omega_\infty$ such that $u_i \to u$ a.e. in $\Omega_\infty$. Then $u$ is a weak supersolution in $\Omega_\infty$ and 
	\begin{equation*}
		\lim_{i \to \infty} \int_{\Omega_\infty} \phi d \mu_{u_i} = \int_{\Omega_\infty} \phi d \mu_u\,, 
	\end{equation*}
	for every $\phi \in C_0^\infty(\Omega_\infty)$. 
\end{lemma}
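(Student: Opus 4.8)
The plan is to combine the uniform energy estimate of Lemma~\ref{lem:caccioppoli} with a.e.\ convergence to pass to the limit in the two integral identities involved: the one defining a weak supersolution, and the one defining the Riesz measure. Write $M$ for a uniform bound, so $\abs{u_i}\le M$ for all $i$, and hence $\abs{u}\le M$ a.e.

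\emph{Compactness of the gradients.} Fix a cylinder $U_{t_1,t_2}\Subset\Omega_\infty$, pick a finite $T'>t_2$ and $\eta\in C_0^\infty(\Omega)$ with $\eta\equiv 1$ on $U$. Applying Lemma~\ref{lem:caccioppoli} to each $u_i$ on $\Omega_{T'}$ gives $\iint_{U_{t_1,t_2}}\abs{\nabla u_i^m}^2\dxdt\le C(M,T',\eta)$, uniformly in $i$. On the other hand, since $0\le u_i\le M$ and $u_i\to u$ a.e., dominated convergence shows $u_i\to u$ in $L^p_{loc}(\Omega_\infty)$ for every $p<\infty$, and since $s\mapsto s^m$ is Lipschitz on $[0,M]$ also $u_i^m\to u^m$ in $L^p_{loc}(\Omega_\infty)$; in particular $u^m\in L^2_{loc}$. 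The uniform $L^2_{loc}$ bound on $\nabla u_i^m$ together with this strong convergence identifies the weak $L^2_{loc}$ limit of $\nabla u_i^m$ in the usual way (test against $\Phi\in C_0^\infty(\Omega_\infty;\R^n)$ and integrate by parts): it equals $\nabla u^m$. Thus $u^m\in L^2_{loc}(0,\infty;H^1_{loc}(\Omega))$, and $\nabla u_i^m\to\nabla u^m$ weakly in $L^2_{loc}(\Omega_\infty)$.

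\emph{The limit is a weak supersolution.} Let $\varphi\in C_0^\infty(\Omega_\infty)$ be nonnegative. For each $i$,
\[
	\int_{\Omega_\infty}-u_i\frac{\partial\varphi}{\partial t}+\nabla u_i^m\spcdot\nabla\varphi\dxdt\ge 0\,.
\]
The first term tends to $\int_{\Omega_\infty}-u\,\partial_t\varphi\dxdt$ because $u_i\to u$ in $L^1_{loc}$ and $\partial_t\varphi$ is bounded with compact support; the second tends to $\int_{\Omega_\infty}\nabla u^m\spcdot\nabla\varphi\dxdt$ by the weak $L^2_{loc}$ convergence tested against the fixed compactly supported function $\nabla\varphi$. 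The inequality passes to the limit, and together with $u\ge 0$ this shows $u$ is a weak supersolution in $\Omega_\infty$; hence it has a Riesz measure $\mu_u$.

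\emph{Convergence of the Riesz measures.} By the defining identity of the Riesz measure (which, being a consequence of the Riesz representation theorem, holds for every $\phi\in C_0^\infty(\Omega_\infty)$, not only nonnegative ones),
\[
	\int_{\Omega_\infty}\phi\de\mu_{u_i}=\int_{\Omega_\infty}-u_i\frac{\partial\phi}{\partial t}+\nabla u_i^m\spcdot\nabla\phi\dxdt\,,
\]
and the same identity holds for $u$ and $\mu_u$. The right-hand side converges as $i\to\infty$ by the exact argument of the previous paragraph, so $\int_{\Omega_\infty}\phi\de\mu_{u_i}\to\int_{\Omega_\infty}\phi\de\mu_u$, as claimed.

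\emph{Main obstacle.} The whole lemma is essentially an exercise in passing to limits; the only non-elementary input is the uniform energy bound of Lemma~\ref{lem:caccioppoli}, which yields weak $L^2_{loc}$ compactness of the gradients. The single point requiring a little care is the identification of the weak limit of $\nabla u_i^m$ with $\nabla u^m$, for which the strong $L^p_{loc}$ convergence of $u_i^m$ — itself a consequence of uniform boundedness and a.e.\ convergence — is exactly what is needed.
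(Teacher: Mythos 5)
Your proposal is correct and follows essentially the same route as the paper: uniform boundedness plus a.e.\ convergence to pass to the limit in the defining integral identities, with Lemma~\ref{lem:caccioppoli} supplying the uniform $L^2_{loc}$ bound on $\nabla u_i^m$. The only difference is cosmetic: the paper first verifies the supersolution inequality in the very weak form (with $-u^m\Delta\phi$) and leaves the weak convergence $\nabla u_i^m\rightharpoonup\nabla u^m$ implicit in the measure computation, whereas you spell out the identification of the weak gradient limit explicitly, which is exactly the step the paper's terse argument relies on.
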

\begin{proof}
	Due to the uniform bound on the functions $u_i$, it easily follows that 
	\begin{equation} \label{weakineq}
		\int_{\Omega_\infty} -u\frac{
		\partial \phi}{
		\partial t}-u^m \Delta \phi \dx\dt\geq 0\,. 
	\end{equation}
	An application of Lemma \ref{lem:caccioppoli} on each $u_i$
        implies that $\nabla u^m\in L^2_{loc}(\Omega_\infty)$. This, 
        together with \eqref{weakineq} yields that $u$ is a weak
        supersolution. The claim about the measures follows from the
        computation
	\begin{align*}
		\lim_{i \to \infty} \int_{\Omega_\infty} \phi d \mu_{u_i}=&\lim_{i\to\infty} \int_{\Omega_\infty}-u_i\frac{
		\partial \phi}{
		\partial t}+\nabla u_i^m \spcdot\nabla\phi\dx\dt\\
		= &\int_{\Omega_\infty}-u\frac{
		\partial \phi}{
		\partial t}+\nabla u^m \spcdot\nabla\phi\dx\dt\\
		=& \int_{\Omega_\infty} \phi d \mu_u.\qedhere 
	\end{align*}
\end{proof}

We will frequently use the following characterization of the weak convergence of measures. See \cite[Theorem 1, p. 54]{EG} for the proof. 
\begin{theorem}
	\label{thm:weak-meas} Let $\mu$ and $\mu_k$, $k=1,2,3,\ldots,$ be Radon measures on $\R^n$. Then the following statements are equivalent. 
	\begin{enumerate}
		\item For all compactly supported smooth functions $\phi$, one has 
		\begin{displaymath}
			\lim_{k\to \infty}\int_{\R^n}\phi d\mu_k=\int_{\R^n}\phi d\mu\,. 
		\end{displaymath}
		\item For all compact sets $K$, one has 
		\begin{displaymath}
			\limsup_{k\to \infty}\mu_k(K)\leq\mu(K)\,. 
		\end{displaymath}
              \item 
		For all open sets $U$, one has 
		\begin{displaymath}
			\mu(U)\leq \liminf_{k\to \infty}\mu_k(U)\,. 
		\end{displaymath}
	\end{enumerate}
\end{theorem}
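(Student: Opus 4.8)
The plan is to prove $(1)\Rightarrow(2)$ and $(1)\Rightarrow(3)$ from the regularity of Radon measures, and then to recover $(1)$ from $(2)$ together with $(3)$ via the distribution–function (layer-cake) representation of the integral; here ``equivalent'' should be read as ``$(1)$ holds if and only if both $(2)$ and $(3)$ hold'' — neither of $(2)$, $(3)$ alone implies $(1)$, as $\mu_k\equiv 0$, $\mu\neq 0$ satisfies $(2)$ but not $(1)$. Two standard facts will be used throughout: every Radon measure on $\R^n$ is outer regular and is inner regular on open sets; and for $\phi\in C_0^\infty(\R^n)$ and $t>0$ the superlevel set $\{\phi\ge t\}$ is compact while $\{\phi>t\}$ is open.

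For $(1)\Rightarrow(2)$, fix a compact set $K$ and $\varepsilon>0$. By outer regularity choose an open $U\supset K$ with $\mu(U)<\mu(K)+\varepsilon$, and pick a smoothed Urysohn function $\phi\in C_0^\infty(\R^n)$ with $0\le\phi\le 1$, $\phi\equiv 1$ on $K$, and $\supp\phi\subset U$. Then $\mu_k(K)\le\int\phi\,d\mu_k\to\int\phi\,d\mu\le\mu(U)<\mu(K)+\varepsilon$, so $\limsup_k\mu_k(K)\le\mu(K)+\varepsilon$; letting $\varepsilon\to 0$ gives $(2)$. The implication $(1)\Rightarrow(3)$ is the dual argument: given an open $U$ and any $t<\mu(U)$, inner regularity produces a compact $K\subset U$ with $\mu(K)>t$; choosing $\phi$ as above with $\phi\equiv 1$ on $K$ and $\supp\phi\subset U$ yields $\mu_k(U)\ge\int\phi\,d\mu_k\to\int\phi\,d\mu\ge\mu(K)>t$, hence $\liminf_k\mu_k(U)\ge t$, and $t\uparrow\mu(U)$ finishes it.

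For the converse, splitting $\phi=\phi^+-\phi^-$ and rescaling, it suffices to prove convergence for a continuous, compactly supported $\phi$ with $0\le\phi\le 1$, say $\supp\phi\subset K_0$ with $K_0$ compact. The layer-cake formula gives $\int\phi\,d\nu=\int_0^1\nu(\{\phi>t\})\,dt$ for every Radon measure $\nu$. Since the sets $\{\phi=t\}$, $t\in(0,1)$, are pairwise disjoint and all contained in $K_0$, which has finite $\mu$-measure, one has $\mu(\{\phi>t\})=\mu(\{\phi\ge t\})$ for all but countably many $t$. Applying $(3)$ to the open set $\{\phi>t\}$ and Fatou's lemma gives $\liminf_k\int\phi\,d\mu_k\ge\int_0^1\mu(\{\phi>t\})\,dt=\int\phi\,d\mu$. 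For the opposite inequality, $(2)$ applied to $K_0$ shows $\sup_k\mu_k(K_0)<\infty$, so the functions $t\mapsto\mu_k(\{\phi>t\})$ are uniformly bounded on $(0,1)$; then the reverse Fatou lemma, $(2)$ applied to the compact sets $\{\phi\ge t\}$, and the a.e.\ identity above give $\limsup_k\int\phi\,d\mu_k\le\int_0^1\mu(\{\phi\ge t\})\,dt=\int\phi\,d\mu$. Combining the two bounds yields $(1)$.

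The routine ingredients — existence of smooth Urysohn functions, the layer-cake identity, regularity of Radon measures — aside, the step needing real care is the last one: one must interchange $\lim$ and $\int\,dt$ in both directions, and the upper bound is not a plain Fatou estimate, since it requires a domination that condition $(2)$ alone supplies, through the uniform bound $\sup_k\mu_k(K_0)<\infty$ on the fixed compact set $K_0\supset\supp\phi$. A secondary point is that the open superlevel sets $\{\phi>t\}$ (to which $(3)$ applies) must be swapped for the compact ones $\{\phi\ge t\}$ (to which $(2)$ applies), which is legitimate only away from the at most countably many levels $t$ with $\mu(\{\phi=t\})>0$.
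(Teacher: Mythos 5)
Your proposal is correct, and it is worth noting at the outset that the paper itself does not prove Theorem \ref{thm:weak-meas}: it is quoted verbatim with a reference to \cite[Theorem 1, p.~54]{EG}, so there is no in-paper proof to compare against. Your reading of ``equivalent'' is the right one and deserves emphasis: as literally itemized, (2) alone does not imply (1) (your example $\mu_k\equiv 0$, $\mu\neq 0$), nor does (3) alone (e.g.\ $\mu_k=2\mu\neq 0$); in \cite{EG} the compact and open conditions are bundled into a single statement, so the intended equivalence is $(1)\Leftrightarrow\big((2)\wedge(3)\big)$, which is exactly what you prove. Your forward implications (smooth Urysohn functions plus outer regularity for compacts, inner regularity for open sets) are the standard ones. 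For the converse your route differs from the textbook argument: Evans--Gariepy pass through bounded Borel sets $B$ with $\mu(\partial B)=0$ and approximate the test function by sums over level sets chosen at non-atomic levels, whereas you use the layer-cake identity $\int\phi\,d\nu=\int_0^1\nu(\{\phi>t\})\,dt$, Fatou for the lower bound via (3), and reverse Fatou for the upper bound via (2), with the domination supplied by $\limsup_k\mu_k(K_0)\leq\mu(K_0)<\infty$ on a fixed compact $K_0\supset\supp\phi$ and the swap $\{\phi>t\}\leftrightarrow\{\phi\geq t\}$ justified at all but countably many levels. Both arguments are of comparable length; yours has the small advantage of avoiding the intermediate ``$\mu(\partial B)=0$'' condition altogether, and you correctly isolate the two genuinely delicate points (the uniform bound needed for reverse Fatou, and the countable exceptional set of levels). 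The only cosmetic remark is that your reduction to continuous $\phi$ with $0\leq\phi\leq 1$ proves slightly more than (1) requires, which is harmless.
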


\section{A comparison principle}
\label{sec:comp}

The core of our  arguments is a suitable form of the comparison principle, which we will prove in this section. We will work extensively with finite unions of space-time cylinders, so we begin by introducing some notation for such sets. For space-time cylinders $ U_{t_1,t_2}=U\times(t_1,t_2)$, we denote the lateral boundary by 
\begin{displaymath}
	\Si(U_{t_1,t_2})=\partial U\times (t_1,t_2)\,. 
\end{displaymath}
For a cylinder the definition of the parabolic boundary is standard, but for finite unions of space time cylinders we will recall the definitions. The lateral boundary of a finite union of space time cylinders $U^i_{t_1^i, t_2^i}$ is then given by 
\newcommand{\Ui}{U^i_{t_1^i, t_2^i}} 
\begin{equation*}
	\Si (\cup \Ui) := (\cup \Si(\Ui)) \setminus (\cup \Ui)\,. 
\end{equation*}
We also denote the tops of $\cup \Ui$ by 
\begin{equation*}
	\T (\cup \Ui) := (\cup \overline{U^i} \times \{t_2^i\}) \setminus (\cup \Ui)\,,
\end{equation*}
and the bottoms similarly as 
\begin{equation*}
	\Bo (\cup \Ui) := (\cup \overline{U^i} \times \{t_1^i\}) \setminus (\cup \Ui)\,. 
\end{equation*}
Thus the parabolic boundary of $Q = \cup \Ui$ is $\Si(Q) \cup \Bo(Q)$ and the parabolic boundary of backwards in time equations becomes $\Si(Q) \cup \T(Q)$.

We want to use the very weak (i.e. distributional) formulation of the
porous medium equation, so we consider smooth test functions $\phi \in
C^\infty(Q)$ where $Q = \cup \Ui$, such that $\phi = 0$ on
$\Si(Q)$. Note that the gradient of $\phi$ does not necessarily vanish
on $\Si(Q)$.  In the following we will always work with $\Omega$ a
smooth domain. Let us now write the PME in terms of the above class of
test functions. Assume at first that $\phi$ has compact support in
space. Then a standard approximation argument shows that we may write
the definition of weak solutions as
\begin{equation*}
	\int_{Q}\big [-u\phi_t +  \nabla u^m \spcdot\nabla \phi \big ] \dxdt 
	+\int_{\T(Q)} u \phi \dx - \int_{\Bo(Q)} u \phi \dx
	=0\,.
\end{equation*}
After this, we may pass from compactly supported test functions to test functions vanishing on the sides $\Si(Q)$, since $u$ and $\nabla u^m$ are in $L^2(Q)$. Now, apply Green's formula, which is
justified by the usual trace theorem, to get 
\begin{multline} \label{weakintpart}
	\int_{\T(Q)} u \phi \dx - \int_{\Bo(Q)} u \phi \dx +\int_Q \big [-u \phi_t  -  u^m \Delta \phi \big ] \dxdt \\
	+ \int_{\Si(Q)} u^m \partial_n \phi\, d \sigma \dt=0\,.
\end{multline}
A similar argument can be carried out for weak supersolutions and subsolutions. In these cases, we get the appropriate inequalities in the final form.  This formulation will be our starting point in the proof of the comparison principle.

\begin{theorem}
  \label{Comparison1} 
  Let $K$ be a compact set in $\Omega_T$ where $\Omega$ is a smooth
  domain, let $u$ be a nonnegative upper semicontinuous function which
  is a continuous weak supersolution in in $\Omega_T \setminus K$ and
  satisfies $u^m\in L^2(0,T; H^1(\Omega))$.  Let $v$ be a non-negative
  lower semicontinuous function which is a weak supersolution in
  $\Omega_T\setminus K$ and satisfies $v^m\in L^2(0,T;H^1(\Omega))$,
  $v > 0$ on $K$ and $u \leq v$ on $K \cup \partial_p \Omega_T$. Then
  $u \leq v$ in $\Omega_T$.
% Let $K$ be a compact set in $\Omega_T$ where
%   $\Omega$ is a smooth domain, let $u$ be an upper semicontinuous weak
%   supersolution in $\Omega_T$, a non-negative continuous weak solution
%   in $\Omega_T \setminus K$
%   % , $u = 0$ on $\partial_p \Omega_T$, and
% 	such that $u^m\in L^2(0,T; H^1(\Omega))$. Let $v$ be a non-negative lower semicontinuous weak supersolution such that $v^m\in L^2(0,T;H^1(\Omega))$, $v > 0$ on $K$ and $u \leq v$ on $K \cup \partial_p \Omega_T$. Then $u \leq v$ in $\Omega_T$.
\end{theorem}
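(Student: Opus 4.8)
The plan is to carry out the classical duality proof of the comparison principle for the PME (as in the proof of Lemma~\ref{ComparisonPrinciple}), but on finite unions of space-time cylinders exhausting $\Omega_T\setminus K$, and with $u$ replaced by $\lambda u$ for a constant $\lambda\in(0,1)$ that is eventually sent to $1$. I would first reduce the claim. Since $u$ is upper semicontinuous and $v$ is lower semicontinuous, the pointwise inequality $u\le v$ in $\Omega_T$ will follow once we know $\lambda u\le v$ a.e.\ in $\Omega_T\setminus K$ for every $\lambda\in(0,1)$, together with the assumption $u\le v$ on $K$. So fix $\lambda\in(0,1)$. Because $v$ is lower semicontinuous and strictly positive on the compact set $K$, there is an open $G$ with $K\subset G$ and $\overline G\subset\Omega_T$ on which $v$ is bounded below by a positive constant; shrinking $G$ and using that $u$ is upper semicontinuous with $u\le v$ on $K$, a covering argument over $K$ lets us also arrange $\lambda u<v$ on $\overline G\setminus K$. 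Hence $\lambda u<v$ on $G$, and it remains to prove $\lambda u\le v$ in $\Omega_T\setminus\overline G$.

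Next I would exhaust $\Omega_T\setminus\overline G$ from inside by finite unions of space-time cylinders $Q$, chosen so that the part of the parabolic boundary of $Q$ facing $K$ lies in $G$ (where $\lambda u<v$) and the remaining part is close to $\partial_p\Omega_T$ (where $\lambda u\le u\le v$). On such a $Q$ one uses the integrated formulation \eqref{weakintpart}. The role of the scaling is that, although $\lambda u$ is not a solution, multiplying the relation for $u$ by $\lambda$ and writing $\lambda u^m=(\lambda u)^m+(\lambda-\lambda^m)u^m$ shows that $\lambda u$ satisfies \eqref{weakintpart} up to an error term of the form $(\lambda-\lambda^m)\int_Q\nabla u^m\cdot\nabla\phi$. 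Since $\lambda-\lambda^m>0$ for $\lambda\in(0,1)$, $m>1$, and since $u^m\in L^2(0,T;H^1(\Omega))$, this error is $O(\lambda-\lambda^m)$ provided the dual test functions $\phi$ introduced below have $\nabla\phi$ bounded in $L^2(Q)$ uniformly; in particular it vanishes as $\lambda\to1$.

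Then comes the duality argument proper. Subtract the relation for $v$ from the one for $\lambda u$, use Green's formula in the form \eqref{weakintpart}, and write $(\lambda u)^m-v^m=a\,(\lambda u-v)$ with $a=\big((\lambda u)^m-v^m\big)/(\lambda u-v)\ge0$. Regularize $a$ to a smooth $a_\varepsilon$ with $\varepsilon\le a_\varepsilon\le\varepsilon^{-1}$, and take $\phi=\phi_\varepsilon\ge0$ solving the backward dual problem $\partial_t\phi+a_\varepsilon\Delta\phi=\psi$ in $Q$ with $\psi\ge0$ smooth and $\phi=0$ on the backward parabolic boundary $\Si(Q)\cup\T(Q)$. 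All boundary contributions then have the right sign: the $\T(Q)$-term drops, the $\Si(Q)$-term equals $\int_{\Si(Q)}\big((\lambda u)^m-v^m\big)\partial_n\phi\ge0$ (because $\lambda u\le v$ on $\Si(Q)$ and $\partial_n\phi\le0$ there), and the $\Bo(Q)$-term equals $\int_{\Bo(Q)}(v-\lambda u)\phi\ge0$ (because $\lambda u\le v$ on $\Bo(Q)\subset\partial G\cup\partial_p\Omega_T$). One is left with $\int_Q(v-\lambda u)\psi\ge-E(\lambda,\varepsilon)$, where $E(\lambda,\varepsilon)$ gathers the usual regularization error (controlled as in the cylindrical case via $\int_Q a_\varepsilon(\Delta\phi)^2\le C$ and boundedness of $\lambda u-v$) together with the scaling error above; letting $\varepsilon\to0$ and then $\lambda\to1$ forces $E\to0$. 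As $\psi\ge0$ is arbitrary, $v\ge\lambda u$ a.e.\ in $Q$; exhausting $Q\uparrow\Omega_T\setminus\overline G$, then letting $\lambda\uparrow1$, and finally invoking semicontinuity of $u$ and $v$ gives $u\le v$ in $\Omega_T\setminus\overline G$; combined with $\lambda u<v$ on $G$ and $u\le v$ on $K$ this yields $u\le v$ in all of $\Omega_T$.

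The main difficulty is twofold. The classical technical core is the solvability and the a priori estimates for the degenerate dual problem, whose coefficient $a$ vanishes exactly where $u$ and $v$ both vanish; I would take these from the standard treatments of the cylindrical PME comparison principle, being careful that the bound on $\nabla\phi_\varepsilon$ needed to absorb the scaling error is uniform in $\varepsilon$. The genuinely new point is the geometry: once $\Omega_T\setminus K$ is replaced by unions of cylinders, one must ensure that the inner parabolic boundary of the exhausting cylinders can be kept inside the semicontinuity neighborhood $G$ where $\lambda u<v$, and that the traces of $u^m$ and $v^m$—which is where the hypotheses $u^m,v^m\in L^2(0,T;H^1(\Omega))$ enter—are under control along $\partial_p\Omega_T$. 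Everything else is arranged so that the extra terms produced by the scaling are $O(\lambda-\lambda^m)$ and disappear in the limit $\lambda\to1$.
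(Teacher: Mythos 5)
Your analytic core coincides with the paper's: you scale $u$ by a constant $\lambda<1$ (the paper uses $1/(1+\epsilon)$), note that $\lambda u$ solves the PME up to an error of size $\lambda-\lambda^m$ times $\Delta u^m$, absorb that error using $u^m\in L^2(0,T;H^1(\Omega))$ together with a uniform bound on $\nabla\phi_\varepsilon$, and run the Dahlberg--Kenig duality argument with a regularized dual coefficient on finite unions of space-time cylinders. The genuine gap is in the geometric reduction. First, it is internally inconsistent to ``exhaust $\Omega_T\setminus\overline G$ from inside'' by unions $Q$ whose parabolic boundary facing $K$ ``lies in $G$'': if $Q\subset\Omega_T\setminus\overline G$, no part of $\partial_p Q$ can lie in the open set $G$. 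Second, and more seriously, on the remaining part of $\partial_p Q$ you invoke $\lambda u\le u\le v$ at points that are only ``close to'' $\partial_p\Omega_T$. The hypothesis gives $u\le v$ only on $K\cup\partial_p\Omega_T$, and since $u,v$ are merely semicontinuous in $\Omega_T$ nothing transfers this inequality to nearby interior points; hence the boundary terms $\int_{\Bo(Q)}(v-\lambda u)\phi\dx$ and $\int_{\Si(Q)}\big[(\lambda u)^m-v^m\big]\partial_n\phi\,d\sigma\dt$ have no sign there, and the argument cannot be closed on any union whose parabolic boundary stays strictly inside $\Omega_T$ and outside $G$.

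The paper circumvents exactly this by placing the cylinders around the \emph{bad set} rather than around the complement of a neighborhood of $K$: with $u_\epsilon=u/(1+\epsilon)$, the set $D_\epsilon=\{u_\epsilon\ge v\}$ is closed in $\Omega_T$ and has positive distance to $K$ (this is where $v>0$ and $u\le v$ on $K$ enter), so it can be covered by a finite union of small cylinders avoiding $K$; after intersecting with $\Omega_T$, every piece of the parabolic boundary of this union that lies \emph{inside} $\Omega_T$ is automatically contained in $\{u_\epsilon<v\}$, and the only pieces where any boundary information is needed lie on $\partial_p\Omega_T$ itself, where the hypothesis and the trace assumptions $u^m,v^m\in L^2(0,T;H^1(\Omega))$ apply. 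If you replace your exhaustion by such a covering (equivalently, a finite union of cylinders avoiding $K$, reaching $\partial_p\Omega_T$, whose interior boundary lies in the open set $\{\lambda u<v\}\supset K$), the rest of your duality computation --- the signs of the boundary terms, the bound on $\int a_\varepsilon(\Delta\phi_\varepsilon)^2$, the uniform $L^2$ bound on $\nabla\phi_\varepsilon$, and the vanishing of the $O(\lambda-\lambda^m)$ error --- runs exactly as in the paper, cf.\ \eqref{weakintpart}, \eqref{akDeltau_2}, \eqref{fbound1}, \eqref{psikbound}. Note also that, just like the paper's own proof, your argument really uses that $u$ satisfies the solution identity (or at least the subsolution inequality) in the region where the duality argument is run, not merely the supersolution inequality stated in the theorem.
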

\begin{proof}
	We let $\epsilon > 0$, and denote 
	\begin{equation*}
		D_\epsilon = \left \{(x,t) \in \Omega_T: \frac{u}{1+\epsilon} \geq v \right \}\,.
	\end{equation*}
	The function $\frac{u}{1+\epsilon}-v$ is upper semicontinuous, so that the set $D_\epsilon$ is closed in $\Omega_T$. Moreover, the set $D_\epsilon$ does not intersect the set $K$, since $u\leq v$ on $K$ and $\inf_{K} v > 0$. Since $K$ is compact, there is a positive distance between $D_\epsilon$ and $K$. Thus we can cover $D_\epsilon$ with a finite collection of space time cylinders not intersecting $K$. Denote the covering set by $\hat D^F_{\epsilon}$, and note that since $D_{\epsilon_1} \subset D_{\epsilon_2}$ for $\epsilon_1 > \epsilon_2$ we may choose the coverings for different values of $\epsilon$ so that $\hat D^F_{\epsilon_1}\subset \hat D^F_{\epsilon_2}$. Let then $D^F_{\epsilon} = \Omega_T \cap D^F_{\epsilon}$. The set $D^F_{\epsilon}$ is still a finite union of space-time cylinders, and the function $u$ is a weak solution in $D^F_{\epsilon}$.

	Let $u_\epsilon = \frac{u}{1+\epsilon}$. We want to compare $u_\epsilon$ with $v$ in $D^F_{\epsilon}$. To this end, note first that $u_\epsilon < v$ on $
	\partial D^F_{\epsilon}$. Further, the function $u_\epsilon$ is a solution to 
	\begin{equation*}
		(u_\epsilon)_t - \Delta (u_\epsilon)^m = f:= \frac{(1+\epsilon)^{m-1} - 1}{(1+\epsilon)^{m}}\Delta u^m\,,
	\end{equation*}
	interpreted in the sense of distributions. To see this, we compute 
	\begin{align} \label{uepsilon}
		[u_\epsilon]_t-\Delta [u_\epsilon]^m=& [u_\epsilon-u]_t-\Delta[u_\epsilon^m-u^m] \\
		=&\left(\frac{1}{1+\epsilon}-1\right)u_t -\left(\frac{1}{(1+\epsilon)^m}-1\right)\Delta u^m \notag \\
		=&\left(\frac{1}{1+\epsilon}-\frac{1}{(1+\epsilon)^{m}}\right)\Delta u^m =\frac{(1+\epsilon)^{m-1} - 1}{(1+\epsilon)^{m}}\Delta u^m\,. \notag
	\end{align}
	
	We aim at adapting the proof of the comparison principle for
        the PME, see e.g. \cite{DK}. To proceed let
	\begin{equation*}
		D^F_{\epsilon,s} = \{ (x,t)\in D^F_{\epsilon}:t\leq s\}\,, 
	\end{equation*}
	and take positive functions $\phi\in C^{\infty}_0(D^F_{\epsilon,s})$, and $\psi \in C^{\infty}(D^F_{\epsilon,s})$ which vanishes on $\Si(D^F_{\epsilon,s})$, $\partial_n \psi\leq 0$ on $\Si(D^F_{\epsilon,s})$ and so that $\psi$ equals $\phi$ on $\T(D^F_{\epsilon,s})$. Denote also $b = u_{\epsilon}-v$ for brevity. Since $b$ is negative and consequently also $u_\epsilon^m-v^m$ on $\partial D^F_{\epsilon}$, we have from \eqref{weakintpart} and \eqref{uepsilon}, since $\partial_n \psi \leq 0$ on $\Si(D^F_{\epsilon})$
	\begin{align*}
		\int_{D^F_{\epsilon,s}} f \psi \dxdt &= \int_{\T(D^F_{\epsilon,s})} b \phi \dx - \int_{\Bo(D^F_{\epsilon,s})} b \psi \dx - \int_{D^F_{\epsilon,s}} b \psi_t \dxdt\\
		&- \int_{D^F_{\epsilon,s}} [u^m_{\epsilon}-v^m] \Delta \psi \dxdt + \int_{\Si(D^F_{\epsilon,s})} [u^m_{\epsilon}-v^m] 
		\partial_n \psi\, d \sigma \dt \\
		&\geq \int_{\T(D^F_{\epsilon,s})} b \phi \dx - \int_{D^F_{\epsilon,s}} b \psi_t \dxdt - \int_{D^F_{\epsilon,s}} [u^m_{\epsilon}-v^m] \Delta \psi \dxdt \,.
	\end{align*}
	We can rewrite this as 
	\begin{align} \label{goodinequality}
		\int_{\T(D^F_{\epsilon,s})} b \phi \dx \leq \int_{D^F_{\epsilon,s}} b (\psi_t + a \Delta \psi) \dxdt + \int_{D^F_{\epsilon,s}} f \psi \dxdt\,,
	\end{align}
	where 
	\begin{equation*}
		a = 
		\begin{cases}
			\frac{u^m_{\epsilon}-v^m}{u_\epsilon-v}, & \text{if }u_\epsilon\not = v\,,\\
			0, & \text{if }u_\epsilon=v\,.
		\end{cases}
	\end{equation*}
	Next we use a regularization to make the term $\psi_t + a \Delta \psi$ small in the above inequality. To do this let $a_k$, $k=1,2,\ldots,$ be smooth functions in $\overline{D^F_{\epsilon,s}}$ such that 
	\begin{equation*}
		\frac{1}{k}\leq a_k \leq k\,,
	\end{equation*}
	and 
	\begin{equation} \label{appconvergence}
		\int_{\overline{D^F_{\epsilon,s}}} \frac{(a_k-a)^2}{a_k} \dxdt \to 0 \quad \text{as} \quad k \to \infty \,.
	\end{equation}
	We replace the function $\psi$ in \eqref{goodinequality} by the solution $\psi_k$ to the following boundary value problem 
	\begin{equation} \label{backwards_bvp}
		\begin{cases}
			u_t + a_k \Delta u = 0\,, &\text{ in } D^F_{\epsilon,s}\,,\\
			u(x,s) = \phi(x,s)\,, &\text{ on } \T(D^F_{\epsilon,s})\,,\\
			u = 0\,, &\text{ on } \Si(D^F_{\epsilon,s})\,,
		\end{cases}
	\end{equation}
	and get by Hölder's inequality
	\begin{multline}
		\label{eq:comp-pf} 
		\int_{\T(D^F_{\epsilon,s})} (u_\epsilon-v) \phi \dx \leq \int_{D^F_{\epsilon,s}} b (a-a_k) \Delta \psi_k \dxdt - \int_{D^F_{\epsilon,s}} f \psi_k \dxdt \\
		\leq \bigg [ \int_{D^F_{\epsilon,s}} b^2 \frac{(a-a_k)^2}{a_k} \dxdt \bigg ]^{1/2} \bigg [ \int_{D^F_{\epsilon,s}} a_k (\Delta \psi_k)^2 \dxdt \bigg ]^{1/2}\\
		+ \int_{D^F_{\epsilon,s}} f \psi_k \dxdt\,.
	\end{multline}

	To continue, we need to estimate the term on the right hand
        side of \eqref{eq:comp-pf} containing the quantity $a_k
        (\Delta \psi_k)^2$ independently of $k$.  To do this we follow
        the calculations of \cite{DK}. We use the equation
        \eqref{backwards_bvp} for $\psi_k$ and integrate by parts,
        first in time and then in space, which gives
	\begin{multline}\label{akDeltau_1}
		\int_{D^F_{\epsilon,s}} [a_k \Delta \psi_k] \Delta \psi_k \dxdt = - \int_{D^F_{\epsilon,s}} [\psi_k]_t \Delta \psi_k \dxdt \\
		= \int_{D^F_{\epsilon,s}} \psi_k [\Delta \psi_k]_t \dxdt - \int_{\T(D^F_{\epsilon,s})} \phi \Delta \phi \dx + \int_{\Bo(D^F_{\epsilon,s})} \psi_k \Delta \psi_k \dx \\
		= \int_{D^F_{\epsilon,s}} \psi_k \Delta [\psi_k]_t \dxdt + \int_{\T(D^F_{\epsilon,s})} |\grad \phi|^2 \dx - \int_{\Bo(D^F_{\epsilon,s})} |\grad \psi_k|^2 \dx \\
		\leq - \int_{\Si(D^F_{\epsilon,s})} \partial_n \psi_k [\psi_k]_t  \de \sigma \dt + \int_{D^F_{\epsilon,s}} \Delta \psi_k [\psi_k]_t \dxdt + \int_{\T(D^F_{\epsilon,s})} |\grad \phi|^2 \dx\,.
	\end{multline}
	Note now that the first term on the right hand side in \eqref{akDeltau_1} vanishes, since for almost every $t \leq s$ we have $[\psi_k]_t = 0$ on $\Si[D^F_{\epsilon,s}]$	due to the fact that $\psi_k$ vanishes smoothly on the boundary. %is smooth up to the boundary and vanishes there. 
	For the second term on the right hand side in \eqref{akDeltau_1}, we use the first line in \eqref{akDeltau_1}. This implies that 
	\begin{align} \label{akDeltau_2}
		\int_{D^F_{\epsilon,s}} a_k (\Delta \psi_k)^2 \dxdt \leq \frac{1}{2}\int_{\T(D^F_{\epsilon,s})}\abs{\nabla \phi}^2\dx\,. 
	\end{align}
	With the estimate \eqref{akDeltau_2} in hand, we see from \eqref{appconvergence} and the fact that $b$ is bounded that 
	\begin{equation}\label{conv1}
		\left ( \int_{D^F_{\epsilon,s}} b^2 \frac{(a-a_k)^2}{a_k} \dxdt \right )^{1/2} \left ( \int_{D^F_{\epsilon,s}} a_k (\Delta \psi_k)^2 \dxdt \right )^{1/2} \to 0 \quad \text{as} \quad k \to 0\,. 
	\end{equation}
	
	To proceed we need to take care of the term involving $f$ on the right hand side in \eqref{eq:comp-pf}. Recall that, as a distribution
	\begin{equation*}
		f = \frac{(1+\epsilon)^{m-1} - 1}{(1+\epsilon)^{m}} \Delta u^m\,.
	\end{equation*}
	Since the function $\psi_k$ vanishes on the lateral boundary $\Si(D^F_{\epsilon,s})$ of $D^F_{\epsilon,s}$, we have 
	\begin{multline} \label{fbound1}
		\int_{D^F_{\epsilon,s}} f \psi_k \dxdt = \frac{(1+\epsilon)^{m-1} - 1}{(1+\epsilon)^{m}} \int_{D^F_{\epsilon,s}} \grad u^m \spcdot \grad \psi_k \dxdt \\
		\leq \frac{(1+\epsilon)^{m-1} - 1}{(1+\epsilon)^{m}} \left (\int_{D^F_{\epsilon,s}} |\grad u^m|^2 \dxdt \right )^{1/2} \left (\int_{D^F_{\epsilon,s}} |\grad \psi_k|^2 \dxdt \right )^{1/2}. 
	\end{multline}
	By the assumption $u^m\in L^2(0,T;H^1_0(\Omega))$, we see that the first integral is bounded independent of $k$ and $\epsilon$. 
	
	Next we need estimate the $L^2$-norm of $|\grad \psi_k|$ independently of $k$, which we do as in \cite[p. 133]{VazquezBook}. Multiply the equation \eqref{backwards_bvp} for $\psi_k$ by the test function $\theta = \Delta \psi_k \chi(t)$, where $\chi(0) = 1/2$ and $\chi(s) = 1$; thus $\chi_t \approx \frac{1}{s}$. Next we integrate by parts, first in space and then in time, and get 
	\begin{multline}\label{vtrick}
          \begin{aligned}
            	0 = & \int_{D^F_{\epsilon,s}} [\psi_k]_t \Delta \psi_k \chi \dxdt + \int_{D^F_{\epsilon,s}} a_k (\Delta \psi_k)^2 \chi \dxdt \\
		=  &-\int_{D^F_{\epsilon,s}} \grad (\psi_k)_t \spcdot \grad \psi_k \chi \dxdt + \int_{D^F_{\epsilon,s}} a_k (\Delta \psi_k)^2 \chi \dxdt \\
		= & -\frac{1}{2}\int_{D^F_{\epsilon,s}} [|\grad \psi_k|^2]_t \chi \dxdt + \int_{D^F_{\epsilon,s}} a_k (\Delta \psi_k)^2 \chi \dxdt \\
		= &\frac{1}{2} \int_{D^F_{\epsilon,s}} (|\grad \psi_k|^2) \chi_t \dxdt - \frac{1}{2}\int_{\T(D^F_{\epsilon,s})} |\grad \psi_k|^2 \chi \\
		&+ \frac{1}{2}\int_{\Bo(D^F_{\epsilon,s})} |\grad \psi_k|^2 \chi + \int_{D^F_{\epsilon,s}} a_k (\Delta \psi_k)^2 \chi \dxdt\,,
          \end{aligned}	
	\end{multline}
	using \eqref{backwards_bvp} and \eqref{vtrick} we get
	\begin{multline}\label{psikbound}
		\frac{1}{s} \int_{D^F_{\epsilon,s}} (|\grad \psi_k|^2) \dxdt + \int_{D^F_{\epsilon,s}} a_k (\Delta \psi_k)^2 \chi \dxdt \\
		\leq C \left ( \int_{\T(D^F_{\epsilon,s})} |\grad \psi_k|^2 \chi \dx - \int_{\Bo(D^F_{\epsilon,s})} |\grad \psi_k|^2 \chi \dx \right ) 
		\leq C \int_{\T(D^F_{\epsilon,s})} |\grad \phi|^2 \dx\,.
	\end{multline}
	Combining \eqref{eq:comp-pf},\eqref{conv1},\eqref{fbound1} and \eqref{psikbound}, we have so far established
	\begin{equation}
		\label{eq:comp-pf2} \int_{\T(D^F_{\epsilon,s})} (u_\epsilon-v) \phi \dx\leq C\frac{(1+\epsilon)^{m-1}-1}{(1+\epsilon)^{m}}\int_{\T(D^F_{\epsilon,s})} |\grad \phi|^2 \dx\,.
	\end{equation}
	Before letting $\epsilon\to 0$, we still need to check that 
	\begin{equation*}
		\int_{\T(D^F_{\epsilon,s})} |\grad \phi|^2 \dx \leq C\,, 
	\end{equation*}
	for some constant $C$ not depending on $\epsilon > 0$. We are free to assume that $\phi \in C_0^\infty (D_{\epsilon_0,s})$ for some $\epsilon_0$. Then, since $D_{\epsilon_0,s} \subset D^F_{\epsilon,s}$ for $\epsilon<\epsilon_0$, we have $\T(D^F_{\epsilon,s}) \cap \overline {D_{\epsilon_0,s}} \subset \T(D_{\epsilon_0,s})$ which proves the desired bound. Thus, letting $\epsilon\to 0$ in \eqref{eq:comp-pf2}, we get that 
	\begin{align*}
		\int_{\overline {D_{\epsilon_0,s}} \cap [\R^n \times \{s\}]} (u-v) \phi \dx \leq 0\,.
		% \epsilon C(u,\phi). 
	\end{align*}
	Since this holds for any positive $\phi$, we obtain that $u \leq v$ a.e. in $\Omega_T \cap [\R^n \times \{s\}]$ for any $s$, and then also in $\Omega_T$. 
\end{proof}

The crucial point in the proof above is that we can approximate the set
\begin{equation*}
	D_\epsilon = \left \{(x,t) \in \Omega_T: \frac{u}{1+\epsilon} \geq v \right \}\,,
\end{equation*}
by finite unions of space time boxes while staying inside the set where $u$ is a weak solution. Thus we can also deduce the following Theorem:

\begin{theorem}
	\label{Comparison2} Let $E$ be an open set in $\R^{n+1}$, let $u$ be a non-negative continuous weak solution in $E$ such that 
	\begin{equation} \nonumber \label{}
		\int_{E} [|u^m|^2+|\nabla u^m|^2] \dxdt < \infty\,.
	\end{equation}
	Let $v$ be a non-negative lower semicontinuous weak supersolution such that 
	\begin{equation} \nonumber \label{}
		\int_{E} [|v^m|^2+|\nabla v^m|^2] \dxdt < \infty\,,
	\end{equation}
	$v > 0$ on $\partial E$ and $u \leq v$ on $\partial E$. Then
        $u \leq v$ in $E$. Furthermore if a connected component of $\partial E$ is the boundary of a finite union of space-time cylinders then we can remove the assumption $v > 0$ on that component.
\end{theorem}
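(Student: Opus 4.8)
The plan is to mimic the proof of Theorem~\ref{Comparison1}, the only new ingredient being the geometry near $\partial E$; we may assume $E$ is bounded. Fix $\epsilon>0$, put $u_\epsilon=u/(1+\epsilon)$, and set
\begin{equation*}
  D_\epsilon=\bigl\{(x,t)\in E:u_\epsilon(x,t)\geq v(x,t)\bigr\}.
\end{equation*}
Since $u$ is continuous and $v$ lower semicontinuous, $u_\epsilon-v$ is upper semicontinuous, so $D_\epsilon$ is relatively closed in $E$; and since $u\leq v$ and $v>0$ on $\partial E$, semicontinuity gives $u_\epsilon\leq v/(1+\epsilon)<v$ in a neighbourhood of $\partial E$, so $D_\epsilon$ is disjoint from a neighbourhood of $\partial E$. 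Hence $\overline{D_\epsilon}\subset E$ and, by boundedness, $D_\epsilon\Subset E$. Cover $D_\epsilon$ by finitely many open space-time cylinders whose closures lie in $E$ and denote their union by $D^F_\epsilon$; as in Theorem~\ref{Comparison1} we may do this so that $D^F_{\epsilon_1}\subset D^F_{\epsilon_2}$ for $\epsilon_1>\epsilon_2$. Then $D^F_\epsilon$ is a finite union of space-time cylinders with $\overline{D^F_\epsilon}\subset E$, and, being open and containing $D_\epsilon$, it satisfies $\partial D^F_\epsilon\subset E\setminus D_\epsilon$; in particular $u_\epsilon<v$ on $\partial D^F_\epsilon$.

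On $D^F_\epsilon$ the function $u$ is a weak solution, so by the computation \eqref{uepsilon} the function $u_\epsilon$ solves, in the distributional sense,
\begin{equation*}
  (u_\epsilon)_t-\Delta (u_\epsilon)^m=f:=\frac{(1+\epsilon)^{m-1}-1}{(1+\epsilon)^m}\,\Delta u^m\qquad\text{in }D^F_\epsilon.
\end{equation*}
From here the argument is word for word that of Theorem~\ref{Comparison1}: for each $s$ one works on $D^F_{\epsilon,s}=D^F_\epsilon\cap\{t\leq s\}$, takes $\phi\in C^\infty_0(D^F_{\epsilon,s})$, replaces the test function by the solution $\psi_k$ of the backward boundary value problem \eqref{backwards_bvp} with coefficients $a_k$ regularizing $a=(u_\epsilon^m-v^m)/(u_\epsilon-v)$ as in \eqref{appconvergence}, and combines \eqref{akDeltau_2} and \eqref{psikbound} with the finiteness of $\int_{D^F_\epsilon}|\nabla u^m|^2\leq\int_E|\nabla u^m|^2$ (and the analogous bound for $v$). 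This produces \eqref{eq:comp-pf2} with $D^F_{\epsilon,s}$ in place of the set there; taking $\phi$ supported in a fixed $D^F_{\epsilon_0,s}$ and using the nesting bounds $\int_{\T(D^F_{\epsilon,s})}|\nabla\phi|^2$ independently of $\epsilon<\epsilon_0$, and letting $\epsilon\to0$ gives $\int(u-v)\phi\,\dx\leq0$ on almost every time slice. Hence $u\leq v$ a.e.\ in $E$, and then everywhere, as in Theorem~\ref{Comparison1}.

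For the last assertion let $\Gamma$ be a connected component of $\partial E$ with $\Gamma=\partial B$ for a finite union of space-time cylinders $B$, so that near $\Gamma$ the set $E$ is cylindrical. We can no longer keep $D_\epsilon$ away from $\Gamma$, but we may now take $D^F_\epsilon$ to consist of finitely many cylinders $\Subset E$ covering the part of $D_\epsilon$ away from $\Gamma$, together with finitely many cylinders each having a portion of its parabolic boundary on $\Gamma$; this is still a finite union of space-time cylinders. Then $\partial D^F_\epsilon$ decomposes into a part in $E\setminus D_\epsilon$, a part on the remaining components of $\partial E$ (where $v>0$), and a part on $\Gamma$; on the first two we still have $u_\epsilon<v$, while on the $\Gamma$-part the hypothesis only gives $u\leq v$, i.e.\ $b:=u_\epsilon-v\leq0$ and $u_\epsilon^m-v^m\leq0$. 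But this non-strict inequality is all that is used: in the passage leading to \eqref{goodinequality} the boundary integrals over $\Bo(D^F_{\epsilon,s})$ and $\Si(D^F_{\epsilon,s})$ are discarded using exactly the signs $b\leq0$, $u_\epsilon^m-v^m\leq0$ and $\partial_n\psi\leq0$. With this change the proof goes through unchanged.

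The real work here is the geometric reduction rather than the analysis. One must use the positivity of $v$ on $\partial E$ together with the semicontinuity of $u$ and $v$ to place $D_\epsilon$ compactly inside $E$, so that the finite cylinder cover --- lying inside the open set where $u$ is a solution --- exists; and, for the final statement, one must absorb the distinguished component into $\partial D^F_\epsilon$ while keeping $D^F_\epsilon$ a finite union of cylinders and while only the non-strict inequality $u\leq v$ is available there. The duality estimates themselves are identical to those in Theorem~\ref{Comparison1} and bring in nothing new.
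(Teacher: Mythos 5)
Your proposal follows exactly the route the paper intends: Theorem~\ref{Comparison2} is stated there without a separate proof, being deduced from the remark that the only essential point in Theorem~\ref{Comparison1} is that $D_\epsilon$ can be covered by a finite union of space-time cylinders lying inside the region where $u$ is a weak solution, which is precisely what your use of $v>0$ and $u\leq v$ on $\partial E$ accomplishes, and your observation that on a cylindrical boundary component only the non-strict signs $b\leq 0$ and $u_\epsilon^m-v^m\leq 0$ enter (so that $v>0$ can be dropped there, exactly as on $\partial_p\Omega_T$ in Theorem~\ref{Comparison1}) is the correct treatment of the final assertion. The only caveat is that ``we may assume $E$ is bounded'' is really an implicit hypothesis needed to make $\overline{D_\epsilon}$ compact (it is not a harmless reduction, since truncating an unbounded $E$ destroys the boundary inequality on the new boundary), but the paper's implicit argument relies on the same finiteness, so this does not distinguish your proof from theirs.
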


\section{The obstacle problem}
\label{sec:ost}

In this section, we construct solutions to the obstacle problem by a
potential theoretic method. More specifically, we call a function $u$
a solution to the obstacle problem if it is the smallest supersolution
above the given obstacle function $\psi$. 

Existence and uniqueness are fairly easily established for this notion
of solution to the obstacle problem. However, the relationship between
the variational solutions studied in \cite{BLS} and the smallest
supersolution is not immediately clear. In this direction, we apply
the comparison principle established earlier to prove that the
smallest supersolution is also a variational solution, provided that
the obstacle is sufficiently regular. This is a consequence of two
facts: first, we prove that the smallest supersolution is a point-wise
limit of variational solutions. Second, variational solutions are
stable with respect to convergence of the obstacles in a suitable
norm.

We expect that the converse is also true, i.e. that a variational
solution is the smallest supersolution . However, our version of the
comparison principle in general domains is not strong enough to prove
this.

First we describe the notion of smallest supersolution in more detail.
\begin{definition}
  Let $\psi$ be a positive, bounded measurable function in $\Omega_\infty$, and denote 
  \begin{displaymath}
    \U_{\psi}=\{v \text{ is a \lscss{} in }\Omega_\infty : v\geq \psi \text{ in }\Omega_\infty\}\,.
  \end{displaymath}
  We define the \emph{réduite} (or reduced function) of $\psi$ as
  \begin{displaymath}
    R_\psi=\inf\{v:v\in \mathcal{U}_{\psi}\}\,.
  \end{displaymath}
  For a measurable set $E$, we abbreviate $R_E=R_{\chi_E}$. We denote
  by $\widehat{R}_\psi$ (lower semicontinuous)
  $\essliminf$-regularization of $R_\psi$. The function
  $\widehat{R}_\psi$ is usually called the \emph{balayage} of $\psi$.
  
\end{definition}
The terms r\'eduite and balayage come from classical potential
theory. The notion is due to Poincaré.  We will need the following
basic theorem, for which the proof is standard, but we reproduce it
here for the reader's convenience.
\begin{theorem}\label{thm:bala-bala}
  The balayage $\widehat{R}_\psi$ is a \lscss{} in $\Omega_T$.
\end{theorem}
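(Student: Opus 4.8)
The plan is to identify $\widehat{R}_\psi$ as the canonical lower semicontinuous representative of a bounded weak supersolution and then read off the three requirements of Definition~\ref{def:viscosity-supersols} from the material of Section~\ref{sec:weaksuper}. Two of them come almost for free. The family $\U_\psi$ is nonempty, since the constant $M:=\sup_{\Omega_\infty}\psi$ is a solution of the PME, hence a \lscss{}, and $M\geq\psi$; therefore $0<R_\psi\leq M$ everywhere, so $0\leq\widehat{R}_\psi\leq M<\infty$. Lower semicontinuity of $\widehat{R}_\psi$ is automatic from the definition of the essliminf-regularization, and finiteness everywhere follows from this bound. So the whole content is condition~(3), the parabolic comparison principle.

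First I would record that $\U_\psi$ is closed under finite minima: if $v,w\in\U_\psi$, then $\min(v,w)$ is lower semicontinuous, lies above $\psi$, is finite on a dense set, and satisfies the comparison principle, because a continuous solution $h$ on $\overline{U_{t_1,t_2}}$ with $h\leq\min(v,w)$ on $\partial_p U_{t_1,t_2}$ satisfies $h\leq v$ and $h\leq w$ in $U_{t_1,t_2}$ by applying Definition~\ref{def:viscosity-supersols}(3) to $v$ and to $w$ separately. Thus $\U_\psi$ is downward directed, and the Choquet topological lemma in the separable space $\Omega_\infty$ (which replaces the pointwise infimum of a family by that of a countable subfamily without changing its essliminf-regularization) produces, after taking successive finite minima from a suitable countable subfamily and truncating above by $M$, a nonincreasing sequence $v_1\geq v_2\geq\cdots$ in $\U_\psi$ with $0\leq v_k\leq M$ and
\begin{equation*}
  \widehat{w}=\widehat{R}_\psi,\qquad w:=\inf_k v_k=\lim_{k\to\infty}v_k .
\end{equation*}
In particular $R_\psi$ has a well-defined essliminf-regularization, namely $\widehat{w}$.

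Each $v_k$, being a bounded \lscss{}, is a weak supersolution in $\Omega_\infty$ by \cite{KinnunenLindqvist2}; since the $v_k$ are uniformly bounded by $M$ and decrease to $w$ everywhere, Lemma~\ref{Convergence} shows that $w$ is a weak supersolution in $\Omega_\infty$ with $0\leq w\leq M$. By Lemma~\ref{thm:lsc:intro} and the discussion following it, the essliminf-regularization $\widehat{w}$ is a lower semicontinuous pointwise representative of $w$, hence a \lscss{}; as $\widehat{w}=\widehat{R}_\psi$, this proves the theorem. The step requiring care is the countable reduction: one must use the form of the Choquet lemma adapted to the $\essliminf$-regularization and keep the approximating minima inside $\U_\psi$ (this is where closure under minima is used) and uniformly bounded, so that Lemmas~\ref{Convergence} and \ref{thm:lsc:intro} apply; the rest is bookkeeping with the results already recorded.
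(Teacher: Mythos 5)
Your argument is correct in substance, but it takes a genuinely different --- and much heavier --- route than the paper. The paper verifies condition (3) of Definition \ref{def:viscosity-supersols} directly: if $h$ is a continuous solution on a cylinder $U_{t_1,t_2}\Subset\Omega_T$ with $h\leq\widehat R_\psi$ on $\partial_p U_{t_1,t_2}$, then $h\leq v$ on $\partial_p U_{t_1,t_2}$ for every $v\in\U_\psi$, hence $h\leq v$ in the cylinder by property (3) applied to $v$; taking the infimum over $v\in\U_\psi$ and using the continuity of $h$ (so that $h$ equals its own regularization) gives $h\leq\widehat R_\psi$ in the cylinder. That two-line comparison argument is the whole proof, and it uses no structure theory of weak supersolutions. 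You instead identify $\widehat R_\psi$ as the lsc representative of a bounded weak supersolution, via closure of $\U_\psi$ under minima and truncation by the constant $M=\sup\psi$, a Choquet-type countable reduction, the equivalence ``bounded \lscss{} $\Rightarrow$ weak supersolution'' from \cite{KinnunenLindqvist2}, Lemma \ref{Convergence}, and Lemma \ref{thm:lsc:intro}. Your route proves more than the statement --- that $\widehat R_\psi$ coincides a.e.\ with a bounded weak supersolution, hence carries a Riesz measure and energy estimates, in the spirit of Kinnunen--Lindqvist --- but at the price of leaning on those nontrivial cited results, which the paper's direct argument avoids entirely.

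One step you should prove rather than cite: the classical Choquet topological lemma controls the $\liminf$-regularization, and the analogous statement for the $\essliminf$-regularization is \emph{false} for arbitrary families (an uncountable downward directed family can dip to zero on a measure-zero set that no countable subfamily detects). It does hold here because the members of $\U_\psi$ are lower semicontinuous: for lsc functions $v_i$ the sublevel sets $\{v_i<\lambda\}$ are open, so a pointwise dip forces a dip on a set of positive measure, which yields $\essinf_B\inf_i v_i=\inf_i\essinf_B v_i$ for every space-time ball $B$; selecting, for a countable dense family of balls, members nearly attaining these essential infima and passing to successive (truncated) minima produces your decreasing sequence $v_k$ whose infimum has the same $\essliminf$-regularization as $R_\psi$. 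With that made explicit, the remaining steps (the truncation by $M\in\U_\psi$, Lemma \ref{Convergence}, and the fact recorded in Section \ref{sec:weaksuper} that the properly chosen representative of a weak supersolution is a \lscss{}) go through exactly as you describe.
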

\begin{proof}
  Pick a space-time cylinder $U_{t_1,t_2}\Subset \Omega_T$ and a weak
  solution $u$ which is continuous in $\overline{U}_{t_1,t_2}$ with
  $u\leq \widehat{R}_\psi$ on $\partial_p U_{t_1,t_2}$. Then also
  $u\leq v$ on $\partial_p U_{t_1,t_2}$ for $v\in \U_{\psi}$, and by
  comparison the same holds in $U_{t_1,t_2}$. We take the infimum over
  $v$ to get that $u\leq R_\psi$.  Since $\widehat{u}=u$ by the
  continuity of $u$, we conclude that $u\leq \widehat{R}_\psi$.
\end{proof}

\emph{Note that in general, $R_\psi$ might not be lower
  semicontinuous, and $\widehat{R}_\psi$ might not be above $\psi$ in
  every point.}  However, for \emph{continuous} $\psi$ it holds that $\widehat{R}_\psi\geq \psi$ \emph{everywhere}. This together with
Theorem \ref{thm:bala-bala} implies that $ \widehat{R}_\psi$ is the
unique smallest \lscss{} above the obstacle $\psi$. 
By the smallest
supersolution, we mean a function $u\in \mathcal{U}_{\psi}$ with the
property that
\begin{equation}
	\label{eq:smallest} u\leq v \quad \text{for all} \quad v\in \mathcal{U}_{\psi}\,. 
\end{equation}
A \lscss{} with the property \eqref{eq:smallest} is unique, if it
exists; indeed, if there are two functions $u_1,u_2\in
\mathcal{U}_{\psi}$ satisfying \eqref{eq:smallest}, then two
applications of \eqref{eq:smallest} give the inequalities $u_1\leq
u_2$ and $u_2\leq u_1$, so that $u_1=u_2$. 

The next aim is to relate the smallest supersolution to the
variational solutions to the obstacle problem constructed in
\cite{BLS}.  We first recall some facts from \cite{BLS}.

We consider nonnegative obstacle functions $\psi$ defined on
$\Omega_T$, with compact support and satisfying
\begin{equation}
	\label{obstacle} \psi^m\in L^2\big(0,T;H^{1}_0(\Omega)\big) ,\quad 
	\partial_t(\psi^m)\in L^{\frac{m+1}{m}}(\Omega_T)\,. 
\end{equation}
The class of admissible functions for the obstacle problem is defined by 
\begin{align*}
	K_{\psi}(\Omega_T) := \big\{v\,:\,\Omega_T\to [0,\infty]\,:\, v^m\in L^2(0,T;H^{1}_0(\Omega))\,, v\ge\psi\mbox{ a.e. on $\Omega_T$}\big\}\,. 
\end{align*}
Note that $\psi\in K_{\psi}$, and therefore $K_{\psi} \neq \emptyset$.

With the above classes, we can state the definition of a strong
solution to the obstacle problem.
\begin{definition}
  \label{def:strong} A nonnegative function $u\in
  K_{\psi}(\Omega_T)$ is a {\it strong solution to the obstacle
    problem for the porous medium equation} if $
	\partial_t u\in L^2(0,T;H^{-1}(\Omega))$ and 
	\begin{equation*} %\label{eq:strong} 
		  \int_0^T\inprod{\partial_t u,\alpha(v^m-u^m)}\dt+ \int_{\Omega_T}\alpha \nabla u^m\spcdot \nabla(v^m-u^m)\de z \geq 0\,,
	\end{equation*}
	holds for all comparison maps $v\in K_{\psi}(\Omega_T)$ and every Lipschitz continuous cut-off function $\alpha:[0,T]\to [0,\infty]$ with $\alpha(T)=0$.
\end{definition}
The cutoff function $\alpha$ is needed for making this definition
consistent with the definition of weak solutions to the obstacle
problem, which we will recall later.

For the existence of strong solutions, we still need the assumption 
\begin{equation}
	\label{Psi} \Psi:=
	\partial_t\psi-\Delta \psi^m\in L^\infty(\Omega_T)\,. 
\end{equation}

The following result can be extracted from \cite[Theorem~2.6]{BLS}.
\begin{theorem}
  \label{thm:pme-obstacle} Let $\Omega$ be a bounded open subset of
  $\R^n$ with a smooth boundary.   Assume that the obstacle
  $\psi$ satisfies the regularity conditions \eqref{obstacle} and
  \eqref{Psi}. Then there exists a strong solution $u$ to the obstacle
  problem for the PME in the sense of Definition
  \ref{def:strong} satisfying $u^m\in L^2(0,T;H^1_0(\Omega))$ and
  $u(\spcdot,0)=0$.
	
  The function $u$ is also locally H\"older continuous, and satisfies
  $u\geq \psi$ everywhere in $\Omega_T$. Further, $u$ is a weak
  supersolution to the porous medium equation in $\Omega_T$, and a
  weak solution in the open set $\{z\in \Omega_T:u(z)>\psi(z)\}$.
\end{theorem}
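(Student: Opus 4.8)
The plan is to assemble the stated conclusions from the pieces established in \cite{BLS}, to which this theorem is essentially a reference. First I would invoke the existence theory for the \emph{weak} (time-integrated) solution of the obstacle problem under the structural assumptions \eqref{obstacle}: this produces a function $u\in K_\psi(\Omega_T)$ with $u(\spcdot,0)=0$ solving the variational inequality, and the energy estimates of \cite{BLS} give $u^m\in L^2(0,T;H^1_0(\Omega))$. The next step is to upgrade this to a \emph{strong} solution in the sense of Definition \ref{def:strong}; this is precisely where the extra hypothesis \eqref{Psi} is used. The standard route is a penalization/regularization scheme: approximate $\psi$ by smooth obstacles, solve the penalized Cauchy--Dirichlet problems, and use the bound $\Psi\in L^\infty(\Omega_T)$ to obtain a uniform $L^2(0,T;H^{-1}(\Omega))$ estimate on $\partial_t u$, which survives the limit and yields the strong formulation. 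I would cite \cite[Theorem~2.6]{BLS} for this passage.

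Second, I would treat the qualitative properties. On the non-contact set $\{u>\psi\}$ the function $u$ is a bounded weak solution of the PME, hence locally H\"older continuous by the regularity theory (\cite{DiBeFried}, \cite{DaskalopoulosKenig}, etc.), while on the contact set $u$ coincides with the continuous $\psi$; gluing the two gives local H\"older continuity throughout, which is exactly the continuity statement in \cite[Theorem~2.6]{BLS}. Once $u$ is known to be continuous, the a.e. inequality $u\geq\psi$ improves to $u\geq\psi$ \emph{everywhere}, since both sides are continuous.

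Third, to see that $u$ is a weak supersolution in all of $\Omega_T$, test the variational inequality with comparison maps of the form $v^m=u^m+\varphi$ for nonnegative $\varphi\in C_0^\infty(\Omega_T)$ (these are admissible, since $v\geq u\geq\psi$), and let the cutoff $\alpha$ increase to the constant $1$; using $\partial_t u\in L^2(0,T;H^{-1}(\Omega))$ and $u(\spcdot,0)=0$ to handle the initial term, this yields $\int_{\Omega_T}-u\varphi_t+\nabla u^m\spcdot\nabla\varphi\dxdt\geq 0$, i.e. the weak supersolution inequality. In the open set $\{u>\psi\}$ one tests instead with $v^m=u^m\pm\varphi$ for $\varphi\in C_0^\infty(\{u>\psi\})$ small enough that $v\geq\psi$ still holds; this is possible because $u-\psi$ is lower semicontinuous and strictly positive there, hence bounded below by a positive constant on each compact subset. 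Getting both signs gives that $u$ is a weak solution on $\{u>\psi\}$.

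The main obstacle I expect is the matching of the various notions of solution in \cite{BLS}: one must be careful that the weak-to-strong upgrade produces a $\partial_t u\in L^2(0,T;H^{-1}(\Omega))$ compatible with the continuous representative, that the cutoff $\alpha$ may be removed in the directions needed above without destroying the initial condition, and that the H\"older continuity survives across the free boundary. Since all of this is contained in \cite[Theorem~2.6]{BLS}, the proof amounts to citing that result and recording the admissible test-function choices that extract the supersolution and solution properties.
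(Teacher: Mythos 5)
Your proposal matches the paper exactly in spirit: the paper gives no proof of its own but simply states that the result ``can be extracted from \cite[Theorem~2.6]{BLS}'', which is precisely what you do, and your supplementary test-function arguments (taking $v^m=u^m+\varphi$ for the supersolution property and $v^m=u^m\pm\epsilon\varphi$ on compact subsets of $\{u>\psi\}$ for the solution property) are the standard and correct way to extract those claims from the variational inequality. The only slight overreach is invoking continuity of $\psi$ for the ``gluing'' and the a.e.-to-everywhere step, which is not guaranteed by \eqref{obstacle}--\eqref{Psi} alone, but since you defer to \cite{BLS} for these conclusions anyway this does not affect the argument.
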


We now wish to show that $u$ in Theorem \ref{thm:pme-obstacle} is a weak solution in the larger set
\begin{equation} \nonumber \label{}
	[\Omega_T \setminus \supp(\psi)]\cup\{z\in \Omega_T:u(z)>\psi(z)\}\,.
\end{equation}
With this in mind we recall the following form of a partition of unity.

\begin{lemma}
  [Partition of Unity]\label{partition} Let $U_1,U_2,\ldots,U_n$ be
  open sets, and let $K$ be a compact set such that $K\subset U_1\cup
  U_2\cup\cdots\cup U_n$. Then there exist functions $\eta_i\in
  C_0^\infty(U_i)$ such that
  \begin{displaymath}
    \sum_{i=1}^n \eta_i=1 \quad \text{on}\quad K\,. 
  \end{displaymath}
\end{lemma}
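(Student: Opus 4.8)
The plan is to follow the classical inductive construction of a smooth partition of unity subordinate to a finite open cover. The first step is to \emph{shrink} the cover: for each point $x\in K$ choose an index $i(x)$ with $x\in U_{i(x)}$ together with a radius $r_x>0$ so that the closed ball $\overline{B(x,r_x)}\subset U_{i(x)}$. Since $K$ is compact, finitely many of the balls $B(x,r_x)$ already cover $K$; collecting, for each fixed $i$, those closed balls from this finite subcollection whose center has $i(x)=i$, we obtain a compact set $K_i\subset U_i$ (possibly empty) with $K\subset K_1\cup\cdots\cup K_n$.

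The second step is to produce smooth bump functions adapted to this shrunk cover. For each $i$, since $K_i$ is compact, $U_i$ is open, and $K_i\subset U_i$, there exists $\zeta_i\in C_0^\infty(U_i)$ with $0\le\zeta_i\le 1$ and $\zeta_i\equiv 1$ on an open neighborhood of $K_i$; this is the standard smooth Urysohn lemma, obtained for instance by mollifying the indicator function of a slightly enlarged copy of $K_i$ at a sufficiently small scale. (If $K_i=\emptyset$, take $\zeta_i\equiv 0$.)

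The third step is the telescoping definition
\begin{displaymath}
  \eta_1=\zeta_1,\qquad \eta_i=\zeta_i\prod_{j=1}^{i-1}(1-\zeta_j)\quad(2\le i\le n)\,.
\end{displaymath}
Each $\eta_i$ is a finite product of smooth functions and $\supp\eta_i\subset\supp\zeta_i\subset U_i$, so $\eta_i\in C_0^\infty(U_i)$. A straightforward induction on $n$ gives the identity
\begin{displaymath}
  \sum_{i=1}^n\eta_i=1-\prod_{i=1}^n(1-\zeta_i)\,.
\end{displaymath}
If $z\in K$, then $z\in K_i$ for some $i$, hence $\zeta_i(z)=1$, so the product on the right vanishes at $z$ and $\sum_{i=1}^n\eta_i(z)=1$. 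This establishes the claim.

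The argument is entirely routine; the only point requiring a little care is the construction of the cutoff $\zeta_i$ that is smooth, identically $1$ near $K_i$, and compactly supported in $U_i$ — which is precisely why the shrinking step producing the compact sets $K_i$ with $K\subset\bigcup_i K_i$ is performed first, so that there is genuine "room" between $K_i$ and $\partial U_i$ in which to mollify.
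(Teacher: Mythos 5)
Your proof is correct, and it takes a genuinely different (and more self-contained) route than the paper. The paper does not construct anything: it cites the continuous version of the statement (Rudin, \emph{Real and Complex Analysis}, Theorem 2.13) and remarks that the smooth case ``follows easily'' by mollification. You instead give the classical direct construction: shrink the cover to compact sets $K_i\subset U_i$ with $K\subset\bigcup_i K_i$, invoke the smooth Urysohn lemma to get $\zeta_i\in C_0^\infty(U_i)$ with $\zeta_i\equiv 1$ near $K_i$, and then use the telescoping products $\eta_i=\zeta_i\prod_{j<i}(1-\zeta_j)$, whose sum is $1-\prod_i(1-\zeta_i)$ and hence equals $1$ on $K$. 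All steps check out, and your construction even yields the extra properties $0\le\eta_i\le 1$ and $\sum_i\eta_i\le 1$ everywhere, which the statement does not require. What each approach buys: the paper's citation-plus-mollification is shorter, but the mollification step is slightly less immediate than advertised, since Rudin's theorem only gives $\sum_i h_i=1$ \emph{on} $K$, so to preserve the identity under convolution one should first apply the continuous result to a compact neighborhood of $K$ (or otherwise ensure the sum is $1$ on a neighborhood) and then mollify at a sufficiently small scale, also checking that the mollified supports remain in $U_i$; your argument sidesteps these points entirely at the cost of writing out the standard telescoping construction.
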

\begin{proof}
  For a version where the functions $\eta_i$ are continuous, see
  \cite[Theorem 2.13, p. 40]{RCA}. The fact that one may also choose
  smooth functions follows easily from the continuous version by
  applying a suitable mollification.
\end{proof}
\begin{lemma}
  \label{SuppLem} The strong solution to the obstacle problem given by
  Theorem~\ref{thm:pme-obstacle} is also a weak solution to the PME in
  the set $\Omega_T\setminus \supp(\psi)$.
\end{lemma}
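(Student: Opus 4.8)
The plan is to show that $u$ is a weak \emph{sub}solution in $V:=\Omega_T\setminus\supp(\psi)$; since $u$ is a weak supersolution in all of $\Omega_T$ by Theorem \ref{thm:pme-obstacle}, this yields that $u$ is a weak solution in $V$. Thus, fixing a nonnegative $\varphi\in C_0^\infty(V)$, it suffices to establish
\[
\int_{\Omega_T}\big[-u\,\partial_t\varphi+\nabla u^m\spcdot\nabla\varphi\big]\dx\dt\leq 0,
\]
the reverse inequality being the supersolution property. The guiding idea is that on $V$ the obstacle vanishes identically ($\psi\equiv 0$ there), so the only constraint felt from the obstacle problem near $\supp\varphi$ is $v\geq 0$, and $u$ should behave like an unconstrained solution up to an error coming from that one-sided constraint.

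To make this precise I would test the variational inequality of Definition \ref{def:strong} with the comparison map $v$ defined by $v^m=\max(u^m-\epsilon\varphi,0)$, $\epsilon>0$ small. One checks $v\in K_\psi(\Omega_T)$: $v^m\in L^2(0,T;H^1_0(\Omega))$ as a truncation of $u^m-\epsilon\varphi$, and $v\geq\psi$ a.e.\ because $\psi=0$ on $\supp\varphi\subset V$ while $v=u$ off $\supp\varphi$. Choosing $t'<T$ with $\supp\varphi\subset\Omega\times(0,t')$ and a Lipschitz cut-off $\alpha$ with $\alpha\equiv 1$ on $[0,t']$ and $\alpha(T)=0$, the cut-off is inert on the support of $g:=u^m-v^m=\min(u^m,\epsilon\varphi)$, and the variational inequality collapses to
\[
\int_0^T\inprod{\partial_t u,g}\dt+\int_{\Omega_T}\nabla u^m\spcdot\nabla g\dx\dt\leq 0.
\]
Writing $g=\epsilon\varphi-h$ with $h:=(\epsilon\varphi-u^m)_+\geq 0$ and rearranging gives
\[
\epsilon\Big(\int_0^T\inprod{\partial_t u,\varphi}\dt+\int_{\Omega_T}\nabla u^m\spcdot\nabla\varphi\dx\dt\Big)\leq\int_0^T\inprod{\partial_t u,h}\dt+\int_{\Omega_T}\nabla u^m\spcdot\nabla h\dx\dt,
\]
so everything reduces to showing the right-hand side is $o(\epsilon)$ as $\epsilon\to 0$.

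For the gradient term, on $\{u^m<\epsilon\varphi\}$ one has $\nabla h=\epsilon\nabla\varphi-\nabla u^m$ a.e., so $\int_{\Omega_T}\nabla u^m\spcdot\nabla h\dx\dt\leq\epsilon\int_{\{u^m<\epsilon\varphi\}}\nabla u^m\spcdot\nabla\varphi\dx\dt$, and this last integral tends to $0$ by dominated convergence, since $\{u^m<\epsilon\varphi\}$ decreases to $\{u^m=0,\,\varphi>0\}$, on which $\nabla u^m=0$ a.e. For the time term I would introduce the primitive $\Phi(z,r)=\int_0^r(\epsilon\varphi(z)-s^m)_+\de s$, so that $\partial_r\Phi(z,u)=h$; a standard chain-rule / Steklov-averaging argument (as used for the PME in \cite{BLS,VazquezBook}) then yields
\[
\int_0^T\inprod{\partial_t u,h}\dt=\Big[\int_\Omega\Phi(z,u)\dx\Big]_{t=0}^{t=T}-\epsilon\int_{\Omega_T}\partial_t\varphi\,\min\!\big(u,(\epsilon\varphi)^{1/m}\big)\dx\dt.
\]
The boundary terms vanish because $u(\spcdot,0)=0$ and $\varphi(\spcdot,T)=0$, and the remaining term is bounded by $\epsilon\,\norm{\partial_t\varphi}_{L^1(\Omega_T)}(\epsilon\norm{\varphi}_\infty)^{1/m}=C\epsilon^{1+1/m}=o(\epsilon)$. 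Dividing by $\epsilon$ and letting $\epsilon\to 0$ gives $\int_0^T\inprod{\partial_t u,\varphi}\dt+\int_{\Omega_T}\nabla u^m\spcdot\nabla\varphi\dx\dt\leq 0$, and an integration by parts in time (legitimate since $\varphi$ has compact support in $\Omega_T$) rewrites the first term as $-\int_{\Omega_T}u\,\partial_t\varphi\dx\dt$, which finishes the proof. The main technical obstacle is this time term: justifying the chain-rule identity for $\partial_t u\in L^2(0,T;H^{-1}(\Omega))$ tested against the truncation $h$, and extracting the gain $\epsilon^{1+1/m}$; by contrast, the bookkeeping with the cut-off $\alpha$, the verification that $v\in K_\psi$, and the gradient estimate are routine.
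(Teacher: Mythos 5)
Your argument is correct, but it is a genuinely different proof from the one in the paper. The paper's proof is extrinsic: it recalls that the strong solution of Theorem~\ref{thm:pme-obstacle} is constructed in \cite{BLS} as the uniform limit of solutions $u_\delta$ of the penalized equations $\partial_t u_\delta-\Delta u_\delta^m=\eta_\delta(\psi^m-u_\delta^m)(\partial_t\psi-\Delta\psi^m)_+$, and simply observes that the penalty term vanishes identically in $\Omega_T\setminus\supp(\psi)$, so the claim follows by passing to the limit. You instead argue intrinsically from Definition~\ref{def:strong}, using the classical ``the equation holds where the constraint is inactive'' device: the one-sided comparison map $v^m=\max(u^m-\epsilon\varphi,0)$ (admissible since $\psi\equiv0$ on $\supp\varphi$ and the truncation preserves nonnegativity and $H^1_0$), the decomposition $u^m-v^m=\epsilon\varphi-(\epsilon\varphi-u^m)_+$, and $o(\epsilon)$ estimates for the error terms, which are correct: the gradient error uses $\nabla u^m=0$ a.e.\ on $\{u^m=0\}$ and dominated convergence, and the time error gains $\epsilon^{1+1/m}$ via the Alt--Luckhaus/Steklov chain rule \cite{AltLuckhaus}, which is exactly the technical point handled in \cite[Lemma~3.2]{BLS}. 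Two minor remarks: the boundary terms in your chain-rule identity vanish already because $\varphi$ is compactly supported in time (no need to invoke $u(\spcdot,0)=0$ or $\varphi(\spcdot,T)=0$), and combining the resulting subsolution inequality with the supersolution property from Theorem~\ref{thm:pme-obstacle} does give the solution property for arbitrary test functions by the usual splitting into nonnegative parts. The trade-off between the two approaches: the paper's argument is two lines but applies only to the particular solution produced by the approximation scheme of \cite{BLS}, whereas yours proves the statement for \emph{any} strong solution in the sense of Definition~\ref{def:strong}, at the cost of the (standard but nontrivial) chain-rule lemma for the time pairing, which you correctly identify as the main technical hurdle and which should be cited rather than treated as obvious.
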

\begin{proof}
  Let $\delta>0$ be a number, and let $\eta_\delta:\R\to [0,1]$ be a
  Lipschitz function with $\eta(s)=0$ for $s\leq -\delta$, $\eta(s)=1$
  for $s\geq 0$, and $\abs{\eta'(s)}\leq 1/\delta$. The solution $u$
  is constructed in \cite{BLS} as the uniform limit as $\delta\to 0$
  of solutions to
  \begin{displaymath}
    \partial_t u_\delta -\Delta u^m_\delta=\eta_\delta(\psi^m-u_\delta^m) (
    \partial_t \psi-\Delta \psi^m)_+ \,. 
  \end{displaymath}
  The claim now follows from the fact that $(
  \partial_t \psi-\Delta \psi^m)_+=0$ in $\Omega_T\setminus \supp(\psi)$. 
\end{proof}

\begin{theorem}
  \label{lem:obstacle} Let $\psi$ be a nonnegative, compactly
  supported function satisfying the regularity assumptions
  \eqref{obstacle} and \eqref{Psi}, let $u$ be the strong solution to the obstacle problem given by
  Theorem~\ref{thm:pme-obstacle} with obstacle $\psi$, and denote
  $K=\supp(\psi)\cap\{u=\psi\}$. Then $u$ is a weak solution in
  $\Omega_\infty\setminus K$.
\end{theorem}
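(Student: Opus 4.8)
The plan is to combine the two local weak-solution facts we already have into a single statement by a partition of unity argument, exactly in the spirit of Lemma \ref{SuppLem}. By Theorem \ref{thm:pme-obstacle}, $u$ is a weak solution in the open set $V:=\{z\in\Omega_\infty:u(z)>\psi(z)\}$, and by Lemma \ref{SuppLem} it is a weak solution in the open set $W:=\Omega_\infty\setminus\supp(\psi)$. The key set-theoretic observation is that
\begin{displaymath}
	\Omega_\infty\setminus K=\Omega_\infty\setminus\big(\supp(\psi)\cap\{u=\psi\}\big)=\big(\Omega_\infty\setminus\supp(\psi)\big)\cup\big(\Omega_\infty\setminus\{u=\psi\}\big)=W\cup V\,,
\end{displaymath}
where in the last step I use that $u\geq\psi$ everywhere (Theorem \ref{thm:pme-obstacle}), so that $\Omega_\infty\setminus\{u=\psi\}=\{u>\psi\}=V$. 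Thus $\Omega_\infty\setminus K$ is covered by the two open sets $W$ and $V$, on each of which $u$ is a weak solution.

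First I would fix an arbitrary nonnegative test function $\varphi\in C_0^\infty(\Omega_\infty\setminus K)$ and let $\mathcal{K}:=\supp(\varphi)$, a compact subset of $W\cup V$. By Lemma \ref{partition}, there are $\eta_1\in C_0^\infty(W)$ and $\eta_2\in C_0^\infty(V)$ with $\eta_1+\eta_2=1$ on $\mathcal{K}$, hence $\varphi=\eta_1\varphi+\eta_2\varphi$ with $\eta_1\varphi\in C_0^\infty(W)$ and $\eta_2\varphi\in C_0^\infty(V)$. Testing the weak formulation (Definition \ref{def:local-weak}) satisfied by $u$ in $W$ against $\eta_1\varphi$ and the one satisfied in $V$ against $\eta_2\varphi$, and adding, gives
\begin{displaymath}
	\int_{\Omega_\infty}-u\,\partial_t\varphi+\nabla u^m\spcdot\nabla\varphi\dx\dt=0\,,
\end{displaymath}
since both the super- and the subsolution inequalities hold on each piece (equality, as $u$ is a solution there). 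As the regularity requirement $u^m\in L^2_{loc}(0,T;H^1_{loc})$ on $\Omega_\infty\setminus K$ is inherited from the two covering pieces, this shows $u$ is a weak solution in $\Omega_\infty\setminus K$.

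The main thing to be careful about — though it is routine rather than deep — is the bookkeeping that $\varphi$ is only required to be smooth and compactly supported in the (possibly irregular) open set $\Omega_\infty\setminus K$, so one should not assume anything about the geometry of $K$; the partition of unity handles this cleanly because it only uses compactness of $\supp(\varphi)$ and that it lies in a union of two open sets. A secondary point is that one must check the identity $\Omega_\infty\setminus K=W\cup V$ uses the \emph{everywhere} inequality $u\geq\psi$ rather than merely an a.e.\ statement; this is exactly what Theorem \ref{thm:pme-obstacle} provides. No new analytic estimates are needed beyond what is already recorded.
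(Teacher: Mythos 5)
Your proposal is correct and is essentially the paper's own argument: the same decomposition of $\Omega_\infty\setminus K$ into $\Omega_\infty\setminus\supp(\psi)$ and $\{u>\psi\}$ (via Lemma \ref{SuppLem} and Theorem \ref{thm:pme-obstacle}), followed by the same partition-of-unity splitting of an arbitrary test function. Your explicit remark that the set identity uses the everywhere inequality $u\geq\psi$ is a nice touch, but it does not change the route.
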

\begin{proof}
  We first show that the function $u$ is a weak solution in
  $\Omega_\infty \setminus K$. Denote $U_1=\Omega_\infty
  \setminus\supp(\psi)$ and $U_2=\{u>\psi\}$. These sets are open, and
	\begin{displaymath}
		\Omega_\infty \setminus K=U_1\cup U_2 \,. 
	\end{displaymath}
	Further, $u$ is a weak solution in $U_1$ and in $U_2$. The
        claim concerning the set $U_1$ is Lemma \ref{SuppLem}, and the
        claim about $U_2$ is a part of Theorem
        \ref{thm:pme-obstacle}. To show that $u$ is a solution also in
        $U_1\cup U_2$, let $\varphi\in C_0^\infty(U_1\cup U_2)$. An
        application of Lemma \ref{partition} shows that there are
        functions $\eta_i\in C_0^\infty(U_i)$, $i=1,2$, such that
        $\eta_1+\eta_2=1$ on the support of $\varphi$. By applying the
        fact that $u$ is a weak solution in $U_1$ and in $U_2$, we get
	\begin{displaymath}
		\int_{\Omega_\infty}-u
		\partial_t \varphi+\nabla u^m\spcdot \nabla\varphi\dxdt= \sum_{i=1}^2\int_{\Omega_\infty}-u
		\partial_t(\varphi\eta_i) +\nabla u^m\spcdot \nabla(\varphi\eta_i)\dxdt=0 \,.
	\end{displaymath}
	Since this holds for any test function $\varphi$, $u$ is a
        weak solution in $U_1\cup U_2$.
\end{proof}	

We are now ready to proceed with the approximation result.
\begin{theorem}\label{thm:approxpox}
  Let $\psi$ be continuous and compactly supported in $\Omega_T$.
  Then the smallest supersolution $\widehat{R}_{\psi}$ is an
  increasing limit of strong solutions $w_j$ to the obstacle problem
  with smooth compactly supported obstacles $\phi_j$ increasing to
  $\psi$.
\end{theorem}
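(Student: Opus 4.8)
The plan is to build the approximating obstacles $\phi_j$ by mollifying $\psi$ and then cutting off. Since $\psi$ is continuous and compactly supported in $\Omega_T$, choose a sequence $\phi_j\in C_0^\infty(\Omega_T)$ with $\phi_j\uparrow\psi$ pointwise in $\Omega_T$ and with $\supp\phi_j$ contained in a fixed compact subset of $\Omega_T$; this is possible by a standard construction (take $\phi_j$ to be a slightly shrunk-and-lowered mollification of $\psi$, and use the uniform continuity of $\psi$ to get both the monotonicity and $\phi_j\to\psi$ uniformly). The smooth obstacles $\phi_j$ trivially satisfy the regularity conditions \eqref{obstacle} and \eqref{Psi}, so Theorem~\ref{thm:pme-obstacle} produces strong solutions $w_j$ to the obstacle problem for $\phi_j$, each of which is a continuous weak supersolution in $\Omega_T$ with $w_j^m\in L^2(0,T;H^1_0(\Omega))$, $w_j\ge\phi_j$ everywhere, and $w_j(\spcdot,0)=0$.

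Next I would show that $(w_j)$ is increasing. The monotone dependence of the strong solution on the obstacle should follow from the comparison principle for weak supersolutions (Lemma~\ref{ComparisonPrinciple}) applied to $w_j$ and $w_{j+1}$: since $\phi_{j}\le\phi_{j+1}$, the solution $w_{j+1}$ lies in the admissible class $K_{\phi_j}(\Omega_T)$ as well, and one compares Riesz measures — where $w_j>\phi_j$ the measure $\mu_{w_j}$ vanishes, and where $w_j=\phi_j\le\phi_{j+1}\le w_{j+1}$ there is nothing to compare upward; the duality argument then yields $w_j\le w_{j+1}$. (Alternatively one invokes the known monotonicity of the obstacle-problem map from \cite{BLS}.) Being increasing and uniformly bounded (by $\sup\psi$, say, via comparison with a supersolution that dominates all the obstacles), the $w_j$ converge pointwise to some function $w$, which by Lemma~\ref{Convergence} is again a weak supersolution in $\Omega_T$, and which is lower semicontinuous after passing to its lsc representative; moreover $w\ge\lim\phi_j=\psi$ everywhere, so $w\in\U_\psi$ and hence $w\ge\widehat{R}_\psi$.

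The main obstacle is the reverse inequality $w\le\widehat{R}_\psi$, i.e.\ that the limit of the variational solutions is not larger than the potential-theoretic réduite. For this I would show directly that $w\le v$ for every $v\in\U_\psi$, which gives $w\le R_\psi$ and then $w\le\widehat{R}_\psi$ since $w$ is already lsc (so $w=\widehat w\le\widehat{R}_\psi$). Fix $v\in\U_\psi$ and a space-time cylinder $U_{t_1,t_2}\Subset\Omega_T$: I want to compare $w_j$ with $v$ on $U_{t_1,t_2}$. The point is that $w_j$ is a weak \emph{solution} in $\Omega_T\setminus K_j$ where $K_j=\supp(\phi_j)\cap\{w_j=\phi_j\}$ (Theorem~\ref{lem:obstacle}), and on $K_j$ one has $w_j=\phi_j\le\psi\le v$, while the initial/lateral data of $w_j$ is $0\le v$; a truncation $v_\delta=v+\delta$ makes $v_\delta>0$ on $K_j$ so that Theorem~\ref{Comparison1} (or Theorem~\ref{Comparison2}) applies on $\Omega_T$ with the compact set $K_j$, yielding $w_j\le v+\delta$, hence $w_j\le v$ on $\Omega_T$; letting $j\to\infty$ gives $w\le v$. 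The delicate points to get right are that $K_j$ is genuinely compact inside $\Omega_T$ (which needs $\supp\phi_j$ to stay away from $\partial_p\Omega_T$ and the continuity of $w_j$), that $v$ can be taken with $v^m$ in the right space locally so that the hypotheses of the comparison theorem are met — one works on a slightly smaller cylinder and uses that a \lscss{} is locally bounded below, applying Theorem~\ref{Comparison1} on that cylinder rather than on all of $\Omega_T$ — and that the bound $w_j\le\sup\psi$ is uniform in $j$. Combining the two inequalities gives $w=\widehat{R}_\psi$, which is the assertion.
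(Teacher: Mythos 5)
Your overall strategy (smooth obstacles increasing to $\psi$, strong solutions $w_j$ via Theorem~\ref{thm:pme-obstacle}, then comparison through Theorem~\ref{Comparison1} and Theorem~\ref{lem:obstacle} on the complement of the contact set) is the same as the paper's, but the step on which everything hinges is wrong. To verify the hypothesis ``$v>0$ on the compact set'' of Theorem~\ref{Comparison1} you replace $v$ by $v+\delta$ and claim the comparison yields $w_j\le v+\delta$. For the PME with $m>1$, adding a constant to a supersolution does \emph{not} produce a supersolution (one would need $\Delta\bigl[(v+\delta)^m-v^m\bigr]\le 0$, which has no sign in general); this failure of translation invariance is precisely the obstruction that the multiplicative perturbation $u/(1+\epsilon)$ inside the proof of Theorem~\ref{Comparison1} is designed to circumvent. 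So Theorem~\ref{Comparison1} cannot be applied to the pair $(w_j,\,v+\delta)$, and the inequality $w_j\le v$ is not established. The paper avoids this by building the positivity into the obstacles themselves: it takes $\phi_j=f_j^2$ with $(\sqrt{\psi}-1/\sqrt{j})_+\le f_j\le(\sqrt{\psi}-1/\sqrt{j+1})_+$ and $f_j\in C_0^\infty$ supported in $\overline{\{\psi>1/(j+1)\}}$, so that the contact set $K=\supp\phi_j\cap\{w_j=\phi_j\}$ lies where $\psi>1/(j+1)$, hence $\widehat{R}_\psi\ge\psi>1/(j+1)>0$ there (continuity of $\psi$), and it compares $w_j$ directly with $\widehat{R}_\psi$ rather than with an arbitrary $v\in\U_\psi$. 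Your generic ``shrunk-and-lowered mollification'' does not by itself locate $\supp\phi_j$ inside a region where $\psi$ is bounded below by a positive constant; moreover, comparing with an arbitrary $v\in\U_\psi$ also clashes with the other hypotheses of Theorem~\ref{Comparison1} ($v$ a weak supersolution with $v^m\in L^2(0,T;H^1(\Omega))$), which a general \lscss{} need not satisfy, whereas $\widehat{R}_\psi$ is bounded (by the constant supersolution $\sup\psi$) and hence a weak supersolution.

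Two further points need repair. First, your monotonicity argument invokes Lemma~\ref{ComparisonPrinciple}, whose hypothesis is the global domination $\mu_{w_j}\le\mu_{w_{j+1}}$ of Riesz measures; knowing where $\mu_{w_j}$ is supported does not give this. The paper instead obtains $w_j\le w_{j+1}$ by the same Theorem~\ref{Comparison1} device: on the contact set of $w_j$ one has $w_j=\phi_j<\phi_{j+1}\le w_{j+1}$ and $w_{j+1}\ge\phi_{j+1}>0$ there, thanks to the strict ordering built into the construction. Second, the claim that smooth compactly supported $\phi_j$ ``trivially'' satisfy \eqref{obstacle} and \eqref{Psi} is glib: for $1<m<2$ the function $\phi^m$ of a nonnegative smooth $\phi$ is not obviously twice differentiable with bounded Laplacian where $\phi$ vanishes; this is exactly why the paper squares, so that $\phi_j^m=f_j^{2m}\in C_0^2$ and \eqref{obstacle}, \eqref{Psi} hold at once. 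The remaining elements of your plan (uniform bound, lower semicontinuity of the increasing limit, $w\ge\psi$ giving $R_\psi\le w$) are in line with the paper, but without a correct substitute for the $v+\delta$ step the inequality $w\le\widehat{R}_\psi$ is not proved.
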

\begin{proof}
	Let $U_j=\{\psi>1/j\}$ for $j=1,2,\ldots,$ $K_j=\overline{U}_j$. %, and pick open sets $V_j$ such that $K_j\subset V_j\Subset U_{j+1}$. 
	First note that $K_j$ and $K_{j+1}$ have a positive distance
        between them. Thus if we let $h_j = (\sqrt{\psi}-1/\sqrt{j})_+$, we see that $h_{j+1}-h_j$ is strictly positive in $K_{j}$. By a mollification argument it can easily be seen that for each $j=1,\ldots$ there exists a function $f_j \in C_0^\infty(K_{j+1})$ such that
	\begin{equation} \nonumber \label{}
		h_j \leq f_j \leq h_{j+1}\,.
	\end{equation}
	Taking $\phi_j = f_j^2$, $j=1,\ldots$, we immediately see that $\phi_j^m \in C_0^2(K_{j+1})$, thus it satisfies \eqref{obstacle} and \eqref{Psi}. Moreover by construction we get
	\begin{equation} \nonumber \label{}
		\phi_1 < \phi_2 < \ldots < \psi, \quad \phi_j \to \psi \text{ as } j \to \infty\,.
	\end{equation}
	Let $w_j$ be the strong solutions to the $\phi_j$-obstacle
        problems. Since $\widehat{R}_\psi\geq \psi$ by the continuity
        of $\psi$, we have $w_j<\widehat{R}_\psi$ on
        $K=\partial(\{w_j=\phi_j\}\cap \supp\phi_j)$. Also note that
        $K \subset U_{j+1}$, whence $\widehat{R}_{\psi} >
        \frac{1}{j+1} > 0$ on $K$.  This allows us to use the
        comparison principle of Theorem \ref{Comparison1} together
        with Theorem \ref{lem:obstacle} to get that
        $w_j\leq\widehat{R}_\psi$. A similar argument shows that
        $w_j\leq w_{j+1}$. Thus $w=\lim_{j\to \infty} w_j$ is a
        \lscss{} as an increasing limit of continuous supersolutions,
        and $w\leq \widehat{R}_\psi$. To finish the proof, we have
        that $w\geq \psi$ everywhere in $\Omega_T$, whence $R_\psi\leq
        w$. Thus
	\begin{equation*}
		w\leq \widehat{R}_\psi\leq R_\psi \leq w\,,
	\end{equation*}
	and the proof is complete. 
\end{proof}

The final step is to combine the approximation result with a stability
result for variational solutions to conclude that the smallest
supersolution is also a variational solution.  We recall some more
facts from \cite{BLS}, in particular the notion of a weak variational
solution, for which stability with respect to the obstacles can be
established.

For the notion of weak solutions, we use the class of admissible
comparison functions 
\begin{equation} \nonumber \label{}
	K_{\psi}'(\Omega_T) = \big\{v\in K_{\psi}(\Omega_T): \partial_t(v^m)\in L^{\frac{m+1}{m}}(\Omega_T)\big\}\,.	
\end{equation}
We need to make sense of the time term in the variational inequality
when we do not know that $\partial_t u$ belongs to the dual of the
parabolic Sobolev space. We do this as in \cite{AltLuckhaus} and
\cite{BLS}. We recall the notation
\begin{align*}
  \dinprod{
    \partial_t u,\alpha\eta(v^m-u^m)}_{u_0}=&\int_{\Omega_T}\eta\left[\alpha'\left[\frac{1}{m+1}u^{m+1}-u v^m\right]-\alpha u
    \partial_t v^m\right]\dxdt\\
	& +\alpha(0)\int_{\Omega}\eta\left[\frac{1}{m+1}u^{m+1}_0-u_0v^m(\spcdot,0)\right]\dx 
\end{align*}
where $u_0\in L^{m+1}(\Omega)$ is a function giving the initial values
of the solution, and $\alpha$ is a nonnegative Lipschitz continuous
cutoff function depending only on the time variable with
$\alpha(T)=0$. The role of the function $\alpha$ is to eliminate the
final time term, as we do not know in general whether $u$ is
continuous in time. Observe that if $
\partial_t u\in L^2(0,T;H^{-1}(\Omega))$, we have 
\begin{displaymath}
	\int_0^T\inprod{
	\partial_t u,\alpha\eta(v^m-u^m)}\dt=\dinprod{
	\partial_t u,\alpha\eta(v^m-u^m)}_{u_0}\,. 
\end{displaymath}
This follows formally from integration by parts, and the rigorous
justification is given in \cite[Lemma~3.2]{BLS}. This makes the
following definition consistent with the definition of strong
solutions in the previous section, i.e. strong solutions are also weak
solutions.
\begin{definition}
  \label{def:loc-weak} A nonnegative function $u\in
  K_{\psi}(\Omega_T)$ is a {\it weak solution to the obstacle problem
    for the porous medium equation} if the inequality
	\begin{equation*}
          \dinprod{
            \partial_tu,\alpha\eta(v^m-u^m)}_{u_0} + \int_{\Omega_T}\alpha \nabla u^m\spcdot \nabla\big(\eta(v^m-u^m)\big)\de z \ge 0 
	\end{equation*}
	holds true for all comparison maps $v\in K'_{\psi}(\Omega_T)$ and every nonnegative, Lipschitz continuous cut-off function depending only on the time variable with $\alpha(T)=0$.
\end{definition}

\begin{theorem} \label{thm:pme-obstacle-weak} 
  Let $\Omega$ be a bounded open subset of
  $\R^n$ with a smooth boundary. Assume that the obstacle $\psi$
  satisfies the regularity condition \eqref{obstacle}. Then there
  exists a weak solution $u$ to the obstacle problem for the porous
  medium equation in the sense of Definition \ref{def:loc-weak}
  satisfying $u^m\in L^2(0,T;H^1_0(\Omega))$.
	%
	% The function $u$ is also locally H\"older continuous, and satisfies $u\geq \psi$ everywhere in $\Omega_T$. 
	Again, $u$ is a weak supersolution to the porous medium equation in $\Omega_T$.%, and a weak solution in the open set $\{z\in \Omega_T:u(z)>\psi(z)\}$.
\end{theorem}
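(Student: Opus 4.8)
The plan is to follow the construction of \cite{BLS}: approximate $\psi$ by smoother obstacles $\psi_k$ that satisfy \emph{both} the regularity condition \eqref{obstacle} and the extra condition \eqref{Psi}, apply Theorem~\ref{thm:pme-obstacle} to obtain strong solutions $u_k$ to the $\psi_k$-obstacle problems, derive bounds on the $u_k$ that are uniform in $k$, and pass to the limit in the variational inequality of Definition~\ref{def:loc-weak}, which is precisely the form that does not see $\partial_t u$. Throughout, the function giving the initial values is $u_0\equiv 0$, consistent with $u_k(\spcdot,0)=0$ supplied by Theorem~\ref{thm:pme-obstacle}.

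For the approximation I would mollify $\psi^m$ in time (extending by zero past $t=0$ and $t=T$, and in space as well if $\psi$ is not already space-regular), then subtract a small multiple of a fixed cutoff and multiply by a cutoff supported near $\supp\psi$, arranging that the resulting $\psi_k^m$ is smooth, compactly supported in $\Omega_T$, and $\psi_k\le\psi_{k+1}\le\psi$ a.e.\ with $\psi_k\uparrow\psi$ a.e. Smoothness forces $\partial_t\psi_k^m$ and $\Delta\psi_k^m$ to be bounded, so \eqref{Psi} holds for $\psi_k$; standard mollifier estimates give $\psi_k^m\to\psi^m$ in $L^2(0,T;H^1_0(\Omega))$ and $\partial_t(\psi_k^m)\to\partial_t(\psi^m)$ in $L^{\frac{m+1}{m}}(\Omega_T)$ with uniformly bounded norms, so \eqref{obstacle} holds uniformly. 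The inequality $\psi_k\le\psi$ is what makes every comparison map $v\in K'_{\psi}(\Omega_T)$ admissible also for the $u_k$-inequality (i.e.\ $v\in K'_{\psi_k}(\Omega_T)$), while $\psi_k\le\psi_{k+1}$, together with the monotonicity of the obstacle problem in the obstacle, gives $u_k\le u_{k+1}$.

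Next I would record the uniform estimates. Testing the strong-solution inequality of Definition~\ref{def:strong} for $u_k$ with the comparison map $v=\psi_k$ and a suitable cutoff, and using the uniform control on $\psi_k$, the usual PME energy computation gives that $\norm{u_k^m}_{L^2(0,T;H^1_0(\Omega))}$ and $\norm{u_k}_{L^\infty(0,T;L^{m+1}(\Omega))}$ are bounded independently of $k$; interpolating these bounds shows that $u_k$ and $u_k^{m+1}$ are bounded in $L^q(\Omega_T)$ for some $q>1$ and hence uniformly integrable. Since $u_k\uparrow$, we may take the pointwise limit $u=\lim_k u_k$ a.e.\ in $\Omega_T$; combined with the energy bound this gives $u^m\in L^2(0,T;H^1_0(\Omega))$, $\nabla u_k^m\rightharpoonup\nabla u^m$ weakly and $u_k^m\to u^m$ strongly in $L^2(\Omega_T)$, and $u\ge\psi$ a.e., so $u\in K_\psi(\Omega_T)$.

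It then remains to let $k\to\infty$ in the inequality of Definition~\ref{def:loc-weak} for $u_k$ (with $u_0=0$), which holds for every fixed $v\in K'_{\psi}(\Omega_T)$ and every admissible cutoff since such $v$ is admissible for each $u_k$. The time-bracket term involves only $u_k$, $v^m$ and $\partial_t v^m$: the $u_k^{m+1}$ contribution passes by monotone (or uniformly integrable) convergence, the $u_k v^m$ and $u_k\partial_t v^m$ contributions by monotone and weak $L^{m+1}(\Omega_T)$ convergence of $u_k$, and the final-data term vanishes since $u_0=0$. In the elliptic term, expanding shows that the quadratic part $-\int_{\Omega_T}(\text{cutoff})\,|\nabla u_k^m|^2\,\de z$ enters with the favorable sign, so its $\limsup$ is controlled by weak lower semicontinuity (the cutoffs being nonnegative) and \emph{no} Minty-type monotonicity argument is needed; the remaining pieces pair the weakly convergent $\nabla u_k^m$ with fixed or strongly convergent factors. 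This yields the variational inequality for $u$, so $u$ is a weak solution of the $\psi$-obstacle problem with $u^m\in L^2(0,T;H^1_0(\Omega))$. Finally, $u$ is a weak supersolution of the PME in $\Omega_T$: if the $u_k$ are locally uniformly bounded this is Lemma~\ref{Convergence}, and in general it follows from the monotone limit $u_k\uparrow u$ and the uniform energy bound, which pass the distributional supersolution inequality and the local gradient bound to the limit exactly as in the proof of Lemma~\ref{Convergence}. The main obstacle is this termwise limit passage — the quadratic gradient term in particular, rendered harmless here only by its sign — together with the bookkeeping needed to construct a single sequence $\psi_k$ that is simultaneously smooth, compactly supported, below $\psi$, and increasing to $\psi$.
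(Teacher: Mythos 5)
Your overall strategy is the right one --- this theorem is not proved in the paper at all, but imported from \cite{BLS} (Theorem~2.7 there), whose proof is exactly an approximation by obstacles satisfying the extra hypothesis \eqref{Psi} followed by a limit passage; the limit-passage machinery is what the paper records as Theorem~\ref{thm:stabab}. Your treatment of the limit itself is essentially sound (in particular the observation that the quadratic term $-\int\alpha\,|\nabla u_k^m|^2$ has the favorable sign, so weak lower semicontinuity suffices and no Minty argument is needed). The gaps are in the two structural shortcuts you use to avoid the compactness argument.

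First, the monotone smooth approximation from below is not justified. An obstacle satisfying only \eqref{obstacle} is a Sobolev-type function, in general discontinuous; mollifying $\psi^m$ does not keep you below $\psi^m$, and subtracting a small multiple of a cutoff only repairs this when $\psi^m$ is uniformly continuous, which you cannot assume. So the existence of smooth, compactly supported $\psi_k\le\psi_{k+1}\le\psi$ with $\psi_k^m\to\psi^m$ in $L^2(0,T;H^1_0(\Omega))$ and $\partial_t(\psi_k^m)\to\partial_t(\psi^m)$ in $L^{\frac{m+1}{m}}(\Omega_T)$ is an unproved (and, as stated, dubious) claim; yet the ordering $\psi_k\le\psi$ is exactly what you use to make every $v\in K'_\psi(\Omega_T)$ admissible for the $u_k$-inequality. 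Second, the assertion $u_k\le u_{k+1}$ ``by monotonicity of the obstacle problem in the obstacle'' is not a black-box fact in this setting: in this paper such comparisons between solutions of obstacle problems are obtained only through Theorem~\ref{Comparison1}, which requires continuity, strict positivity of the larger function on the relevant compact set, and the structural information of Theorem~\ref{lem:obstacle} (this is why Theorem~\ref{thm:approxpox} and Theorem~\ref{thm:variational-is-smallest-crappyversion} carry positivity hypotheses). For general obstacles satisfying only \eqref{obstacle}, which may vanish on large sets, these hypotheses fail; the alternative is to extract monotonicity from the penalization scheme of \cite{BLS}, but that means reopening the construction rather than arguing from Definition~\ref{def:strong}. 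Without these two ingredients your monotone-limit shortcut collapses: you lose a.e.\ convergence of $u_k$ and the admissibility of fixed comparison maps, and you are forced back to the compactness-in-time and comparison-map approximation arguments that constitute the actual proof in \cite{BLS}. The clean route is therefore: mollify $\psi$ without insisting on ordering (this easily yields obstacles satisfying \eqref{obstacle} and \eqref{Psi} converging in the norms of \eqref{obstacle}), take the strong solutions from Theorem~\ref{thm:pme-obstacle} (which are weak solutions), and invoke the stability statement of Theorem~\ref{thm:stabab}, whose proof contains precisely the work your proposal elides.
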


The following theorem may be extracted from the proof of Theorem~2.7
in \cite{BLS}.
\begin{theorem}\label{thm:stabab}
  Let $\psi_i$ be a sequence of obstacles satisfying \eqref{obstacle} with compact support in
  $\Omega_T$ such that
  \begin{equation*}
	  \psi_i^m\to \psi^m\text{ in } L^2(0,T;H^1_0(\Omega))\,, \quad\text{and}\quad
	  \partial_t (\psi_i^m)\to \partial_t(\psi^m) \text{ in }L^{\frac{m+1}{m}}(\Omega_T)\,,
  \end{equation*}
  furthermore let $u_i$ be the respective variational weak solutions to the
  obstacle problem with obstacle $\psi_i$, see Theorem \ref{thm:pme-obstacle-weak}.

  Then there is a function $u\in L^\infty(0,T;L^{m+1}(\Omega))$ with
  $u^m\in L^2(0,T;H^1_0(\Omega))$ and $u(\spcdot, 0)=0 $ such
  that, up to subsequences,
  \begin{equation*}
    u_i\to u \text{ a.e.},\quad u^m_i\to u^m \text{ in
    }L^2(\Omega_T), \quad \text{and }\nabla u^m_i\to \nabla u^m\text{
      weakly in }L^{2}(\Omega_T)\,. 
  \end{equation*}
  Furthermore, $u$ is a variational weak solution to the obstacle problem
  with obstacle $\psi$ and initial values zero.
\end{theorem}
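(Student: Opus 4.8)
The plan is to follow the standard route for the stability of variational solutions: first derive uniform energy estimates for the $u_i$, then extract a convergent subsequence by compactness, and finally pass to the limit in the variational inequality for the obstacle $\psi_i$ to recover the one for $\psi$. For the a priori bounds I would test the variational inequality for $u_i$ with the comparison map $v=\psi_i$ (admissible since $\psi_i\in K'_{\psi_i}(\Omega_T)$) and a cutoff $\alpha$ approximating $\chi_{[0,\tau]}$; expanding $\dinprod{\partial_t u_i,\alpha\eta(\psi_i^m-u_i^m)}_{0}$ and absorbing the good terms gives an estimate of the shape
\[
  \esssup_{0<\tau<T}\int_{\Omega}u_i^{m+1}(\spcdot,\tau)\dx + \int_{\Omega_T}\abs{\grad u_i^m}^2\dxdt \le C,
\]
with $C$ depending only on $\sup_i\big(\norm{\psi_i^m}_{L^2(0,T;H^1_0(\Omega))}+\norm{\partial_t\psi_i^m}_{L^{(m+1)/m}(\Omega_T)}\big)$, which is finite by the assumed convergence of the obstacles. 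In particular $u_i^m$ is bounded in $L^2(0,T;H^1_0(\Omega))$ and $u_i$ in $L^\infty(0,T;L^{m+1}(\Omega))$.

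Next, compactness. From the equation one controls $\partial_t u_i$ in a negative-order space (for instance $L^1(0,T;W^{-1,1}(\Omega))$, or the dual of a parabolic Sobolev space), which together with the gradient bound and an Aubin–Lions/Simon type argument yields a subsequence with $u_i\to u$ a.e.\ and $u_i^m\to u^m$ in $L^2(\Omega_T)$; weak compactness of the gradients gives $\grad u_i^m\rightharpoonup\grad u^m$ in $L^2(\Omega_T)$, and hence $u^m\in L^2(0,T;H^1_0(\Omega))$ and $u\in L^\infty(0,T;L^{m+1}(\Omega))$. Since $\psi_i^m\to\psi^m$ in $L^2(\Omega_T)$ we may also assume $\psi_i\to\psi$ a.e., so $u_i\ge\psi_i$ a.e.\ passes in the limit to $u\ge\psi$ a.e., i.e.\ $u\in K_{\psi}(\Omega_T)$; the uniform time-continuity estimate together with $u_i(\spcdot,0)=0$ gives $u(\spcdot,0)=0$.

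Now the limit passage. Fix a comparison map $v\in K'_{\psi}(\Omega_T)$ and a cutoff $\alpha$ (with $\alpha,\eta\ge 0$). Since $v$ need not satisfy $v\ge\psi_i$, I would instead use $v_i:=\big(\max\{v^m,\psi_i^m\}\big)^{1/m}$, which lies in $K'_{\psi_i}(\Omega_T)$ and satisfies $v_i^m\to v^m$ in $L^2(0,T;H^1_0(\Omega))$ and $\partial_t v_i^m\to\partial_t v^m$ in $L^{(m+1)/m}(\Omega_T)$, precisely because $\psi_i^m\to\psi^m$ in those two norms. Writing the variational inequality for $u_i$ with comparison map $v_i$: in the time term $\dinprod{\partial_t u_i,\alpha\eta(v_i^m-u_i^m)}_{0}$, which (as $u_0=0$) only involves $u_i$, $v_i^m$, $\partial_t v_i^m$ under the integral, the products $u_iv_i^m$, $u_i\,\partial_t v_i^m$ and the power $u_i^{m+1}$ converge in $L^1(\Omega_T)$ by Hölder together with $u_i\to u$ a.e.\ and the uniform bound on $u_i$ in $L^{2m}(\Omega_T)$ (note $2m>m+1$ for $m>1$). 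For the spatial term, expand
\[
  \grad u_i^m\spcdot\grad\big(\eta(v_i^m-u_i^m)\big)=\eta\,\grad u_i^m\spcdot\grad v_i^m-\eta\abs{\grad u_i^m}^2+(v_i^m-u_i^m)\,\grad u_i^m\spcdot\grad\eta;
\]
the first and third terms converge by weak–strong pairing ($\grad u_i^m\rightharpoonup\grad u^m$ weakly, $\grad v_i^m\to\grad v^m$ and $v_i^m-u_i^m\to v^m-u^m$ strongly in $L^2$), while the crucial term $-\int_{\Omega_T}\alpha\eta\abs{\grad u_i^m}^2$ is handled by weak lower semicontinuity: since $\alpha\eta\ge 0$, $\liminf_i\int_{\Omega_T}\alpha\eta\abs{\grad u_i^m}^2\ge\int_{\Omega_T}\alpha\eta\abs{\grad u^m}^2$, which enters the inequality with the favourable sign. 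Collecting, the $\liminf$ of the left-hand sides for $u_i,v_i$ dominates the corresponding expression for $u,v$, which is exactly the inequality of Definition \ref{def:loc-weak} for the obstacle $\psi$; as $v$ and $\alpha$ were arbitrary, $u$ is a variational weak solution with obstacle $\psi$ and zero initial values.

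The main obstacle is the limit in the quadratic gradient term: one cannot identify $\lim_i\int_{\Omega_T}\alpha\eta\abs{\grad u_i^m}^2$ with the limit expression, only bound it from below, so the argument succeeds precisely because this term appears in the inequality with the right sign. A second, more technical point is that the modification $v\mapsto v_i$ must preserve the regularity $\partial_t v_i^m\in L^{(m+1)/m}(\Omega_T)$ — this is where the second mode of convergence of the obstacles is essential — and one should check that $\max$ of two functions in $K'_{\psi_i}$ remains in that class with the claimed convergences, which follows from the chain/product rule for Sobolev functions and the elementary estimate $|\max\{a,b\}-\max\{a,c\}|\le|b-c|$ applied pointwise in time to $v^m$ and $\psi_i^m$.
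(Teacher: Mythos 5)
The paper offers no proof of this theorem at all---it is stated as extractable from the proof of Theorem~2.7 in \cite{BLS}---and your sketch reconstructs essentially that argument: uniform energy bounds obtained by testing the variational inequality with the obstacle itself, compactness giving a.e.\ convergence, and passage to the limit for an arbitrary comparison map $v\in K'_{\psi}(\Omega_T)$ after replacing it by an admissible $v_i\in K'_{\psi_i}(\Omega_T)$, with the quadratic term $-\int\alpha\eta\abs{\nabla u_i^m}^2$ handled by weak lower semicontinuity, which indeed enters with the favourable sign. The only step stated too casually is the compactness: classical Aubin--Lions does not apply verbatim, since the spatial bound is on $u_i^m$ while the time-derivative information concerns $u_i$ (and, as the $u_i$ are weak supersolutions, $\partial_t u_i$ involves their nonnegative Riesz measures in addition to an $H^{-1}$ term), so one needs an Alt--Luckhaus type time-difference argument combined with the elementary inequality relating $\abs{a-b}^{m+1}$ to $(a^m-b^m)(a-b)$, exactly as in \cite{BLS}; with that understood, your plan is correct and coincides with the cited proof's strategy.
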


Since strong variational solutions are also weak variational
solutions, we get the following theorem as an immediate consequence of
Theorems~\ref{thm:approxpox}~and~\ref{thm:stabab}.

\begin{theorem}\label{thm:smallest-is-variational}
  Let $\psi$ be a continuous function with compact support in
  $\Omega_T$ satisfying the regularity assumptions
  \eqref{obstacle}. Then the smallest supersolution
  $\widehat{R}_\psi$ is also a variational weak solution.
\end{theorem}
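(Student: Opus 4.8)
The plan is to obtain the statement by combining the approximation result of Theorem~\ref{thm:approxpox} with the stability result of Theorem~\ref{thm:stabab}, the bridge being the observation (already noted before Definition~\ref{def:loc-weak}) that strong solutions to the obstacle problem are also weak variational solutions. First I would invoke Theorem~\ref{thm:approxpox}: since $\psi$ is continuous and compactly supported, $\widehat{R}_\psi$ is the increasing pointwise limit of strong solutions $w_j$ to obstacle problems with smooth, compactly supported obstacles $\phi_j\nearrow\psi$. Each such $\phi_j$ is smooth and compactly supported with $\phi_j^m\in C_0^2$, so it satisfies \eqref{obstacle} and \eqref{Psi}; hence $w_j$ exists by Theorem~\ref{thm:pme-obstacle}, and being a strong solution it is a weak variational solution in the sense of Definition~\ref{def:loc-weak}, with zero initial values.

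The point that requires care — and which I expect to be the only non-routine step — is that the obstacles $\phi_j$ must be chosen so that, in addition to $\phi_j\nearrow\psi$ pointwise, one has $\phi_j^m\to\psi^m$ in $L^2(0,T;H^1_0(\Omega))$ and $\partial_t(\phi_j^m)\to\partial_t(\psi^m)$ in $L^{\frac{m+1}{m}}(\Omega_T)$. This is exactly where the extra hypothesis \eqref{obstacle} on $\psi$, which is not assumed in Theorem~\ref{thm:approxpox}, is used: since $\psi^m\in L^2(0,T;H^1_0(\Omega))$ and $\partial_t(\psi^m)\in L^{\frac{m+1}{m}}(\Omega_T)$, a standard mollification and truncation of $\psi^m$ supplies an approximating sequence converging in these norms, and revisiting the construction in the proof of Theorem~\ref{thm:approxpox} (where the auxiliary functions $f_j$ are obtained by mollification and carry a free parameter) one can arrange the monotone pointwise convergence and the norm convergence at the same time.

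With this in hand I would apply Theorem~\ref{thm:stabab} to the obstacles $\phi_j$ and the weak variational solutions $w_j$: it produces a function $u$ with $u^m\in L^2(0,T;H^1_0(\Omega))$ and $u(\spcdot,0)=0$ such that, along a subsequence, $w_j\to u$ a.e.\ in $\Omega_T$, and $u$ is a variational weak solution to the obstacle problem with obstacle $\psi$. On the other hand, Theorem~\ref{thm:approxpox} asserts that the \emph{full} sequence $w_j$ converges to $\widehat{R}_\psi$ at every point, so $u=\widehat{R}_\psi$ almost everywhere. Hence $\widehat{R}_\psi$ coincides a.e.\ with a variational weak solution, which is the claim. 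Everything beyond the choice of the $\phi_j$ is bookkeeping: the consistency of strong and weak variational solutions, the passage to a subsequence, and the identification of the a.e.\ limit with $\widehat{R}_\psi$.
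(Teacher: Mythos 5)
Your proposal is correct and follows essentially the same route as the paper, which obtains the theorem as an immediate consequence of Theorems~\ref{thm:approxpox} and~\ref{thm:stabab} together with the observation that strong solutions are also weak variational solutions. The one point you single out for care --- arranging the obstacles $\phi_j$ from Theorem~\ref{thm:approxpox} so that $\phi_j^m\to\psi^m$ in $L^2(0,T;H^1_0(\Omega))$ and $\partial_t(\phi_j^m)\to\partial_t(\psi^m)$ in $L^{\frac{m+1}{m}}(\Omega_T)$, using the hypothesis \eqref{obstacle} --- is exactly the step the paper leaves implicit, and your sketch of how to secure it is sound.
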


A general converse for Theorem~\ref{thm:smallest-is-variational}
remains open. We record the following partial result for use in
Section~\ref{sec:cap}.

\begin{theorem}\label{thm:variational-is-smallest-crappyversion} 
  Let $\psi$ be a smooth obstacle with
  %$\inf_{\Omega_T}\psi>0$. Then any variational strong solution $u$ to
  $\psi>0$ in $\Omega_T$. Then any variational strong solution $u$ to
  the obstacle problem coming from Theorem~\ref{thm:pme-obstacle} 
  satisfies $u=\widehat{R}_\psi$.
\end{theorem}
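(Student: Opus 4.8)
The plan is to show the two inequalities $u \leq \widehat{R}_\psi$ and $\widehat{R}_\psi \leq u$ separately, using that $u$ is a weak supersolution above $\psi$ together with the comparison principle of Theorem~\ref{Comparison1}. Since $u \in \mathcal{U}_\psi$ (it is lower semicontinuous, being continuous, finite, and a \lscss{} above $\psi$ by Theorem~\ref{thm:pme-obstacle}), the inequality $R_\psi \leq u$ is immediate from the definition of the r\'eduite, and taking lower semicontinuous regularizations gives $\widehat{R}_\psi \leq u$ at once. So the entire content is the reverse inequality $u \leq \widehat{R}_\psi$.

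For that direction, the idea is to run the same kind of argument as in Theorem~\ref{thm:approxpox}. Because $\psi > 0$ everywhere in $\Omega_T$ and $\psi$ is smooth, the obstacle already satisfies \eqref{obstacle} and \eqref{Psi}, so $u$ itself is the strong solution from Theorem~\ref{thm:pme-obstacle} with obstacle $\psi$, and by Theorem~\ref{lem:obstacle} it is a weak solution in $\Omega_T \setminus K$, where $K = \supp(\psi) \cap \{u = \psi\}$. The goal is to compare $u$ with an arbitrary competitor: fix $v \in \mathcal{U}_\psi$; I want $u \leq v$. Here $u$ plays the role of the continuous weak supersolution (it is continuous and a supersolution in $\Omega_T$, a solution off $K$), and $v$ the lower semicontinuous supersolution. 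On $K$ we have $u = \psi \leq v$, and since $\psi > 0$ on $\Omega_T$ and $K$ is compact we get $\inf_K v \geq \inf_K \psi > 0$, so the hypothesis $v > 0$ on $K$ of Theorem~\ref{Comparison1} holds. On the parabolic boundary $\partial_p \Omega_T$ we have $u(\spcdot,0) = 0$ and $u = 0$ on the lateral boundary (from $u^m \in L^2(0,T;H^1_0(\Omega))$), while $v \geq 0$; hence $u \leq v$ on $\partial_p \Omega_T$. Applying Theorem~\ref{Comparison1} yields $u \leq v$ in $\Omega_T$. Taking the infimum over all $v \in \mathcal{U}_\psi$ gives $u \leq R_\psi$, and since $u$ is continuous, $u = \widehat{u} \leq \widehat{R}_\psi$.

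Combining the two inequalities with the trivial chain $\widehat{R}_\psi \leq R_\psi$ gives $u \leq \widehat{R}_\psi \leq R_\psi \leq u$, so all are equal and $u = \widehat{R}_\psi$.

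The main obstacle is verifying that Theorem~\ref{Comparison1} genuinely applies with $K = \supp(\psi) \cap \{u = \psi\}$: one must check that $K$ is compact (it is closed as the intersection of a compact set with the closed set $\{u = \psi\}$, using continuity of $u$ and $\psi$, and contained in the compact $\supp(\psi)$), that $u$ is a \emph{continuous} weak supersolution on $\Omega_T$ and an actual weak \emph{solution} on $\Omega_T \setminus K$ (this is exactly Theorem~\ref{lem:obstacle}, valid here since $\psi$ meets \eqref{obstacle} and \eqref{Psi}), and that both integrability conditions $u^m, v^m \in L^2(0,T;H^1(\Omega))$ hold — the first from Theorem~\ref{thm:pme-obstacle}, the second from $v \in \mathcal{U}_\psi$ together with the fact that competitors in the r\'eduite can be taken in $L^2(0,T;H^1_0(\Omega))$ after the usual reduction (or by restricting attention to $v$ in the relevant class, as in the proof of Theorem~\ref{thm:bala-bala}). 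A minor subtlety is that a general \lscss{} $v$ need not a priori satisfy $v^m \in L^2(0,T;H^1(\Omega))$; this is handled exactly as in the passage from $R_\psi$ to $\widehat R_\psi$ in Theorem~\ref{thm:bala-bala}, by noting that it suffices to compare against the weak supersolutions obtained as Perron-type lower bounds, for which the integrability is available.
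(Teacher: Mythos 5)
Your proposal is correct and follows essentially the same route as the paper: the inequality $\widehat{R}_\psi\leq u$ comes from $u\in\mathcal{U}_\psi$, and the reverse inequality comes from applying Theorem~\ref{Comparison1} to $u$ and an arbitrary $v\in\mathcal{U}_\psi$ on the complement of the contact set, where $\psi>0$ guarantees $v\geq\psi>0$ there (the paper's proof is just a terser version of this, and it glosses over the same integrability/compactness technicalities you flag). Your closing chain via $\widehat{R}_\psi\leq R_\psi$ is redundant, since you already have both inequalities between $u$ and $\widehat{R}_\psi$.
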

\begin{proof}
  Since $\psi>0$, any strong solution is strictly
  positive inside $\Omega_T$. Thus, given a semicontinuous supersolution $v\in \U_\psi$,
  we may apply Theorem \ref{Comparison1} on the set $\{u>\psi\}$ to
  conclude that $u\leq v$. Since $u\in \U_\psi$, we get
  $u=\widehat{R}_\psi$.
\end{proof}

\section{Parabolic capacity for the porous medium equation}
\label{sec:cap}

In this section, we define the parabolic capacity for the porous medium equation and establish its basic properties. 
\begin{definition}
	The PME capacity of an arbitrary subset $E$ of $\Omega_{\infty}$ is 
	\begin{equation*}
		\capacity(E) = \sup \{\mu(\Omega_\infty): 0 \leq u_{\mu} \leq 1, \supp(\mu) \subset E \}\,, 
	\end{equation*}
	where $\mu$ is a positive Radon measure, and $u_\mu$ is a weak
        supersolution with $u_{\mu} = 0$ on $\partial_p
        \Omega_\infty$, and a weak solution to the measure data problem
        \begin{displaymath}
          (u_{\mu})_t-\Delta u^m_{\mu}=\mu\,.
        \end{displaymath}
\end{definition}

Our next result is that there exists a \emph{capacitary extremal} for
the PME capacity of a compact set $K$, i.e. a \lscss{} $u$ such that
$\capacity(K)=\mu_u(K)$. We need the following two lemmas.
\begin{lemma}\label{lem:Comparison15}
  Let $\psi$ be a smooth, positive compactly supported function, and
  set
  \begin{equation*}
    \psi_\varepsilon=(\psi^m+\varepsilon^m)^{1/m}\quad \text{and}\quad v_\varepsilon=\widehat{R}_{\psi_\varepsilon}\,.
  \end{equation*}
  Then the limit function
  \begin{equation*}
    v=\lim_{\varepsilon\to 0}v_\varepsilon
  \end{equation*}
  is a continuous weak supersolution and a weak variational solution
  to the obstacle problem with obstacle $\psi$ in $\Omega_T$. Further,
  $v$ is a weak solution in the open set $\{v>\psi\}$.
\end{lemma}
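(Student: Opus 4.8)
The plan is to build the function $v$ as a monotone limit, verify its basic regularity from uniform estimates, and then identify it in two ways: first as a variational weak solution of the $\psi$-obstacle problem via the stability theorem, and second as a weak solution on $\{v>\psi\}$ via the comparison-type argument already used in Section \ref{sec:ost}. First I would note that each $\psi_\varepsilon=(\psi^m+\varepsilon^m)^{1/m}$ is smooth, strictly positive and compactly supported, so $\psi_\varepsilon^m=\psi^m+\varepsilon^m$ is smooth with compact support and hence satisfies the regularity conditions \eqref{obstacle} and \eqref{Psi}. Therefore Theorem \ref{thm:pme-obstacle} produces a strong solution, and by Theorem \ref{thm:variational-is-smallest-crappyversion} (applicable since $\psi_\varepsilon>0$ in $\Omega_T$) this strong solution equals $v_\varepsilon=\widehat R_{\psi_\varepsilon}$. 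So each $v_\varepsilon$ is simultaneously the balayage, a continuous weak supersolution, a strong (hence weak) variational solution to the $\psi_\varepsilon$-obstacle problem, and a weak solution in $\{v_\varepsilon>\psi_\varepsilon\}$.

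Next I would establish monotonicity and uniform bounds. For $\varepsilon_1<\varepsilon_2$ one has $\psi_{\varepsilon_1}\le\psi_{\varepsilon_2}$, so $\U_{\psi_{\varepsilon_2}}\subset\U_{\psi_{\varepsilon_1}}$ and therefore $v_{\varepsilon_1}=\widehat R_{\psi_{\varepsilon_1}}\le\widehat R_{\psi_{\varepsilon_2}}=v_{\varepsilon_2}$; thus $v_\varepsilon$ decreases as $\varepsilon\to 0$ and the pointwise limit $v=\lim_{\varepsilon\to 0}v_\varepsilon$ exists. All the $v_\varepsilon$ lie below the fixed supersolution $\widehat R_{\psi_{\varepsilon_0}}$ and above $\psi$, and since $\psi$ is bounded the family is uniformly bounded; a fixed global supersolution above $\psi+\sup\psi$ (for instance the solution of the $\psi_{\varepsilon_0}$-obstacle problem) gives a uniform $L^\infty$ bound on $\Omega_T$. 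Consequently $v$ is a \lscss{} (an increasing-in-$\varepsilon\to 0$... more precisely, $v$ is the decreasing limit, so I would instead argue that $v$ is a weak supersolution directly): applying Lemma \ref{lem:caccioppoli} to each $v_\varepsilon$ gives a uniform bound on $\nabla v_\varepsilon^m$ in $L^2_{loc}$, and then Lemma \ref{Convergence} shows that $v$ is a weak supersolution and that $\mu_{v_\varepsilon}\to\mu_v$ in the weak sense; continuity of $v$ follows because $v\ge\psi$ everywhere and $v\le v_\varepsilon$, so $v$ is squeezed, while local Hölder continuity of weak supersolutions that are also locally solutions will be recovered on $\{v>\psi\}$ — for the global continuity I would use that the uniform modulus of continuity of the $v_\varepsilon$ from the obstacle regularity (Theorem \ref{thm:pme-obstacle}) passes to the limit.

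To identify $v$ as a variational weak solution of the $\psi$-obstacle problem, I would check that $\psi_\varepsilon^m=\psi^m+\varepsilon^m\to\psi^m$ in $L^2(0,T;H^1_0(\Omega))$ and $\partial_t(\psi_\varepsilon^m)=\partial_t(\psi^m)\to\partial_t(\psi^m)$ in $L^{(m+1)/m}(\Omega_T)$ (both are immediate, the latter being an equality), so the hypotheses of Theorem \ref{thm:stabab} hold with $\psi_i=\psi_{1/i}$; the $v_\varepsilon$ are the corresponding variational weak solutions, hence the stability theorem yields a subsequential a.e.\ limit which is a variational weak solution to the $\psi$-obstacle problem with zero initial values, and this limit must coincide with $v$ by uniqueness of the monotone limit. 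Finally, to see that $v$ is a weak solution in $\{v>\psi\}$: if $z_0\in\{v>\psi\}$ then $v(z_0)>\psi(z_0)$, and since $v_\varepsilon\downarrow v$ with $v_\varepsilon$ continuous and $\psi$ continuous, for small $\varepsilon$ we have $v_\varepsilon>\psi_\varepsilon$ in a neighbourhood of $z_0$ (using $\psi_\varepsilon\to\psi$ uniformly); there $v_\varepsilon$ is a weak solution, and Lemma \ref{Convergence} (the uniformly bounded convergent sequence of solutions is a solution, with converging Riesz measures that are zero there) shows $v$ is a weak solution near $z_0$. The main obstacle I anticipate is the continuity of $v$ up to identifying it on $\{v>\psi\}$: a priori a decreasing limit of continuous supersolutions is only upper semicontinuous, so one genuinely needs either the uniform modulus of continuity coming from the obstacle-problem regularity in Theorem \ref{thm:pme-obstacle} (uniform in $\varepsilon$ because the obstacles $\psi_\varepsilon$ have $\varepsilon$-independent bounds on $\psi_\varepsilon^m$ in the relevant norms once $\varepsilon\le\varepsilon_0$), or a separate argument that $v$, being both a weak supersolution and — on the complement of $\supp\psi$ and on $\{v>\psi\}$ — a weak solution, is continuous; reconciling these two regions across $\partial\{v>\psi\}$ is exactly the kind of step the comparison principle of Theorem \ref{Comparison1} is designed to handle.
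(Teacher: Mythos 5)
Your overall strategy is the same as the paper's: identify each $v_\varepsilon=\widehat R_{\psi_\varepsilon}$ with the strong variational solution via Theorem~\ref{thm:variational-is-smallest-crappyversion}, use monotonicity in $\varepsilon$ to obtain the decreasing limit $v$, get the supersolution property from Lemma~\ref{Convergence} (with the Caccioppoli bound), identify $v$ as a weak variational solution for the obstacle $\psi$ by stability, and pass to the limit in the equation on $\{v>\psi\}$. The genuine gap is the continuity of $v$. The paper obtains it by citing the regularity result \cite{BLS2} for the (already identified) weak variational solution $v$; you instead propose either a squeeze argument, which only yields lower semicontinuity at contact points where $v=\psi$ (a decreasing limit of continuous functions is a priori only upper semicontinuous, as you note), or a modulus of continuity for the $v_\varepsilon$ ``uniform in $\varepsilon$'' extracted from Theorem~\ref{thm:pme-obstacle} --- but that theorem only asserts qualitative local H\"older continuity of each individual solution, with no uniformity in the obstacle claimed, so this step is not available from the results stated in the paper and would essentially require reproving the \cite{BLS2}-type estimate. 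The continuity is not cosmetic: it is what makes $\{v>\psi\}$ open and what powers the final step.

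Relatedly, your argument on $\{v>\psi\}$ works pointwise: for $z_0$ with $v(z_0)>\psi(z_0)$ you produce, for each small $\varepsilon$, a neighbourhood where $v_\varepsilon>\psi_\varepsilon$; but this neighbourhood may depend on $\varepsilon$ and shrink, so Lemma~\ref{Convergence} cannot be applied on a fixed open set as written. The paper avoids this by fixing a compact $K\subset\{v>\psi\}$, using continuity of $v$ to get $\inf_K(v-\psi)>0$, and the pointwise inequalities $v_\varepsilon\ge v$ and $\psi_\varepsilon\le\psi+\varepsilon$ to conclude $K\subset\{v_\varepsilon>\psi_\varepsilon\}$ for all sufficiently small $\varepsilon$ simultaneously, after which the equation passes to the limit for test functions supported in $K$; your argument should be reorganized in this way (and it again presupposes the continuity step). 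Finally, a shared looseness rather than an error specific to you: $\psi_\varepsilon\ge\varepsilon>0$ everywhere, so contrary to your claim it is \emph{not} compactly supported and $\psi_\varepsilon^m\notin L^2(0,T;H^1_0(\Omega))$, so Theorem~\ref{thm:stabab} does not apply verbatim to the family $\psi_{1/i}$; the paper sidesteps this by invoking the stability results of \cite{BLS} directly, and some remark to that effect is needed in your write-up as well.
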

\begin{proof}
  The existence of the point-wise limit as $\varepsilon\to 0$ follows from the
  fact that $\widehat{R}_{\psi_{\varepsilon_1}}\leq \widehat{R}_{\psi_{\varepsilon_2}}$
  if $\varepsilon_1\leq \varepsilon_2$.  The limit $v$ is an upper semicontinuous
  weak supersolution as a decreasing limit of continuous weak
  supersolutions, and $v\geq \psi$ since $v_\varepsilon\geq \psi_\varepsilon$.  By
  Theorem~\ref{thm:variational-is-smallest-crappyversion}, we may take
  $v_\varepsilon$ to be a strong variational solution to the obstacle
  problem.  Hence $v$ is a weak variational solution to the obstacle
  problem by \cite{BLS}. The continuity follows from \cite{BLS2}.

  Since $v_\varepsilon$ is a variational strong solution to the obstacle
  problem, it is weak solution in the set $\{v_\varepsilon>\psi_\varepsilon\}$. If
  $K$ is now a compact set contained in $\{v>\psi\}$, we have that $K$
  is also contained in $\{v_\varepsilon>\psi_\varepsilon\}$ for all sufficiently
  small $\varepsilon$, since 
  \begin{equation*}
    v_\varepsilon-\psi_\varepsilon\geq \inf_{K}(v-\psi)-\varepsilon\,
  \end{equation*}
  by the inequalities $v_\varepsilon\geq v$,
  $-\psi_\varepsilon=-(\psi^m+\varepsilon^m)^{1/m}\geq -\psi-\varepsilon$,
  and the fact that $\inf_{K}(v-\psi)>0$. Thus 
  \begin{equation*}
    \int_{\Omega_T}-v\partial_t \varphi+\nabla v^m\spcdot \nabla
    \varphi\dx\dt=\lim_{\varepsilon\to 0}\int_{\Omega_T}-v_\varepsilon\partial_t \varphi+\nabla v^m_\varepsilon\spcdot \nabla
    \varphi\dx\dt=0
  \end{equation*}
  for all smooth test functions $\varphi$ with support in $K$. Since
  $K$ was arbitrary, $v$ is a weak solution.
\end{proof}

The next lemma is the key step in constructing the capacitary
extremal. For the proof, we record the following estimate. Let $u$ be
a positive weak supersolution in $\Omega_\infty$, $u$ vanishing on the
lateral boundary of $\Omega_\infty$. Suppose in addition that there
exists a time $t_0\geq 0$ so that $u$ is a weak solution in $\Omega_{t_0,\infty}$. 
Then 
\begin{equation}\label{universal}
  u(x,t)\leq c(t-t_0)^{-\frac{1}{m-1}}
\end{equation}
for all $t>t_0$ with a constant depending only on $n$, $m$, and the
diameter of $\Omega$.  This is the so-called universal estimate. See
Proposition~5.17 in \cite{VazquezBook} for the proof.

\begin{lemma}
  \label{Comparison3} Let $K$ be a compact subset of
  $\Omega_\infty$. Assume that $u$ and $v$ are lower semicontinuous
  weak supersolutions in $\Omega_\infty$ and that u is continuous in
  $\overline \Omega_\infty$, and a weak solution after a time $T$ such
  that $K\Subset \Omega_T$. Moreover, assume that $u > 1$ in $K$, $u =
  0$ on $
  \partial_p \Omega_\infty$, $0 \leq v \leq 1$ in $\Omega_\infty$, and $v=0$ on $
  \partial_p\Omega_\infty$. Then 
  \begin{equation*}
    \mu_v(K) \leq \mu_{u} (\Omega_\infty)\,. 
  \end{equation*}
\end{lemma}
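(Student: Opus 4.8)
The plan is to reduce the estimate to a pointwise comparison on $\Omega_\infty\setminus K$, read off the comparison principles already established, and then to turn that comparison, evaluated near the lateral boundary, into the desired inequality between total Riesz masses. First I would dispose of the case $\mu_u(\Omega_\infty)=\infty$, where there is nothing to prove. From the hypotheses on $u$ and the universal estimate \eqref{universal} I would record: since $u$ is continuous on $\overline{\Omega_\infty}$, vanishes on $\partial_p\Omega_\infty$, and is a weak solution in $\Omega_{T,\infty}$, one has $u(\cdot,t)\to0$ uniformly as $t\to\infty$; hence $u$ is bounded and, by Lemma~\ref{lem:caccioppoli} together with this decay, $u^m,\nabla u^m\in L^2(\Omega_\infty)$.

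Next I would introduce the measure $\nu$ defined as the restriction of $\mu_v$ to $K$, and the weak supersolution $z$ with Riesz measure $\nu$ vanishing on $\partial_p\Omega_\infty$ (the solution of the measure data problem $z_t-\Delta z^m=\nu$ occurring in the definition of $\capacity$). Since $\nu\le\mu_v$ and both $z$ and $v$ vanish on $\partial_p\Omega_\infty$, Lemma~\ref{ComparisonPrinciple} — applied on each cylinder $\Omega\times(0,S)$ and then letting $S\to\infty$, which circumvents the integrability issue at $t=\infty$ for $v$ — gives $z\le v\le1$ in $\Omega_\infty$. Because $\supp\nu\subset K\Subset\Omega_T$, the function $z$ is a continuous weak solution in $\Omega_\infty\setminus K$, in particular a weak solution for $t>T$, so by \eqref{universal} again it decays and $z^m,\nabla z^m\in L^2(\Omega_\infty)$.

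Then I would compare $z$ with $u$ on the open set $E=\Omega_\infty\setminus K$ by Theorem~\ref{Comparison2}, with $z$ as the continuous weak solution and $u$ as the lower semicontinuous weak supersolution: $\partial E$ splits into $\partial_p\Omega_\infty$, where $z=u=0$, and $\partial K\subset K$, where $z\le1<u$, so $z\le u$ on $\partial E$ and $u>0$ on the component $\partial K$, while on the remaining component — the parabolic boundary of the cylinder $\Omega\times(0,\infty)$ — the positivity requirement is dropped by the last assertion of Theorem~\ref{Comparison2} (alternatively, run the comparison on the finite cylinders $\Omega\times(0,S)$, whose tops lie in $E$ for $S$ large, and let $S\to\infty$). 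This yields $z\le u$ throughout $\Omega_\infty\setminus K$; in particular $0\le z\le u$ in a neighbourhood of $\partial\Omega\times(0,\infty)$, with $z=u=0$ on it.

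Finally I would integrate the equations for $z$ and $u$ over $\Omega_\infty$; since each function vanishes on $\overline\Omega\times\{0\}$ and on $\partial\Omega\times(0,\infty)$ and tends to $0$ as $t\to\infty$, this gives
\begin{equation*}
  \mu_v(K)=\nu(\Omega_\infty)=\mu_z(\Omega_\infty)=\int_0^\infty\!\int_{\partial\Omega}\abs{\partial_n z^m}\,d\sigma\,dt,\qquad \mu_u(\Omega_\infty)=\int_0^\infty\!\int_{\partial\Omega}\abs{\partial_n u^m}\,d\sigma\,dt,
\end{equation*}
the normal derivative of $z^m$ being genuine since $z$ solves the PME near the smooth set $\partial\Omega\times(0,\infty)$. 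Since $z\le u$ near $\partial\Omega\times(0,\infty)$ with both vanishing there, $\abs{\partial_n z^m}\le\abs{\partial_n u^m}$ on $\partial\Omega\times(0,\infty)$, whence $\mu_v(K)\le\mu_u(\Omega_\infty)$. The hard part is precisely this last step: because $u$ is only a supersolution, $\mu_u$ may charge a neighbourhood of the lateral boundary and $\partial_n u^m$ need not exist in the classical sense, so the flux identity for $u$ and the inequality $\abs{\partial_n z^m}\le\abs{\partial_n u^m}$ must be obtained through an exhaustion of $\Omega$ by smooth subdomains together with monotone convergence of the Riesz masses, rather than by a bare divergence theorem; a secondary technical point is the reduction of Lemma~\ref{ComparisonPrinciple} and Theorem~\ref{Comparison2} to this time-unbounded setting, which is handled by first working on the slabs $\Omega\times(0,S)$ and letting $S\to\infty$.
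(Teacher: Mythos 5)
Your reduction has a genuine gap at exactly the step you flag as ``the hard part'', and the proposed repair does not close it. Converting $\mu_u(\Omega_\infty)$ into a lateral boundary flux $\int_0^\infty\int_{\partial\Omega}\abs{\partial_n u^m}\,d\sigma\,dt$ and then comparing normal derivatives requires (i) that $\nabla u^m$ has a trace on $\partial\Omega\times(0,\infty)$ in a strong enough sense and (ii) a Hopf-type pointwise comparison of normal derivatives. Neither is available here: $u$ is only a weak supersolution which vanishes pointwise on $\partial_p\Omega_\infty$; the Caccioppoli estimate of Lemma~\ref{lem:caccioppoli} is interior (the cut-off $\eta$ has compact support in $\Omega$), so you do not even get $\nabla u^m\in L^2$ up to the lateral boundary, let alone a pointwise normal derivative -- this also undermines your application of Theorem~\ref{Comparison2} on $E=\Omega_\infty\setminus K$, which requires $\int_E\abs{\nabla u^m}^2<\infty$, and of Lemma~\ref{ComparisonPrinciple} to $v$, which requires $v^m\in L^2(0,\infty;H^1_0(\Omega))$ (pointwise vanishing of $v$ on the lateral boundary does not give membership in $H^1_0$). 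The suggested exhaustion by smooth subdomains $\Omega'\Subset\Omega$ does not rescue the flux comparison: on $\partial\Omega'\times(0,\infty)$ the functions $z$ and $u$ no longer vanish, so the ordering $z\le u$ gives no inequality between their fluxes there, and the mass balance over $\Omega'\times(0,S)$ then contains boundary terms you cannot control. A further unproven ingredient is the very existence of $z$: the paper never constructs a solution of the measure data problem with datum $\nu=\mu_v|_K$, nor shows that such a $z$ is a continuous weak solution off $K$; the definition of the capacity only quantifies over such functions, it does not supply one.

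For contrast, the paper avoids all boundary-flux considerations. It approximates $v$ from below by solutions $v_i^\epsilon$ of obstacle problems whose obstacles are lifted by $\epsilon$ (so $v_i^\epsilon\ge\epsilon$ everywhere), and works with the supersolution $w_i^\epsilon=\min(v_i^\epsilon,u)$. Near $K$ one has $u>1+\epsilon\ge v_i^\epsilon$, so $\mu_{w_i^\epsilon}=\mu_{v_i^\epsilon}$ there, giving the lower bound $\mu_{v_i^\epsilon}(U)\le\mu_{w_i^\epsilon}(K')$; outside a large compact set $K''$ the universal estimate \eqref{universal} forces $u\le\epsilon\le v_i^\epsilon$, so $w_i^\epsilon=u$ there and testing with a cut-off $\phi''\equiv 1$ on $K''$ yields $\mu_{w_i^\epsilon}(K')\le\mu_u(\Omega_\infty)$. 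The conclusion then follows by two passages to the limit ($\epsilon\to0$, $i\to\infty$) using Lemma~\ref{Convergence} and Theorem~\ref{thm:weak-meas}. If you want to salvage your outline, you would have to replace the normal-derivative comparison by some such purely measure-theoretic localization (e.g.\ a truncation argument of this type), since for a general bounded supersolution $u$ the identity $\mu_u(\Omega_\infty)=\int\abs{\partial_n u^m}$ and the inequality $\abs{\partial_n z^m}\le\abs{\partial_n u^m}$ cannot be justified.
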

\begin{proof}
  Let $0 \leq \psi_i \in C^\infty_0(\Omega_\infty)$ be an increasing
  sequence of smooth obstacles converging to $v$ such that $\psi_i <
  v$ and $\psi_i<\psi_{i+1}$. Denote the perturbed obstacles
  $\psi_i^\epsilon = (\psi_i^m + \epsilon^m)^{1/m}$, and the
  corresponding solutions to the obstacle problem by $v_i^\epsilon$.
  Further, let $v_i=\lim_{\epsilon\to 0}v_i^\epsilon$ be the weak
  supersolutions in $\Omega_\infty$ constructed in 
  Lemma~\ref{lem:Comparison15}. We argue as in \cite[proof of Theorem
  3.2, p. 148--149]{KinnunenLindqvist2} to see that $v_{i}\leq
  v_{i+1}\leq v$ and $v_i\to v$ as $i\to \infty$. Finally, we have that
  \begin{equation*}
    \sup_{\Omega_\infty} v_i^\epsilon = \sup_{\Omega_\infty} \psi_i^\epsilon \leq 1+\epsilon
  \end{equation*}
  and
    \begin{equation*}
      \sup_{\Omega_\infty} v_i = \sup_{\Omega_\infty} \psi_i \leq 1\,.
  \end{equation*}
  We define the supersolution 
  \begin{equation*}
    w_i^\epsilon = \min(v_i^\epsilon,u)\,. 
  \end{equation*}
  By lower semicontinuity, the set $\{u > 1\}$ is open, and $K$ is
  compactly contained in it. This allows us to construct a compact set
  $K'$ and an open set $U$ such that $K \subset U \subset K'\subset
  \{u>1\}$. If $\epsilon$ is small enough we know that $1+\epsilon <
  u$ in $K'$, so that $w_i^\epsilon=v_i^\epsilon$ in $K'$. Hence for
  such $\epsilon$ we have for $\phi' = 1$ on $U$ and $\phi' = 0$
  outside $K'$ that
  \begin{equation}
    \label{MuLowerBound} \mu_{v_i^\epsilon}(U) \leq \int_U \phi' \de \mu_{v_i^\epsilon} = \int_U \phi' \de \mu_{w_i^\epsilon}\leq \mu_{w_i^\epsilon}(K')\,. 
  \end{equation}
  
  Next note that since $u = 0$ on $
  \partial_p \Omega$ and $u\leq \epsilon$ for sufficiently large times
  by \eqref{universal}, there is a compact set $K'' \supset K'$ in
  $\Omega_\infty$ such that in $\Omega_\infty \setminus K''$ we have
  $w_i^\epsilon = u$ since $v_i^\epsilon\geq \epsilon$.  . Hence we
  obtain for $\phi'' \in C_0^\infty(\Omega_\infty)$ such that $\phi'' = 1$ on $K''$ that
  \begin{align*}
    \int_{\Omega_\infty} \phi'' \de\mu_{w_i^\epsilon}=&\int_{\Omega_\infty} -w_i^\epsilon\frac{
      \partial \phi''}{
      \partial t} +\nabla (w_i^\epsilon)^m\spcdot\nabla\phi''\dx\dt\\
    =& \int_{\Omega_\infty}-u^\epsilon\frac{
      \partial \phi''}{
      \partial t} +\nabla u^m\spcdot\nabla\phi''\dx\dt\\=&\int_{\Omega_\infty}\phi''\de\mu_{u}\,. 
  \end{align*}
  Thus we obtain the estimate 
  \begin{equation}
    \label{MuUpperBound} \mu_{w_i^\epsilon}(K')\leq \int_{\Omega_\infty} \phi'' \de \mu_{w_i^\epsilon} = \int_{\Omega_\infty} \phi'' \de \mu_{u} \leq \mu_{u}(\Omega_\infty)\,. 
  \end{equation}
  We combine \eqref{MuLowerBound} and \eqref{MuUpperBound} to get the inequality 
  \begin{equation*}
    \mu_{v_i^\epsilon}(U) \leq \mu_{u}(\Omega_\infty)\,. 
  \end{equation*}
  By construction $v_i^\epsilon\to v_i$ point-wise, thus from Lemma \ref{Convergence} we get that $\mu_{v_i^\epsilon} \to \mu_{v_i}$ weakly.
  By the standard properties of weak convergence of
  measures, see Theorem \ref{thm:weak-meas}, we get that
  \begin{equation*}
    \mu_{v_i}(U) \leq \liminf_{\epsilon \to 0} \mu_{v_i^\epsilon}(U) \leq \mu_{u}(\Omega_\infty)\,. 
  \end{equation*}
  The sequence $(v_i)$ is increasing, and converges point-wise to the
  original supersolution~$v$. Again from Lemma \ref{Convergence} we get the weak convergence of the
  corresponding measures. Another application of Theorem
  \ref{thm:weak-meas} now shows that
	\begin{equation*}
          \mu_{v} (K) \leq \mu_{v}(U) \leq \liminf_{i \to \infty} \mu_{v_i}(U) \leq \mu_{u}(\Omega_\infty)\,, 
	\end{equation*}
	and the proof is complete.
\end{proof}

A consequence of Theorem \ref{Comparison1}, is that in the special
case that we have a decreasing sequence of smooth obstacles converging
to a characteristic function of a compact set, the obstacle problem is
stable. If we had a full elliptic comparison principle, this lemma
would hold for a decreasing sequence of smooth obstacles converging to
an upper semi-continuous obstacle.

\begin{lemma} \label{Stability} 
	Let $K \subset \Omega_\infty$ be a compact set. Let $E_i \Subset \Omega_\infty$, $i=1,\ldots$ be a shrinking sequence of open sets such that $E_{i+1} \Subset E_{i}$
	\begin{equation*}% \nonumber \label{}
		\bigcap_{i=1}^{\infty} \overline E_i = K\,.
	\end{equation*}
	Assume that the non-negative functions $\psi_i:\Omega_\infty
        \to \R$ are supported in $\overline E_i$, satisfy
        \eqref{obstacle} and \eqref{Psi}, and $\psi_i \geq \chi_K$, $i
        = 1,2,\ldots,$ is a decreasing sequence such that $\psi_i \to
        \chi_K$ point-wise in $\Omega_\infty$ as $i \to \infty$. Then
        $R_{\psi_i} \to R_{K}$ point-wise in $\Omega_\infty$ and
        $\mu_{R_{\psi_i}} \to \mu_{R_K}$ weakly as $i \to \infty$.
\end{lemma}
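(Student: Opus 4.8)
The plan is to show that $R_{\psi_i}$ decreases to a limit $w$ with $w\ge R_K$, and then to prove the reverse inequality $w\le R_K$ by running the duality proof of Theorem~\ref{Comparison1}, now with an additional index $i\to\infty$; the measure convergence will follow from Lemma~\ref{Convergence}. Since $\psi_{i+1}\le\psi_i$, the réduites decrease, so $R_{\psi_i}\downarrow w$ pointwise; since $\psi_i\ge\chi_K$ we have $\U_{\psi_i}\subset\U_{\chi_K}$, hence $R_{\psi_i}\ge R_K$ and $w\ge R_K$ everywhere. Each $R_{\psi_i}$ is, after lower semicontinuous regularization (cf.\ Theorem~\ref{thm:bala-bala}), a bounded \lscss{}, hence a weak supersolution, and the family is uniformly bounded by $\sup_{\Omega_\infty}\psi_1$. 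Lemma~\ref{Convergence} then shows that $w$ is a weak supersolution with $\mu_{R_{\psi_i}}\to\mu_w$ weakly, so it suffices to prove $w\le R_K$: then $w=R_K$ and, Riesz measures depending only on the a.e.\ class, $\mu_{R_K}=\mu_w=\lim_i\mu_{R_{\psi_i}}$.

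Fix $v\in\U_{\chi_K}$; we must show $w\le v$. Let $U_i$ be the strong solution to the obstacle problem with obstacle $\psi_i$ (Theorem~\ref{thm:pme-obstacle}): it is continuous, $U_i^m\in L^2(0,\infty;H^1_0(\Omega))$, $U_i\ge\psi_i$, $U_i$ is a weak supersolution, by the comparison principle for the obstacle problem $U_{i+1}\le U_i$, and by Theorem~\ref{lem:obstacle} $U_i$ is a weak solution in $\Omega_\infty\setminus K_i$, where $K_i=\supp\psi_i\cap\{U_i=\psi_i\}$ is a compact subset of $\overline E_i$. Since $U_i\in\U_{\psi_i}$ we have $R_{\psi_i}\le U_i$, hence $w\le W:=\lim_i U_i$, and it is enough to prove $W\le v$. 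Two elementary facts are the crux. First, $\sup_{\Omega_\infty}\psi_i\to1$: the maximum of $\psi_i$ is attained at some $z_i\in\supp\psi_i\subset\overline E_i$, every subsequential limit $z_*$ of $(z_i)$ lies in $\bigcap_i\overline E_i=K$, and for each fixed $i$, continuity of $\psi_i$ together with $\psi_j\le\psi_i$ for $j\ge i$ gives $\psi_i(z_*)\ge\lim_j\sup_{\Omega_\infty}\psi_j$; letting $i\to\infty$ and using $\psi_i(z_*)\to\chi_K(z_*)=1$ and $\sup_{\Omega_\infty}\psi_i\ge1$ forces this limit to be $1$. Second, $\min_{\overline E_i}v\to\min_K v\ge1$: $v$ is lower semicontinuous, so it attains its infimum on each compact $\overline E_i$, the minimizers accumulate in $K$, and $v\ge1$ on $K$. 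Hence, for every $\varepsilon>0$ there is $i_*(\varepsilon,v)$ so that for $i\ge i_*$ one has, on $K_i$,
\[
\frac{U_i}{1+\varepsilon}=\frac{\psi_i}{1+\varepsilon}\le\frac{\sup_{\Omega_\infty}\psi_i}{1+\varepsilon}<\min_{\overline E_i}v\le v ,
\]
and in particular $v>0$ on $K_i$.

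Now fix $\varepsilon>0$ and $i\ge i_*(\varepsilon,v)$. The set $D_{i,\varepsilon}:=\{U_i/(1+\varepsilon)\ge v\}$ is closed (the continuous $U_i/(1+\varepsilon)$ minus the lower semicontinuous $v$ is upper semicontinuous), disjoint from $K_i$ by the above, hence at positive distance from it; cover it by a finite union $Q$ of space-time cylinders disjoint from $K_i$, so that $U_i$ is a genuine weak solution in $Q$. Rerunning the computation \eqref{uepsilon}--\eqref{eq:comp-pf2} of the proof of Theorem~\ref{Comparison1} with $U_i$ in place of $u$, $K_i$ in place of $K$, and $\Omega_\infty$ in place of $\Omega_T$ goes through unchanged: the error term $f=\tfrac{(1+\varepsilon)^{m-1}-1}{(1+\varepsilon)^m}\Delta U_i^m$ is controlled since $U_i^m\in L^2(0,\infty;H^1_0(\Omega))$, and the constant in the analogue of \eqref{eq:comp-pf2} depends only on $\sup_{\Omega_\infty}\psi_1$, the time level $s$, and $\Omega,n,m$ (through the Caccioppoli estimate, Lemma~\ref{lem:caccioppoli}, applied to the uniformly bounded $U_i$ on $\Omega\times(0,s)$), hence is independent of $i$ and $\varepsilon$. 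Thus, for each $s$, each $\varepsilon_0\in(0,\varepsilon)$, and each nonnegative $\phi$ supported in $\{W/(1+\varepsilon_0)\ge v\}\cap\{t\le s\}$ (which is contained in every $D_{i,\varepsilon_0}$ since $U_i\downarrow W$), one obtains
\[
\int_{\R^n\times\{s\}}\Big(\frac{U_i}{1+\varepsilon}-v\Big)\phi\,\dx\le C\,\frac{(1+\varepsilon)^{m-1}-1}{(1+\varepsilon)^m}.
\]
Letting $\varepsilon\to0$ along a sequence with $i=i_*(\varepsilon)\to\infty$ (so $U_i\downarrow W$ and $v\le\sup_{\Omega_\infty}\psi_1$ on $\supp\phi$), dominated convergence gives $\int_{\R^n\times\{s\}}(W-v)\phi\,\dx\le0$; since $W\ge(1+\varepsilon_0)v\ge v$ on $\supp\phi$, the integrand is nonnegative there, whence $W=v$ a.e.\ on $\{W/(1+\varepsilon_0)\ge v\}\cap[\R^n\times\{s\}]$, which (as in the endgame of the proof of Theorem~\ref{Comparison1}) forces $W\le(1+\varepsilon_0)v$ a.e.\ on $\R^n\times\{s\}$. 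Letting $\varepsilon_0\to0$ and then varying $s$ yields $W\le v$ a.e.\ in $\Omega_\infty$, so $w\le v$ a.e.; since $v\in\U_{\chi_K}$ was arbitrary, $w\le R_K$. Combined with the first paragraph this proves $R_{\psi_i}\downarrow R_K$ and $\mu_{R_{\psi_i}}\to\mu_{R_K}$.

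I expect the main obstacle to be exactly this third step: adapting the duality proof of Theorem~\ref{Comparison1} so that it still runs when the continuous supersolution being compared, $U_i$, varies with a second parameter $i\to\infty$. The two delicate points are (a) keeping the exceptional set $\{U_i/(1+\varepsilon)\ge v\}$ off the coincidence set $K_i$, which is why the \emph{quantitative} facts $\sup_{\Omega_\infty}\psi_i\to1$ and $\min_{\overline E_i}v\to\min_K v\ge1$ (Dini's theorem and compactness, rather than mere pointwise convergence) are needed; and (b) checking that the constant in \eqref{eq:comp-pf2} is uniform in $i$, so that $\varepsilon\to0$ and $i\to\infty$ may be taken simultaneously. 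Upgrading the resulting a.e.\ identity $R_{\psi_i}\to R_K$ to a genuinely pointwise one is a minor additional matter, handled by passing to lower semicontinuous representatives, with which all functions involved agree a.e.
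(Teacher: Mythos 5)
Your first paragraph and the two quantitative facts ($\sup_{\Omega_\infty}\psi_i\to 1$, $\min_{\overline E_i}v\to\min_K v\ge 1$) are fine, but the core of your argument---re-running the duality proof of Theorem~\ref{Comparison1} for the pair $(U_i,v)$ and diagonalizing $i=i_*(\varepsilon)\to\infty$ as $\varepsilon\to 0$---has a genuine gap exactly where you claim uniformity in $i$. Because the disjointness of $D_{i,\varepsilon}$ from $K_i$ fails for fixed $i$ and small $\varepsilon$, you are forced to send $i\to\infty$ together with $\varepsilon\to 0$, so the error term \eqref{fbound1} only vanishes if $\|\nabla U_i^m\|_{L^2(\Omega\times(0,s))}$ is controlled uniformly in $i$ (the smallness comes solely from the factor $\frac{(1+\varepsilon)^{m-1}-1}{(1+\varepsilon)^m}$). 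You attribute this to Lemma~\ref{lem:caccioppoli}, but that estimate is interior in space (it requires $\eta\in C_0^\infty(\Omega)$) and cannot control the gradient up to the lateral boundary; and the sets $D^F_{i,\varepsilon,s}$ do in general reach $\partial\Omega\times(0,s)$, since both $U_i$ and $v$ may vanish there, which is precisely why Theorem~\ref{Comparison1} assumes the \emph{global} integrability $u^m\in L^2(0,T;H^1(\Omega))$. The energy estimates for the obstacle problem in \cite{BLS} do not help either, because they scale with the norms in \eqref{obstacle}--\eqref{Psi} of $\psi_i$, which necessarily blow up as $\psi_i\downarrow\chi_K$. The needed uniform bound is in fact true, but proving it requires an extra argument you have not supplied: the Riesz measures $\mu_{U_i}$ are supported in the fixed compact set $\overline E_1\Subset\Omega_\infty$ and have uniformly bounded total mass (by testing with a fixed cutoff and the interior Caccioppoli bound), and then a global energy estimate obtained by testing with $U_i^m$ times a time cutoff, using $U_i^m\in L^2(0,\infty;H^1_0(\Omega))$, gives $\sup_i\int_0^s\int_\Omega|\nabla U_i^m|^2\dxdt<\infty$. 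A secondary soft spot: your comparison yields $W\le v$ only a.e.\ with an exceptional set depending on $v$, and $R_K$ is an infimum over an uncountable family, so the passage to $w\le R_K$ (let alone pointwise) needs more than ``pass to lsc representatives''; the paper sidesteps this by quoting Theorem~\ref{Comparison1} as a black box whose conclusion holds everywhere in $\Omega_T$.

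For comparison, the paper's proof avoids the diagonalization entirely and is much shorter: since $R_{\psi_i}$ is a weak solution in $\{R_{\psi_i}>\psi_i\}\cup(\Omega_\infty\setminus\supp\psi_i)$ by Theorem~\ref{lem:obstacle}, one has $\supp\mu_{R_{\psi_i}}\subset\overline E_i$; by Lemma~\ref{Convergence} the measures converge weakly to $\mu_u$ for the limit $u=\lim_i R_{\psi_i}$, and since the supports shrink to $K$, Theorem~\ref{thm:weak-meas} gives $\supp\mu_u\subset K$, i.e.\ the \emph{limit itself} is a weak solution in $\Omega_\infty\setminus K$. Then a single application of Theorem~\ref{Comparison1} to the fixed pair $(u,v)$ yields $u\le v$ for every admissible $v$, hence $u\le R_K$; no uniform-in-$i$ constants and no simultaneous limits are needed. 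If you want to salvage your route, supply the uniform global energy bound sketched above; otherwise the measure-support argument is the cleaner path to the key inequality.
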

\begin{proof}
  By Theorem \ref{lem:obstacle}, the functions $R_{\psi_i}$ are
  continuous. Thus an application of Lemma \ref{Convergence} shows
  that $u = \lim_{i\to \infty} R_{\psi_i}$ is an upper semicontinuous
  weak super-solution, and the respective measures also converge
  weakly. Further,
	\begin{equation*}
		u \geq R_{K}\,, 
	\end{equation*}
	since $R_{\psi_i}\geq R_K$ for each $i$. 
	
	The lemma now follows if we prove the opposite inequality. To
        this end, note first and that from Theorem \ref{lem:obstacle},
        $R_{\psi_i}$ is a weak solution in $\{R_{\psi_i}>\psi_i\}\cup
        (\Omega_\infty\setminus\supp(\psi_i))$, so that the support of
        the measure $\mu_{R_{\psi_i}}$ is contained in $\supp(\psi_i)
        \subset \overline E_i$. These sets shrink to $K$, and the
        measures $\mu_{R_{\psi_i}}$ converge weakly to $\mu_u$. Thus
        $\supp(\mu_u) \subset K$, which implies that $u$ is a weak
        solution in $\Omega_\infty\setminus K$.

	If now $v \geq \chi_K$ is an arbitrary \lscss\ with $v=0$ on $
	\partial_p \Omega_\infty$, it follows from Theorem \ref{Comparison1} that $u \leq v$. We take the infimum over $v$ to get that 
	\begin{equation*}
		u \leq R_{K}\,, 
	\end{equation*}
	and the proof is complete. 
\end{proof}
A consequence of the stability Lemma \ref{Stability} is that we have
stability of the \emph{balayage} with respect to decreasing sequences of compact sets.
\begin{lemma}\label{Stability2}
	Let $K_i \subset \Omega_\infty$, $i=1,2,\ldots$, be a decreasing sequence of compact sets and denote $K = \cap_{i=1}^\infty K_i$. Then $\hat R_{K_i}$ is a decreasing sequence converging to $\hat R_{K}$, moreover $\mu_{\hat R_{K_i}}$ converges to $\mu_{\hat R_K}$, weakly as $i \to \infty$.
\end{lemma}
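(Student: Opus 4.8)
The plan is to reduce the statement about decreasing sequences of compact sets to the stability result of Lemma \ref{Stability} by interposing, between consecutive $K_i$'s, a sequence of smooth obstacles of the type allowed there. First I would record the monotonicity: since $K_{i+1}\subset K_i$ we have $\chi_{K_{i+1}}\leq \chi_{K_i}$, hence $\U_{\chi_{K_i}}\subset \U_{\chi_{K_{i+1}}}$ and therefore $R_{K_{i+1}}\leq R_{K_i}$; passing to the $\essliminf$-regularization preserves this, so $(\widehat R_{K_i})$ is a decreasing sequence. Denote its pointwise limit by $w$. Since each $\widehat R_{K_i}$ is a \lscss{} (Theorem \ref{thm:bala-bala}), and in fact, by Theorem \ref{lem:obstacle} (applied to smooth obstacles approximating $\chi_{K_i}$ as in Lemma \ref{Stability}), the $\widehat R_{K_i}$ are continuous weak supersolutions, Lemma \ref{Convergence} gives that $w$ is an (upper semicontinuous) weak supersolution and that $\mu_{\widehat R_{K_i}}\to \mu_w$ weakly. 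One also gets immediately $w\geq \widehat R_K$ because $\widehat R_{K_i}\geq \widehat R_K$ for every $i$ (as $K\subset K_i$). So, exactly as in Lemma \ref{Stability}, everything hinges on the reverse inequality $w\leq \widehat R_K$.

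To get the reverse inequality, the key point is to locate the support of $\mu_w$ inside $K$. For each $i$, one may realize $\widehat R_{K_i}$ as the decreasing limit (over a suitable approximation of $\chi_{K_i}$ by smooth obstacles $\psi_{i,j}$ supported in shrinking neighborhoods of $K_i$) of continuous obstacle solutions, and, as shown in the proof of Lemma \ref{Stability}, the measure $\mu_{\widehat R_{K_i}}$ is then supported in $K_i$. Since $K_i$ decreases to $K$ and $\mu_{\widehat R_{K_i}}\to \mu_w$ weakly, Theorem \ref{thm:weak-meas} (the compact-set formulation) yields $\mu_w(\Omega_\infty\setminus K)=0$, i.e. $\supp(\mu_w)\subset K$; consequently $w$ is a weak solution in $\Omega_\infty\setminus K$. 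Now take any \lscss{} $v\in\U_{\chi_K}$ with $v=0$ on $\partial_p\Omega_\infty$. Arguing as in Lemma \ref{Stability}: $w$ is a continuous (in $\overline\Omega_\infty$, after the usual care near the boundary) weak supersolution which is a weak solution off $K$, and on $K$ we have $w\geq \chi_K = 1 > 0$ while $v\geq 1\geq w$ on $K$ fails in general — so instead one applies Theorem \ref{Comparison1} with the roles chosen so that the compact set is $K$, using that $w\leq v$ on $K\cup \partial_p\Omega_\infty$ (indeed $w=0=v$ on $\partial_p\Omega_\infty$, and on $K$ one uses $v\geq 1$ together with $w\leq 1$, say after noting $\sup w\leq 1$ as in Lemma \ref{Comparison3}) and that $v>0$ on $K$. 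This gives $w\leq v$; taking the infimum over such $v$ and then the $\essliminf$-regularization, $w\leq \widehat R_K$. Combined with $w\geq\widehat R_K$, we conclude $w=\widehat R_K$, and the weak convergence of the measures has already been established.

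The main obstacle I anticipate is the bookkeeping needed to apply Theorem \ref{Comparison1} cleanly: one must verify that $w$ (a decreasing limit of continuous weak supersolutions, hence upper semicontinuous) actually has the regularity required there — in particular $w^m\in L^2(0,T;H^1(\Omega))$, continuity on $\Omega_T\setminus K$, and the right boundary behavior — and that the strict positivity $v>0$ on $K$ and the ordering $w\leq v$ on $K\cup\partial_p\Omega_\infty$ hold. The $H^1$-bound should follow from the Caccioppoli estimate Lemma \ref{lem:caccioppoli} applied uniformly to the approximating continuous supersolutions (they are uniformly bounded by roughly $1$), exactly the mechanism used inside Lemma \ref{Convergence}; continuity off $K$ comes from $w$ being a weak solution there plus local boundedness; and the estimate $\sup_{\Omega_\infty} w\leq 1$ (needed to compare with $v\leq$ generic value on $K$, and to keep $b$ bounded in the proof of Theorem \ref{Comparison1}) follows since $\sup R_{K_i}\leq 1$. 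Once these technical points are in place, the structure is a verbatim repetition of the argument in Lemma \ref{Stability}, with "$\psi_i$" replaced by "$\chi_{K_i}$" and Lemma \ref{Stability} itself invoked to handle each individual $\widehat R_{K_i}$.
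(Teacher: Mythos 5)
Your argument reaches the correct conclusion, but by a genuinely different route than the paper. The paper never passes to the limit of the $\hat R_{K_i}$ directly: it interposes a single sequence of smooth obstacles $\psi_i=[\hat\psi_i]^2$, $\hat\psi_i\in C_0^\infty(\overline E_i)$ with $\hat\psi_i=1$ on $K_i$, where the neighborhoods $E_i=\{d((x,t);K_i)<c/i\}$ shrink to $K$; these $\psi_i$ satisfy the hypotheses of Lemma \ref{Stability} for the \emph{limit} set $K$, so $R_{\psi_i}\to R_K$, and the sandwich $R_K\leq R_{K_i}\leq R_{\psi_i}$ gives the pointwise convergence immediately, the weak convergence of the measures then following from Lemma \ref{Convergence}. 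You instead re-run the internal machinery of Lemma \ref{Stability} for the limit $w$ of the $\hat R_{K_i}$: Lemma \ref{Convergence}, confinement of $\supp\mu_w$ in $K$ (so that $w$ solves the PME off $K$), and a fresh application of Theorem \ref{Comparison1} against an arbitrary admissible $v$ to force $w\leq R_K$. Your route avoids constructing the intermediate obstacles for the limit set, but it is more work, since all hypotheses of Theorem \ref{Comparison1} must be re-verified for $w$, whereas the paper's squeeze uses Lemma \ref{Stability} as a black box.

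Two points in your sketch need repair, though both are fixable. First, the claim that the $\hat R_{K_i}$ are \emph{continuous} is unjustified: Theorem \ref{lem:obstacle} concerns strong solutions for regular obstacles, and what the proof of Lemma \ref{Stability} actually yields is that $R_{K_i}$ (no hat) is a decreasing limit of continuous obstacle solutions, hence upper semicontinuous, and a weak supersolution; its lower semicontinuous regularization need not be continuous. You should run the limit argument with the representatives $R_{K_i}$ (which carry the same Riesz measures as $\hat R_{K_i}$), so that $w=\lim_i R_{K_i}$ is upper semicontinuous as Theorem \ref{Comparison1} requires; the translation back to the hatted functions is then exactly as (im)precise as in the paper, whose own proof establishes $R_{K_i}\to R_K$ and identifies this with the statement for $\hat R$. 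Second, to pass from $\supp\mu_{R_{K_i}}\subset K_i$ to $\supp\mu_w\subset K$ you need the open-set lower semicontinuity statement of Theorem \ref{thm:weak-meas}, applied to open sets compactly contained in $\Omega_\infty\setminus K$ (which miss $K_i$ for large $i$), rather than the compact-set upper bound you cite, which goes in the wrong direction; with that substitution the step goes through, and the rest of your outline (boundedness $w\leq 1$, the Caccioppoli-based gradient bound, zero lateral/initial behavior, and the final comparison) matches the level of detail of the paper's own treatment in Lemma \ref{Stability}.
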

\begin{proof}
	Let us construct $E_i = \{d((x,t);K_i) < c/i\}$, $i=1,\ldots$, for a small constant $c < 1$ such that $\overline E_1 \subset \Omega_\infty$, then the sequence $E_i$ satisfies the requirements of Lemma \ref{Stability}. 
	
	Let us now construct smooth functions $\hat \psi_i \in C_0^\infty(\overline E_i)$ such that $\hat \psi_i = 1$ on $K_i$, then let $\psi_i = [\hat \psi]^2$, and we have that $R_{\psi_i} \geq R_{K_i}$ by construction. As in the proof of Theorem \ref{thm:approxpox}, the sequence $\psi_i$ will satisfy \eqref{obstacle} and \eqref{Psi}. It is now clear that the sequence $\psi_i$ satisfies all requirements of Lemma \ref{Stability} and thus we get that $R_{\psi_i} \to R_K$ and consequently also $R_{K_i} \to R_K$, furthermore using Lemma \ref{Convergence} we see that the measures $\mu_{R_{K_i}}$ converge weakly to $\mu_{R_K}$ as $i \to \infty$. 
\end{proof}

\begin{theorem}
	\label{thm:extremal} Let $K$ be a compact subset of $\Omega_\infty$. Then 
	\begin{equation*}
		\capacity(K) = \mu_{\hat R_{K}}(K)\,. 
	\end{equation*}
\end{theorem}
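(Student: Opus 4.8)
The plan is to prove the two inequalities $\capacity(K)\le \mu_{\hat R_K}(K)$ and $\capacity(K)\ge \mu_{\hat R_K}(K)$ separately, using Lemma~\ref{Comparison3} for the first and a direct construction (together with Lemma~\ref{Stability2}) for the second. For the upper bound, I would start from the definition of $\capacity(K)$: any admissible measure $\mu$ has $\supp(\mu)\subset K$ and a corresponding $u_\mu$ with $0\le u_\mu\le 1$, $u_\mu=0$ on $\partial_p\Omega_\infty$, solving $(u_\mu)_t-\Delta u_\mu^m=\mu$. To apply Lemma~\ref{Comparison3} I need a competitor $u$ on the ``$u>1$ in $K$'' side that is a weak solution after some time $T$ with $K\Subset\Omega_T$. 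The natural choice is a slightly dilated balayage: pick a compact $K'$ with $K\Subset \mathrm{int}(K')\Subset\Omega_{T}$ for suitable $T$, and take $u = (1+\delta)\hat R_{K'}$ — or better, use $\hat R_{\tilde K}$ for $\tilde K$ a small closed neighbourhood of $K$ and invoke Lemma~\ref{lem:obstacle}/Lemma~\ref{SuppLem} to know $\hat R_{\tilde K}$ is a weak solution outside $\tilde K\cap\{\hat R_{\tilde K}=\chi_{\tilde K}\}$, hence (after the universal estimate forces it below $1$ eventually) a weak solution after a finite time. Then $u>1$ on $K$ (as $K$ is in the interior where the balayage of the neighbourhood equals $1$... this needs care, see below), $0\le u_\mu\le 1$ plays the role of $v$, and Lemma~\ref{Comparison3} gives $\mu_{u_\mu}(K)\le\mu_u(\Omega_\infty)$, i.e. $\mu(\Omega_\infty)=\mu(K)=\mu_{u_\mu}(K)\le \mu_u(\Omega_\infty)$. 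Taking the supremum over $\mu$ and then letting the neighbourhood shrink to $K$ via Lemma~\ref{Stability2} (so $\mu_u(\Omega_\infty)\to\mu_{\hat R_K}(\Omega_\infty)=\mu_{\hat R_K}(K)$, the last equality because $\supp\mu_{\hat R_K}\subset K$) yields $\capacity(K)\le\mu_{\hat R_K}(K)$.

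For the lower bound, the idea is to exhibit $\hat R_K$ itself (suitably truncated/approximated) as an admissible measure for the sup defining $\capacity(K)$. We know $0\le \hat R_K\le 1$ since $\hat R_K$ is the balayage of $\chi_K$ and the constant $1$ is a supersolution above $\chi_K$; we know $\hat R_K=0$ on $\partial_p\Omega_\infty$ (it is a \lscss{} below $1$ and above $0$, vanishing there); and from Lemma~\ref{Stability2}/Lemma~\ref{Stability} we know $\supp(\mu_{\hat R_K})\subset K$ and $\hat R_K$ is a weak solution in $\Omega_\infty\setminus K$. So $\mu_{\hat R_K}$ is (essentially) an admissible measure with $u_\mu=\hat R_K$, giving $\capacity(K)\ge \mu_{\hat R_K}(\Omega_\infty)=\mu_{\hat R_K}(K)$. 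The only technical gap is that the definition of $\capacity$ asks for $u_\mu$ to be a \emph{weak solution} to the measure data problem $(u_\mu)_t-\Delta u_\mu^m=\mu_{\hat R_K}$ with that precise measure, which is automatic from the definition of the Riesz measure, and that $0\le u_\mu\le 1$, which we just noted; one should also check the boundary condition is taken in the right ($L^2(0,\infty;H^1_0)$) sense, which follows by approximating $\hat R_K$ by the $R_{K_i}$ or $w_j$ that live in $H^1_0$.

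The step I expect to be the main obstacle is the upper bound, specifically making rigorous the claim that a dilated/neighbourhood balayage can be used as the ``$u>1$ on $K$'' competitor required by Lemma~\ref{Comparison3} while simultaneously being a weak solution after a finite time $T$ with $K\Subset\Omega_T$. The tension is: $\hat R_K$ equals $1$ only on (roughly) $K$ itself and need not exceed $1$ anywhere, so I genuinely need a strict neighbourhood $\tilde K\supset K$ and the fact $\hat R_{\tilde K}\equiv 1$ on a neighbourhood of $K$ — which requires knowing the balayage of a ``fat'' compact set attains its obstacle in the interior, a point that should follow from $\hat R_{\tilde K}\ge\chi_{\tilde K}$ everywhere together with the upper bound $\hat R_{\tilde K}\le 1$. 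Then Lemma~\ref{lem:obstacle} (applied to a smooth obstacle $\psi_i$ squeezed between $\chi_{\tilde K}$ and $\chi_{\tilde K'}$ as in Lemma~\ref{Stability2}) tells us the relevant solution is a weak solution off a compact subset of $\tilde K'$, and the universal estimate \eqref{universal} forces it below any threshold for large times; choosing $T$ past that time and past $\sup\{t:(x,t)\in\tilde K'\}$ gives $K\Subset\Omega_T$ and weak-solution-after-$T$, as Lemma~\ref{Comparison3} demands. After that, the passage to the limit as $\tilde K\downarrow K$ is handled cleanly by Lemma~\ref{Stability2} and Theorem~\ref{thm:weak-meas}, and the matching lower bound makes the two estimates meet.
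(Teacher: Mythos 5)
Your lower bound is essentially the paper's: $\hat R_K$ is a \lscss{} with $0\le \hat R_K\le 1$, it is a weak solution off $K$ so $\supp\mu_{\hat R_K}\subset K$, and hence $\mu_{\hat R_K}(K)\le \capacity(K)$ directly from the definition. The problem is in your upper bound, and it is exactly at the point you flag as "the main obstacle": the competitor you propose for Lemma~\ref{Comparison3} does not satisfy its hypotheses, and neither of your two candidate fixes repairs this. Lemma~\ref{Comparison3} requires $u>1$ \emph{strictly} on $K$ (its proof uses lower semicontinuity to get $u\ge 1+\epsilon$ on a compact neighbourhood $K'$, which is where $w_i^\epsilon=v_i^\epsilon$ is forced); but the balayage $\hat R_{\tilde K}$ of a characteristic function satisfies $\hat R_{\tilde K}\le 1$ everywhere, since the constant $1$ is a supersolution lying above $\chi_{\tilde K}$. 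So even with a fat neighbourhood $\tilde K$, on which $\hat R_{\tilde K}\equiv 1$ near $K$, you only get $u=1$ on $K$, never $u>1$. Your alternative $u=(1+\delta)\hat R_{K'}$ fails for a different reason: the PME is not homogeneous, so multiplying a (super)solution by $1+\delta$ destroys the (super)solution property --- one gets an error term proportional to $((1+\delta)^{m-1}-1)\Delta u^m$ of uncontrolled sign. This is precisely the difficulty that motivates the whole machinery of Theorem~\ref{Comparison1}, where the multiplicative perturbation is performed \emph{inside} a carefully controlled duality argument, not on a competitor fed into Lemma~\ref{Comparison3}, whose statement simply does not accept such a function (it must be an honest weak supersolution in $\Omega_\infty$, continuous up to the closure, and a weak solution after a finite time).

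The paper's resolution is to manufacture competitors that genuinely exceed $1$ on $K$ by using the obstacle problem rather than a balayage of a characteristic function: take shrinking open sets $E_i\subset K'$ with $\bigcap_i\overline E_i=K$, smooth obstacles $\psi_i=\hat\psi_i^2$ supported in $\overline E_i$, decreasing to $\chi_K$, with $\psi_i=1+2^{-i}$ on $K$ (so \eqref{obstacle} and \eqref{Psi} hold), and let $u_i$ be the corresponding strong solutions. Then $u_i\ge\psi_i>1$ on $K$, $u_i$ is continuous, vanishes on $\partial_p\Omega_\infty$, and by Theorem~\ref{lem:obstacle} it is a weak solution outside $\supp(\psi_i)\cap\{u_i=\psi_i\}$, hence after a finite time; so Lemma~\ref{Comparison3} applies with $v=u_\mu$ and yields $\mu(K)=\mu_v(K)\le\mu_{u_i}(\Omega_\infty)=\mu_{u_i}(K')$, the last equality because $\supp\mu_{u_i}\subset\overline E_i\subset K'$. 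The limit passage is then done with Lemma~\ref{Stability} (not merely Lemma~\ref{Stability2}): $\mu_{u_i}\to\mu_{\hat R_K}$ weakly, and the upper semicontinuity on compact sets in Theorem~\ref{thm:weak-meas} gives $\limsup_i\mu_{u_i}(K')\le\mu_{\hat R_K}(K')=\mu_{\hat R_K}(K)$. Note also a minor but real point in your write-up: weak convergence of measures does not by itself give $\mu_{u}(\Omega_\infty)\to\mu_{\hat R_K}(\Omega_\infty)$ (total masses need not converge); one must, as above, work on a fixed compact $K'$ containing all the supports and use the compact-set inequality of Theorem~\ref{thm:weak-meas}. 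With the obstacle-based competitors replacing your dilated balayage, the rest of your outline (taking the supremum over admissible $\mu$ and matching with the lower bound) goes through as in the paper.
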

\begin{proof}
	Since $\hat R_K$ is a \lscss\ such that $0 \leq \hat R_K \leq 1$, it follows immediately from the definition of the PME capacity that 
	\begin{equation*}
		\mu_{\hat R_K} (K) \leq \capacity(K)\,, 
	\end{equation*}
	since $\hat R_K$ is a solution outside $K$.
	
	To prove the opposite inequality, let first $K' \Subset \Omega_\infty$ be a compact set such that $K \Subset K'$. To be able to use Lemma \ref{Stability} we will let $E_i \subset K'$, $i=1,\ldots$ be a shrinking sequence of open sets such that 
		\begin{equation} \nonumber \label{}
			\bigcap_{i=1}^{\infty} \overline E_i = K\,.
		\end{equation}
	Let $\hat \psi_i \in C_0^{\infty}(\overline E_i)$, $i=1,\ldots$, be a decreasing sequence of smooth functions converging to $\chi_K$ point-wise in $\Omega_\infty$ as $i \to \infty$, and such that 
	\begin{equation*}
		\hat \psi_i = \sqrt{1+\frac{1}{2^i}} \quad \text{ on $K$}\,. 
	\end{equation*}
	Consider now the functions $\psi_i = [\hat \psi_i]^2$, then $\psi_i^m \in C_0^2(\overline E_i)$, and it is a decreasing sequence of functions converging to $\chi_K$ point-wise in $\Omega_\infty$ as $i \to \infty$, such that 
	\begin{equation*}
		\psi_i = 1+\frac{1}{2^i} \quad \text{ on $K$}\,,
	\end{equation*}
	moreover $\psi_i$ satisfies \eqref{obstacle} and \eqref{Psi} for all $m > 1$.
	Denote by $u_i$ the corresponding solutions to the obstacle problems with obstacle $\psi_i$. Let now $v$ be a weak supersolution in $\Omega_\infty$ such that $0 \leq v \leq 1$ and $v = 0$ on $
	\partial_p \Omega_\infty$. Then it follows from Lemma \ref{Comparison3} that 
	\begin{equation*}
		\mu_v(K) \leq \mu_{u_i}(\Omega_\infty) = \mu_{u_i}(K')\,. 
	\end{equation*}
	We use Lemma \ref{Stability} to see that $\mu_{u_i} \to \mu_{\hat R_K}$ weakly. 
	The claim now follows from the above estimate, since 
	\begin{equation*}
		\lim\sup_{i \to \infty} \mu_{u_i}(K') \leq \mu_{\hat R_K} (K') = \mu_{\hat R_K} (K) 
	\end{equation*}
	by Theorem \ref{thm:weak-meas}. 
\end{proof}

We have now developed all the technical tools needed to establish the basic properties of the PME capacity, including that it is a regular, subadditive capacity.

\begin{theorem} \label{thm_prop}
	The PME capacity has the following properties.
	\begin{enumerate}
		\item \label{subadd} {\em Countable subadditivity:} In other words if $E_i$, $i=1,2,\ldots,$ be arbitrary subsets of $\Omega_\infty$ and $E = \cup_{i=1}^\infty E_i$, one has 
	\begin{equation*}
		\capacity(E) \leq \sum_{i=1}^\infty \capacity(E_i)\,. 
	\end{equation*}
		\item \label{incstab}{\em Stability with respect to increasing sequences of sets:} Let $E_i$, $i=1,2,\ldots,$ be arbitrary subsets of $\Omega_\infty$ with the property $E_1 \subset E_2 \subset \cdots.$ and denote $E = \cup_{i=1}^\infty E_i$. Then 
	\begin{equation*}
		\lim_{i \to \infty} \capacity(E_i) = \capacity(E)\,. 
	\end{equation*}
		\item \label{decstab}{\em Stability with respect to decreasing sequences of compact sets:} Let $K_i \subset \Omega_\infty$, $i=1,2,\ldots$, be a decreasing sequence of compact sets and denote $K = \cap_{i=1}^\infty K_i$. Then 
	\begin{equation*}
		\lim_{i \to \infty} \capacity(K_i) = \capacity(K)\,. 
	\end{equation*}
	\item \label{openrep} Let $U \Subset \Omega_\infty$ be an open set. Then 
	\begin{equation*}
		\capacity(U) = \mu_{R_{U}}(\Omega_\infty)\,. 
	\end{equation*}
	\end{enumerate}
\end{theorem}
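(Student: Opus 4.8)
The plan is to first record two easy facts and to use the equivalent description of the capacity in which the competing measure $\mu$ is merely \emph{carried by} $E$ (i.e.\ $\mu(\Omega_\infty\setminus E)=0$) rather than having topological support in $E$; the two give the same supremum, since restricting $\mu$ to a compact $C\subset E$ of nearly full mass (inner regularity of Radon measures) produces a competitor with honest compact support inside $E$. The two facts are \emph{monotonicity} of $\capacity$, immediate from the definition, and the \emph{comparison principle for the measure data problem}: if $\nu\le\mu$ and $u_\mu$ is bounded with $u_\mu=0$ on $\partial_p\Omega_\infty$, then $u_\nu$ exists with $0\le u_\nu\le u_\mu$, by Lemma~\ref{ComparisonPrinciple} applied to $\mu_{u_\nu}=\nu\le\mu=\mu_{u_\mu}$. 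For \eqref{subadd} I would disjointify: with $F_1=E_1$, $F_i=E_i\setminus\bigcup_{k<i}E_k$ and $\mu_i:=\mu\llcorner F_i$ one has $\mu=\sum_i\mu_i$, each $\mu_i\le\mu$, hence $0\le u_{\mu_i}\le u_\mu\le1$ and $\mu_i$ is a competitor for $\capacity(E_i)$; summing $\mu_i(\Omega_\infty)\le\capacity(E_i)$ and taking the supremum over $\mu$ gives the claim. For \eqref{incstab}, monotonicity gives $\lim_i\capacity(E_i)\le\capacity(E)$; conversely $\mu_i:=\mu\llcorner E_i$ satisfies $\mu_i(\Omega_\infty)=\mu(E_i)\nearrow\mu(\Omega_\infty)$ (continuity of measure from below) and $u_{\mu_i}\le u_\mu\le1$, so $\mu_i(\Omega_\infty)\le\capacity(E_i)$; let $i\to\infty$, then take the supremum over $\mu$.

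For \eqref{decstab} I would invoke the extremal characterization. Since $\widehat R_{K_i}$ is a weak solution off $K_i$, its Riesz measure is carried by $K_i$, so $\capacity(K_i)=\mu_{\widehat R_{K_i}}(K_i)=\mu_{\widehat R_{K_i}}(\Omega_\infty)$ by Theorem~\ref{thm:extremal}. By Lemma~\ref{Stability2}, $\widehat R_{K_i}\searrow\widehat R_K$ and $\mu_{\widehat R_{K_i}}\to\mu_{\widehat R_K}$ weakly; all these measures live in the fixed compact $K_1$, so Theorem~\ref{thm:weak-meas} gives $\limsup_i\mu_{\widehat R_{K_i}}(\Omega_\infty)=\limsup_i\mu_{\widehat R_{K_i}}(K_1)\le\mu_{\widehat R_K}(K_1)=\mu_{\widehat R_K}(\Omega_\infty)$ while $\mu_{\widehat R_K}(\Omega_\infty)\le\liminf_i\mu_{\widehat R_{K_i}}(\Omega_\infty)$. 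Since $\capacity(K_i)$ is decreasing (monotonicity), the limit exists and equals $\mu_{\widehat R_K}(\Omega_\infty)=\mu_{\widehat R_K}(K)=\capacity(K)$ by Theorem~\ref{thm:extremal}.

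For \eqref{openrep} I would fix a compact exhaustion $K_j\Subset K_{j+1}$ with $\bigcup_jK_j=U$. The key preliminary is the increasing counterpart of Lemma~\ref{Stability2}: the $\widehat R_{K_j}$ increase and are bounded by $1$, hence converge pointwise to a weak supersolution $g$ with $\mu_{\widehat R_{K_j}}\to\mu_g$ weakly by Lemma~\ref{Convergence}, and $g=R_U$ a.e.\ — so $\mu_g=\mu_{R_U}$. To see the last point, note that every $v\in\U_{\chi_U}$ has $v\ge1$ on $\overline U$ by lower semicontinuity (as $v\ge1$ on the open set $U$), so $R_U=1$ on $\overline U$; meanwhile a lower semicontinuous representative $\widehat g$ of $g$ (Lemma~\ref{thm:lsc:intro}) is a semicontinuous supersolution coinciding with its $\essliminf$-regularization, hence $\widehat g=1$ on $U$, so $\widehat g\in\U_{\chi_U}$ and $\widehat g\ge R_U$; combined with $\widehat g=g\le R_U$ a.e.\ (from $\widehat R_{K_j}=R_{K_j}\le R_U$ a.e.), this forces $g=R_U$ a.e. The formula then follows by squeezing. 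On one side, $\Omega_\infty$ is open, so weak convergence and Theorem~\ref{thm:weak-meas} give $\mu_{R_U}(\Omega_\infty)\le\liminf_j\mu_{\widehat R_{K_j}}(\Omega_\infty)=\liminf_j\capacity(K_j)\le\capacity(U)$, using Theorem~\ref{thm:extremal} and monotonicity. On the other, if $\mu$ is a competitor for $\capacity(U)$ then $F:=\supp\mu\subset U$ is compact, so $F\subset K_j$ for large $j$ and $\mu(\Omega_\infty)=\mu(F)\le\capacity(F)\le\capacity(K_j)=\mu_{\widehat R_{K_j}}(\overline U)$; as $\overline U$ is compact, $\limsup_j\capacity(K_j)\le\mu_{R_U}(\overline U)\le\mu_{R_U}(\Omega_\infty)$ by Theorem~\ref{thm:weak-meas}, hence $\mu(\Omega_\infty)\le\mu_{R_U}(\Omega_\infty)$; taking the supremum over $\mu$ finishes the proof (and incidentally also shows $\lim_j\capacity(K_j)=\capacity(U)$).

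I expect the main obstacle to be exactly this identification in \eqref{openrep}: that the increasing limit of the balayages $\widehat R_{K_j}$ over a compact exhaustion of $U$ agrees with $R_U$ up to a null set, which is all that is needed for the Riesz measures. Here one must keep careful track of the distinction between $R_U$, its $\essliminf$-regularization, and the everywhere-versus-a.e.\ form of the inequality $v\ge\chi_U$; the argument hinges on a semicontinuous supersolution being equal to its own $\essliminf$-regularization and therefore inheriting the pointwise lower bound $1$ throughout the open set $U$. Everything else is bookkeeping with the comparison principle for the measure data problem and with weak convergence of measures via Theorem~\ref{thm:weak-meas}.
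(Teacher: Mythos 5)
Your treatment of parts \eqref{decstab} and \eqref{openrep} is essentially the paper's own route: \eqref{decstab} is exactly Theorem \ref{thm:extremal} combined with Lemma \ref{Stability2} and Theorem \ref{thm:weak-meas}, and \eqref{openrep} is the compact-exhaustion argument via Theorem \ref{thm:extremal} and Lemma \ref{Convergence} (the paper delegates it to the analogue of \cite[Lemma 5.9]{KKKP}); your identification of the increasing limit of the $\widehat R_{K_j}$ with $R_U$ up to a null set is the right point to worry about and your argument for it is sound, using that the limit is lower semicontinuous, lies above $\chi_U$, and is squeezed against $R_U$.

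The genuine gap is in \eqref{subadd} and \eqref{incstab}. A measure is an admissible competitor for $\capacity(E_i)$ only if there \emph{exists} a weak supersolution $u_{\mu_i}$ with $0\le u_{\mu_i}\le 1$, vanishing on $\partial_p\Omega_\infty$, whose Riesz measure is exactly $\mu_i$. You take $\mu_i$ to be the restriction of $\mu$ to $F_i$ (or to $E_i$, or to a compact $C$ of nearly full mass) and assert that ``$u_{\mu_i}$ exists with $0\le u_{\mu_i}\le u_\mu$ by Lemma \ref{ComparisonPrinciple}''. Lemma \ref{ComparisonPrinciple} is only a comparison statement between two \emph{given} supersolutions; it produces no solution to the measure data problem with datum $\mu_i$. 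The needed ingredient is an existence theorem: if $\nu\le\mu_u$ for a bounded weak supersolution $u$ with zero parabolic boundary values, then there is a bounded weak supersolution $u_\nu$ with Riesz measure $\nu$ (and then Lemma \ref{ComparisonPrinciple} orders $u_\nu\le u$). This is precisely the content hidden in the paper's phrase ``the methods developed in \cite{KKKP}'', where for the parabolic $p$-Laplacian such dominated measures are shown to be admissible (via obstacle-problem/approximation arguments); a PME analogue must be supplied, and without it your proofs of \eqref{subadd} and \eqref{incstab} do not close, since the restricted measures are not known to be competitors at all. A secondary technical point in the same step: Lemma \ref{ComparisonPrinciple} also assumes $u^m,v^m\in L^2(0,\infty;H^1_0(\Omega))$, which is not part of the admissibility condition in the definition of $\capacity$ and has to be justified (e.g.\ from boundedness, the zero lateral boundary values and Lemma \ref{lem:caccioppoli}) before the comparison can be applied.
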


\begin{proof}
	From the methods developed in \cite{KKKP} we see that \eqref{subadd} and \eqref{incstab} follow from Lemma \ref{ComparisonPrinciple}. 
	Property \eqref{decstab} is a consequence of Theorem \ref{thm:extremal} and Lemma \ref{Stability2}. Property \eqref{openrep} follows from \eqref{incstab}, Theorem \ref{thm:extremal}, and Lemma \ref{Convergence} as in \cite[Lemma 5.9]{KKKP}.
\end{proof}

In conclusion we have established more than enough to say that Borel sets are Choquet capacitable:

\begin{theorem}
	The PME capacity is Choquet capacitable (inner regular). This means that for all Borel sets $E \subset \Omega_\infty$ it holds that 
	\begin{equation*}
		\capacity(E) = \sup \{\capacity(K) : K \subset E, K \text{ compact} \}\,. 
	\end{equation*}
\end{theorem}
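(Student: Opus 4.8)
The plan is to deduce the statement from Choquet's capacitability theorem, all of whose hypotheses have already been established above. Recall that a set function $C$ defined on all subsets of a locally compact, second countable Hausdorff space is a \emph{Choquet capacity} provided it is monotone ($A\subset B\Rightarrow C(A)\leq C(B)$), continuous along increasing sequences of arbitrary sets ($A_i\uparrow A\Rightarrow C(A_i)\to C(A)$), and continuous along decreasing sequences of compact sets ($K_i\downarrow K$ with each $K_i$ compact $\Rightarrow C(K_i)\to C(K)$); the theorem then asserts that every analytic, and in particular every Borel, subset $E$ is capacitable, meaning $C(E)=\sup\{C(K):K\subset E,\ K\text{ compact}\}$.

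First I would check monotonicity of $\capacity$, which is immediate from the definition: if $E_1\subset E_2$, then every admissible pair $(\mu,u_\mu)$ with $\supp(\mu)\subset E_1$ is also admissible for $E_2$, so the supremum defining $\capacity(E_1)$ is taken over a subcollection of the one defining $\capacity(E_2)$. (This is also subsumed by the countable subadditivity of Theorem~\ref{thm_prop}\,\eqref{subadd}.) Continuity along increasing sequences of arbitrary subsets of $\Omega_\infty$ is precisely Theorem~\ref{thm_prop}\,\eqref{incstab}, and continuity along decreasing sequences of compact sets is precisely Theorem~\ref{thm_prop}\,\eqref{decstab}. Hence $\capacity$ is a Choquet capacity on $\Omega_\infty$.

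Since $\Omega_\infty$ is an open subset of $\R^{n+1}$, it is a locally compact, second countable metric space (in particular Polish), so that every Borel subset of $\Omega_\infty$ is analytic (Suslin). Applying Choquet's capacitability theorem to $\capacity$ then yields
\[
\capacity(E)=\sup\{\capacity(K):K\subset E,\ K\text{ compact}\}
\]
for every Borel set $E\subset\Omega_\infty$, which is the assertion.

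The only point that requires a moment's attention — and the closest thing to an obstacle in an otherwise purely formal argument — is to match the precise hypotheses of the version of Choquet's theorem being invoked with the three properties collected in Theorem~\ref{thm_prop}: in particular, the continuity statement \eqref{incstab} must hold for \emph{all} increasing sequences of subsets, not merely open or compact ones, which is exactly the generality in which it was proved via Lemma~\ref{ComparisonPrinciple}. Once this is verified the conclusion follows at once, with no further estimates needed.
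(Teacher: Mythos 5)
Your proposal is correct and follows essentially the same route as the paper: both verify that $\capacity$ is monotone, stable under increasing sequences of arbitrary sets (Theorem~\ref{thm_prop}\,\eqref{incstab}) and under decreasing sequences of compact sets (Theorem~\ref{thm_prop}\,\eqref{decstab}), and then invoke Choquet's capacitability theorem, using that Borel sets in $\Omega_\infty$ are capacitable. Your extra remarks (explicit monotonicity check and the Borel-implies-analytic point) are fine but do not change the argument.
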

\begin{proof}
	Since the capacity is monotone, stable with respect to increasing sequences of sets (Theorem \ref{thm_prop}, \eqref{incstab}) and stable with respect to decreasing sequences of compact sets (Theorem \ref{thm_prop}, \eqref{decstab}), it is a regular capacity and hence the claim follows from Choquet's capacitability theorem \cite[Theorem 9.3, p. 155]{Choq}.
\end{proof}

\end{document}